\documentclass[,]{informs3}

\OneAndAHalfSpacedXII 

\usepackage{endnotes}
\let\footnote=\endnote
\let\enotesize=\normalsize
\def\notesname{Endnotes}%
\def\makeenmark{\hbox to1.275em{\theenmark.\enskip\hss}}
\def\enoteformat{\rightskip0pt\leftskip0pt\parindent=1.275em
  \leavevmode\llap{\makeenmark}}

\usepackage{algorithm}
\usepackage{algpseudocode}
\usepackage{tikz}
\usepackage[utf8]{inputenc}
\usepackage{amsmath,amssymb,amsfonts}
\usepackage{mathtools}
\usepackage{hyperref}
\usepackage{caption,subcaption}
\usepackage{graphicx}
\usepackage{xcolor}
\usepackage{pgfplots}
\usetikzlibrary {arrows.meta}
\usepackage[dvipsnames]{xcolor}

\newcommand{\brho}{\rho}

\usepackage{natbib}
 \bibpunct[, ]{(}{)}{,}{a}{}{,}%
 \def\bibfont{\small}%
\TheoremsNumberedThrough     
\ECRepeatTheorems

\EquationsNumberedThrough    

\renewcommand{\theARTICLETOP}{}
\begin{document}


\RUNAUTHOR{Author}

\RUNTITLE{Dynamic Traffic Allocation for Revenue Maximization on Creator Economy Platforms}

\TITLE{Dynamic Traffic Allocation for Revenue Maximization on Creator Economy Platforms}

\ARTICLEAUTHORS{%
\AUTHOR{Zhengli Wang}
\AFF{Faculty of Business and Economics, the University of Hong Kong, \EMAIL{wzl1@hku.hk}}

\AUTHOR{Lin (Franklin) Feng}
\AFF{Stanford Graduate School of Business, \EMAIL{linfeng1@stanford.edu}}

\AUTHOR{Zhixi Wan}
\AFF{Faculty of Business and Economics, the University of Hong Kong,  \EMAIL{zhixiwan@hku.hk}}
} 

\ABSTRACT{%
Creator economy platforms face a strategic dilemma: allocating traffic to established stars for immediate ad revenue versus nurturing emerging creators to build a follower base for future monetization. We develop a continuous-time dynamic optimization model to characterize the optimal traffic allocation policy for a platform managing heterogeneous creators with dual revenue streams (advertising and direct follower contributions). We characterize the optimal policy analytically, revealing a ``most-valuable-creator-first" rule driven by a forward-looking activation set. Under Bass diffusion dynamics, this policy exhibits a sophisticated ``conditional reversal" strategy, where the platform temporarily prioritizes lagging creators to capitalize on word-of-mouth effects. Regarding the ecosystem structure, we find the optimal policy acts as a selective gatekeeper. Unlike myopic policies that lead to a harsh ``winner-take-all" market, or naive fairness-driven heuristics that foster inefficient ``indiscriminate growth," the optimal policy imposes a strict capability threshold that screens which creators receive platform traffic. Furthermore, it enforces disciplined growth for successful entrants, capping their follower bases at an optimal ceiling to prevent over-investment. Finally, we demonstrate that simple heuristics can lead to significant revenue losses (up to 25\%) and propose a practical ``follower-growth adjusted" heuristic that achieves near-optimal performance by leveraging observed growth momentum.
}%


\KEYWORDS{Creator Economy, Dynamic Optimization, Viewer Traffic Allocation} 

\maketitle

%


\section{Introduction}

The creator economy, a vibrant digital ecosystem where over 50 million individuals create content for platforms like YouTube, TikTok, and Instagram, has grown into a significant economic force \citep{forbes_2022_creator_economy}. The scale of this activity is immense, with over a billion hours of video consumed daily on YouTube alone and the sector valued at an estimated \$13.8 billion in 2021 \citep{forbes_creator_valuation_2021}. 

At the heart of this ecosystem lies content monetization, which serves as both the primary revenue source for platforms and the key incentive for professional creators. Historically, online advertising revenue, pioneered by platforms like YouTube with its Participatory Video Ads introduced in 2006, has been the dominant monetization mode \citep{forbes_2006_youtube_pva}. In this model, the platforms act as the central intermediary connecting viewers, creators, and advertisers \citep{bhargava2022creator}. Their recommendation algorithms influence content exposure, often disproportionately allocating viewer traffic to a small number of ``star creators", creating a highly skewed ecosystem where the vast majority struggle to earn a living wage. For instance, achieving just \$2000 from YouTube's ad revenue share reportedly requires 1 million views \citep{economist_creator_rules_2021}, making it challenging to cultivate a robust ``middle class" of creators. Because ad revenue requires massive viewership scale to generate a livable income, the traditional model inherently starves emerging niche creators of financial viability.
 
In response to these challenges, new monetization models, collectively termed follower-based monetization, have rapidly emerged. These modes include subscriptions for premium content, direct tips or donations from ``true fans," and influencer marketing deals with brands \citep{forbes_2022_creator_economy}. Unlike ad-revenue, follower-based monetization fundamentally alters platform economics: it introduces the possibility of a creator ``middle class'' capable of sustaining themselves on a much smaller base of highly loyal followers. Platforms are now actively facilitating these new revenue streams, creating a complex dual-revenue environment where traffic allocation has both direct and indirect financial consequences \citep{youtube_vidcon_2018_memberships}.

Despite the growing prominence of this hybrid model, there has been limited academic research exploring its implications for platform strategy. The existing literature primarily focuses on ad-revenue sharing or general content promotion \citep{bhargava2022creator,jain2021compensating}. This leaves a critical gap in understanding how platforms should manage this modern dual-revenue environment. To address this, our paper investigates the following three research questions:

\textbf{Research Question 1:} What is the structure of the optimal dynamic traffic allocation policy for a platform seeking to maximize total revenue from both advertising and follower-based channels? This question addresses the central operational dilemma for a platform manager. This is not merely a theoretical puzzle but a core strategic challenge that the industry's largest players openly acknowledge. In practice, platforms utilize recommender systems—such as TikTok's For You feed—to deliberately distribute traffic to emerging creators \citep{tiktok_foryou_2020}, often prioritizing discovery over immediate monetization from established stars.

This strategic choice underscores the fundamental trade-off: should the algorithm direct a viewer to a proven creator for guaranteed immediate returns, or to an emerging one as an investment in future engagement? In response, we characterize the optimal policy, revealing a ``most-valuable-creator-first" rule where value is defined by a forward-looking activation set. Our analysis highlights dynamic patterns such as a ``conditional reversal" strategy, where it can be optimal to temporarily channel viewer traffic to a lagging creator to exploit accelerating word-of-mouth effects.

\textbf{Research Question 2:} How does the optimal policy shape the platform's long-term market structure? While industry observers hope that dual-revenue environments will naturally foster a diverse ``middle class'', does a platform's revenue-maximizing allocation actually achieve this, or does it reinforce a ``winner-take-all'' dynamic? We find that the optimal policy acts as a ``selective gatekeeper,'' striking a precise balance that simple heuristics miss. While myopic policies result in a harsh ``winner-take-all'' market and naive ``fairness'' heuristics (favoring smaller creators) lead to indiscriminate and inefficient growth, the optimal policy imposes a strict capability threshold. Furthermore, for those who qualify, the policy enforces ``disciplined growth,'' capping their follower bases at an optimal ceiling to prevent over-investment.

\textbf{Research Question 3:} Given the potential complexity of the optimal solution, how much value does it create over simple heuristics, and can a practical, data-driven heuristic be designed to achieve near-optimal performance? This inquiry leads to the search for a robust, data-driven heuristic that captures the core logic of the optimal policy. Our numerical analysis reveals that simple myopic policies can lead to significant revenue loss. Surprisingly, we find that the best data-driven heuristics are those that run counter to the common wisdom of ``betting on the winners." Further, we identify a ``follower-growth adjusted" heuristic that is not only managerially intuitive but also remarkably robust, achieving near-optimal results even with imperfect information.

To derive these findings, we develop a dynamic platform optimization model in continuous time. We model a platform with heterogeneous creators and analyze the problem under two common follower growth dynamics—a linear model and the Bass model. We first analytically characterize the optimal allocation policy, and then use extensive numerical experiments to evaluate its performance against practical heuristics.

To the best of our knowledge, this paper is the first in the operations management literature to analytically study the platform viewer traffic allocation problem within a dual-revenue environment. Methodologically, we contribute to the continuous-time optimal control literature by integrating active traffic allocation, heterogeneous creator capabilities, and non-concave Bass diffusion dynamics into a tractable analytical framework. Managerially, by explicitly modeling both short-run advertising income and long-run follower contributions, our structural results provide pioneering insights into how dynamic traffic allocation fundamentally shapes a platform's long-term market structure.

The remainder of this paper is structured as follows. Section 2 reviews the relevant literature. Section 3 introduces our model setup. Sections 4 and 5 characterize the optimal traffic allocation policies. Section 6 presents our numerical results. Finally, Section 7 concludes and discusses avenues for future research.

\section{Literature Review}

Compared with existing studies, our paper is distinctive in incorporating five features within a single platform optimization framework: (i) \emph{traffic allocation} as the platform’s lever; (ii) \emph{follower-based growth} where allocation has an impact for the future; (iii) \emph{two revenue streams}: short-run advertising and long-run subscription fees; (iv) \emph{heterogeneity} in content creator's capability and growth potential; and (v) an explicit characterization of the platform’s optimal policy structure, which is both interpretable and demonstrates the trade-off between immediate revenue and future growth.

In recent years, there has been growing interest in the academic literature to study online content platforms. \citet{caro2020managing} analyze how creators optimize posting intensity to build and monetize their follower base; \citet{jain2021compensating} study how advertising revenue sharing is shaped by competition among creators;  \citet{mai2023optimizing} investigate pricing and advertising strategies in an online game platform to maximize long-run profit; and \citet{li2026robust} investigates matching algorithms between creators and advertisers. Among this stream of literature, the two papers most relevant to us are \citet{lin2024content} and \citet{ma2025user}. \citet{lin2024content} develop a Promotion-Bass Diffusion Model that captures promotion and diffusion effects in online content adoption. They formulate a discrete-time optimization problem to maximize total adoptions. Different from their formulation, we use a continuous-time framework that accounts for both short-run advertising and long-run subscription fees, and we analytically characterize the explicit structure of the platform’s optimal policy. \citet{ma2025user} analyze an online content platform’s dynamic operational policies by modeling the triangular relationship among platform, creators, and viewers, assuming viewership traffic is shared equally among homogeneous creators. They adopt an optimal control approach to maximize advertising profits. While our study shares a similar context, our formulation is essentially different from theirs: our model allows the platform to allocate traffic across heterogeneous creators and incorporates subscription fees as part of the platform's revenue source.

Methodologically, our work builds on the broad literature of continuous-time stochastic control in Operations Management (OM), which has been applied across diverse domains such as queueing control \citep{lin2023wait}, platform-based resource allocation \citep{liang2023efficient}, healthcare service or clinical trial \citep{hu2025prediction,wang2020adaptive}, and adverse event monitoring \citep{chen2020optimal}. Related research has employed similar frameworks to dynamic pricing and inventory coordination \citep{feng2014dynamic}, capacity management in product launches \citep{shen2014optimal, sunar2019optimal}, the timing of strategic investment decisions \citep{kwon2010invest}, entrepreneurship \citep{wang2022new,wang2026strategies}, Bayesian learning \citep{harrison2015investment,sunar2024optimal}, role of production flexibility in capital management \citep{lai2023interplay}, effort allocation or incentive design \citep{dawande2019optimal,li2024leveling}, and dynamic growth in two-sided markets \citep{lian2021optimal}. Although these studies do not directly address problems in the creator economy, they share similar modeling techniques and incorporate elements closely related to ours, such as Bass-type diffusion processes, the trade-off between short-run and long-run objectives, and the characterization of optimal policy structures. For instance, \citet{hu2022managing} develop a continuous-time control model in which a manufacturer
of self-replicating innovative goods dynamically allocates a limited resource to maximize discounted profit. Their model employs a Bass-type diffusion process for demand and captures the tension between short-term sales revenue and long-term capacity growth, leading to a threshold-type optimal policy. Different from these studies, our model incorporates features such as traffic allocation, follower-based growth and creator heterogeneity. These features are unique to creator economy context and yield insights that are particularly relevant to this emerging application context.  

\section{Model}\label{Sec:Model}

\subsubsection*{Creators and Revenue. }
We consider a content platform that hosts \(N\) heterogeneous creators, indexed by $i\in\mathcal{N}=\{1,\dots,N\}$, over a continuous-time, infinite horizon. Each creator's potential is defined by two unique parameters: (1) \textit{capability} $\brho_i>0$, that captures a creator's effectiveness at converting traffic into both advertising revenue and follower contributions. We assume creators are ordered by capability, such that $\brho_1>\brho_2>\dots>\brho_N$; (2) \textit{follower capacity} $m_i>0$, that represents the maximum potential size of creator $i$'s follower base. The heterogeneity in $m_i$ captures the varying total addressable markets of distinct content niches, which ranges from globally appealing mass entertainment to highly specialized topics.

The primary state variable for each creator $i$ is their number of followers at time $t$, denoted by $\mathcal{S}_i(t)$. The instantaneous revenue rate generated by creator $i$ is a function of the \textit{traffic} they receive, $\mathcal{A}_i(t)$, and their current follower base. It is composed of two parts: (1) \textit{per-exposure advertising revenue} $\brho_i\mathcal{A}_i(t)$ and (2) \textit{per-follower contribution revenue} (e.g., subscriptions or tips) $\brho_ir\mathcal{S}_i(t)$, where $r>0$ is the unit subscription fee. This structure highlights the dual role of a creator's capability $\rho_i$ in monetizing both immediate viewership and their accumulated base of loyal followers.

\subsubsection*{Control and Dynamics. }
The platform's primary control lever is the dynamic allocation of its total viewer traffic, $A>0$. At each instant $t$, the platform decides an allocation $\mathcal{A}(t)=(\mathcal{A}_1(t),\dots,\mathcal{A}_N(t))$, where $\mathcal{A}_i(t)\geq 0$ denotes the traffic assigned to creator $i$, subject to the capacity constraint $\sum_{i=1}^N\mathcal{A}_i(t)=A$.

Traffic allocation is critical because it directly determines the growth trajectory of each creator’s follower base. We model follower dynamics as proportional to the allocated traffic, with evolution governed by the ordinary differential equation (ODE):
\begin{align}
\frac{d\mathcal{S}_i(t)}{dt} = \mathcal{A}_i(t) \mathcal{F}(\mathcal{S}_i(t); \brho_i, m_i),\label{eq:dynamics}
\end{align}
where $\mathcal{F}(\cdot; \brho, m)$ is a general function that represents the evolution of follower growth. We assume that follower growth occurs independently across creators.

\subsubsection*{Platform's Optimization Problem.}
The platform's objective is to determine a dynamic traffic allocation policy that maximizes the total discounted revenue generated by all creators. A policy, $\gamma$, is a function that maps any given state of follower bases, $S(t)$, to a feasible traffic allocation
\begin{align*}
    \gamma(\mathcal{S}(t))\in\left\{(\mathcal{A}_1,\dots,\mathcal{A}_N): \sum_{i=1}^N\mathcal{A}_i=A,\mathcal{A}_i\geq 0\right\}.
\end{align*}

Formally, the platform seeks to find the optimal policy $\gamma^*$ from the set of all admissible policies $\Gamma$ to solve the following continuous-time optimal control problem:
\begin{align}
    \max_{\gamma\in\Gamma}&\; \int_0^{+\infty}e^{-\delta t}\sum_{i=1}^N \brho_i[\mathcal{A}_i(t) + r\mathcal{S}_i(t)]\,dt :=\max_{\gamma\in\Gamma}\Pi \label{eq:GenObj}
\end{align}
subject to the follower growth dynamics \eqref{eq:dynamics} for each creator $i\in\mathcal{N}$ and the initial conditions $\mathcal{S}_i(0)=S_i$. Here, $\delta$ is the discount factor, and $\mathcal{A}_i(t)=\gamma_i(\mathcal{S}(t))$.

\subsubsection*{Follower Growth Models: Linear and Bass.}
To make the problem tractable and derive analytical insights, we specify the general follower growth function $\mathcal{F}(\cdot;\rho,m)$ using two common functional forms from the literature.
\begin{itemize}
    \item \textit{Linear Model:} The first model assumes that the follower conversion rate decreases linearly as a creator's follower base approaches its capacity. The growth function is
    \begin{align}
        \mathcal{F}_L(\mathcal{S}_i(t);\brho_i,m_i)=p\brho_i\left(1-\frac{\mathcal{S}_i(t)}{m_i}\right).\label{Eq:Ln_Evolution}
    \end{align}
    Here, $p>0$ is an exogenous conversion rate. This model represents that the growth rate is proportional to the ``remaining'' follower-base percentage (i.e, $1-\frac{\mathcal{S}_i(t)}{m_i}$) and, thus, the growth is fastest at the beginning.
    
    \item \textit{Bass Model:} The second model incorporates a word-of-mouth effect, where existing followers help attract new ones. The growth function is:
    \begin{align}
    \mathcal{F}_B(\mathcal{S}_i(t); \brho_i, m_i) :=  \left (p\brho_i + q \cdot \frac{\mathcal{S}_i(t)}{m_i} \right) \left( 1 - \frac{\mathcal{S}_i(t)}{m_i} \right).\label{Eq:Bs_Evolution}
    \end{align}
    This formulation follows the Bass diffusion model \citep{bass1969new}, which has since been widely been adopted in operations management \citep{shen2014optimal, ma2025user, hu2022managing, lin2024content}. In addition to the direct conversion rate $p$, the model incorporates an internal influence term, with $q>0$ capturing the strength of word-of-mouth effects. This yields a non-concave, S-shaped growth trajectory: follower growth accelerates during intermediate stages before tapering off as saturation approaches. Note that the linear model is a special case of the Bass model, obtained by setting $q=0$.
\end{itemize}

\section{Optimal Policy: Two Creators}\label{Sec:Opt_policy_2Crtr}
In this section, we characterize the optimal traffic allocation policy in a simplified setting with two heterogeneous creators. The analysis reveals a powerful and intuitive structure: the platform follows a ``most valuable creator first" rule, allocating all traffic to the creator who offers the greatest marginal return at each moment. Central to this policy is a forward-looking \textit{marginal value index}, which accounts for both the immediate advertising revenue from an extra unit of traffic and the discounted future value of the followers it generates. 

We first derive this policy under the linear growth model (Section~\ref{Subsec:Opt_policy_2Crtr_Ln}), and then extend the analysis to the Bass model where word-of-mouth effects enrich the dynamics (Section~\ref{Subsec:Opt_policy_2Crtr_Bs}). Finally, we examine Research Question 2 by characterizing the conditions under which the creators can receive positive traffic, as well as their long-run follower bases (Section~\ref{Subsec:2Crtr_Long_Base}).

\subsection{The Linear Model}\label{Subsec:Opt_policy_2Crtr_Ln}
In Section~\ref{Subsubsec:Opt_policy_2Crtr_Ln_op}, we derive the structure of the optimal allocation policy under the linear model by partitioning the state space and characterize the associated optimal control evolution. Section~\ref{Subsubsec:Opt_policy_2Crtr_Ln_it} provides an illustration of the optimal policy's structure and corresponding state evolution.

\subsubsection{The Optimal Policy.}\label{Subsubsec:Opt_policy_2Crtr_Ln_op}
The platform's objective is to maximize its total revenue $\Pi$ in \eqref{eq:GenObj}, subject to the linear growth dynamics. The resulting optimal policy is characterized in the following theorem.

\begin{theorem}\label{Thm:Opt_policy_2Crtr_Ln}
    Under the linear follower growth with $2$ creators, let the initial number of followers for creator $i=1$, $2$ be $S_1$, $S_2$ respectively. Then, there exist functions $g,h:\mathbb{R}\to\mathbb{R}$ such that the optimal traffic allocation policy $\gamma^*$ takes the form
    \begin{align}
        \gamma^*(\mathcal{S}_1,\mathcal{S}_2)&=(\mathcal{A}_1^*,\mathcal{A}_2^*)\nonumber\\
        &=
        \begin{cases}
            (A,0)&\text{if }\mathcal{S}_2>g(\mathcal{S}_1),\\
            (0,A)&\text{if }\mathcal{S}_2<g(\mathcal{S}_1),\\
            (h(\mathcal{S}_1),A-h(\mathcal{S}_1))&\text{if }\mathcal{S}_2=g(\mathcal{S}_1).
        \end{cases}\nonumber
    \end{align}
    In particular, the function $g$ is affine with the explicit form
    \begin{align}
        g(\mathcal{S}_1)=\frac{m_2}{m_1}\cdot\frac{\brho_1^2}{\brho_2^2}\mathcal{S}_1-\frac{m_2}{\brho_2^2}\left[\frac{\delta}{pr}(\brho_1-\brho_2)+(\brho_1^2-\brho_2^2)\right],\nonumber
    \end{align}
    and the function $h$ is given by
    \begin{align}
        h(\mathcal{S}_1)=A\cdot\frac{m_1\rho_2^3(m_2-g(\mathcal{S}_1))}{m_1\rho_2^3(m_2-g(\mathcal{S}_1))+m_2\rho_1^3(m_1-\mathcal{S}_1)}.\nonumber
    \end{align}
\end{theorem}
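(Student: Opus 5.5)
The plan is to treat the problem as a deterministic infinite-horizon optimal control problem and use Pontryagin's maximum principle. Because the linear model is not obviously concave in $\mathcal{S}_i$, I will first reparametrize to get a concave problem, so that the necessary conditions are also sufficient. I will then read off the bang-bang and singular structure from the switching function.

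\emph{Reparametrization.} Set $x_i=-\ln(1-\mathcal{S}_i/m_i)$. Under \eqref{Eq:Ln_Evolution} the dynamics become linear, $\dot x_i=p\brho_i\mathcal{A}_i/m_i$. The running reward becomes $\brho_i\mathcal{A}_i+r\brho_i m_i(1-e^{-x_i})$, which is linear in the control and concave in the state. The feasible control set is a simplex. Hence the problem in $(x_1,x_2)$ is a concave control problem with linear dynamics, and an Arrow/Mangasarian-type sufficiency argument applies: any admissible trajectory satisfying the maximum principle together with the transversality condition $\lim_{t\to\infty}e^{-\delta t}\lambda_i(t)x_i(t)=0$ is optimal. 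Transversality should be easy here, since $\mathcal{S}_i$ is bounded and the costates will be bounded.

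\emph{Maximum principle and switching.} In the original variables the current-value Hamiltonian is
$H=\sum_i\brho_i(\mathcal{A}_i+r\mathcal{S}_i)+\lambda_i\mathcal{A}_ip\brho_i(1-\mathcal{S}_i/m_i)$, so the marginal value of traffic to creator $i$ is $\sigma_i=\brho_i+\lambda_ip\brho_i(1-\mathcal{S}_i/m_i)$. The maximizer puts all traffic on $\arg\max_i\sigma_i$, and splits it only on a singular arc where $\sigma_1=\sigma_2$. For a creator receiving no traffic, the adjoint equation $\dot\lambda_i=\delta\lambda_i-r\brho_i+\lambda_i\mathcal{A}_ip\brho_i/m_i$ has the bounded solution $\lambda_i=r\brho_i/\delta$, which is the perpetuity value of one follower.

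\emph{Candidate policy and verification.} Substituting $\lambda_i=r\brho_i/\delta$ yields the index $I_i(\mathcal{S}_i)=\brho_i+\tfrac{pr}{\delta}\brho_i^2(1-\mathcal{S}_i/m_i)$. Setting $I_1=I_2$ and solving for $\mathcal{S}_2$ gives exactly the affine $g$ in the statement, and the sign of $I_1-I_2$ gives the two bang-bang regions. On the boundary $\mathcal{S}_2=g(\mathcal{S}_1)$, I will determine the split by requiring the curve to be invariant, i.e.\ $\frac{d}{dt}(I_1-I_2)=0$. This is one linear equation in $\mathcal{A}_1$ given $\mathcal{A}_1+\mathcal{A}_2=A$, and solving it should reproduce $h$ (using $\mathcal{S}_2=g(\mathcal{S}_1)$ to eliminate $\mathcal{S}_2$). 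It also yields $h\in[0,A]$ automatically, since both terms are nonnegative.

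\emph{Main obstacle.} The delicate step is justifying that the costates really equal $r\brho_i/\delta$ along the candidate trajectory. Once creator $i$ receives traffic, the adjoint equation picks up the term $\lambda_i\mathcal{A}_ip\brho_i/m_i$, because an extra follower now also lowers future growth. I would handle this by constructing the value function of the candidate policy explicitly in each region and verifying the HJB equation directly, using the concavity from the reparametrization for uniqueness. The first thing to check is that, on the region where creator $i$ is served, the reduction in future growth is exactly offset in the comparison $\sigma_1$ versus $\sigma_2$. I expect this to hold because the trajectory moves monotonically toward $g$ and then stays on it. If the exact offset fails, I would fall back on computing $\partial V/\partial\mathcal{S}_i$ along each region and checking $\max_{\mathcal{A}}$ in the HJB case by case.
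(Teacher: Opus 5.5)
Your overall architecture matches the paper's: PMP plus sufficiency, bang-bang regions from the sign of $\sigma_1-\sigma_2$, and $h$ from invariance of the line. Your logarithmic reparametrization also gives a cleaner sufficiency step than the paper's. The paper appeals to linearity of $\mathcal{H}$ in $\mathcal{S}$, even though the terms $\lambda_i\mathcal{A}_i\mathcal{S}_i$ rule out joint concavity; in $x$ the problem is jointly concave with linear dynamics.

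There is, however, a genuine gap at exactly the step you flag. The substitution $\lambda_i=r\rho_i/\delta$ is valid only for a creator that receives \emph{no traffic ever again}. Along the candidate path, neither costate generally has this value. When the prioritization phase ends on the line, the currently idle creator is served afterwards, so its costate is already strictly below $r\rho_i/\delta$ while it is idle. Consequently $I_i(\mathcal{S}_i(t))$ is not the switching coefficient $\sigma_i(t)$, and the candidate has been guessed but not verified. Your fallback of building $V$ region by region and checking HJB is left unexecuted. It would also be laborious: the balanced-phase dynamics make $V$ only implicit, and you would need differentiability of $V$ across the switching line.

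The missing idea is a control-free representation of the switching coefficients, and your own coordinates give it directly. Write $\mu_i$ for the costate of $x_i$. Since $\dot x_i=p\rho_i\mathcal{A}_i/m_i$ does not depend on $x_i$, the adjoint equation is $\dot\mu_i=\delta\mu_i-r\rho_i(m_i-\mathcal{S}_i)$. Its bounded solution is $\mu_i(t)=r\rho_i\int_t^\infty e^{-\delta(s-t)}(m_i-\mathcal{S}_i(s))\,ds$, hence
\[
\sigma_i(t)=\rho_i+\frac{p\rho_i}{m_i}\mu_i(t)=\delta\int_t^\infty e^{-\delta(s-t)}I_i(\mathcal{S}_i(s))\,ds,
\qquad
\sigma_1(t)-\sigma_2(t)=pr\frac{\rho_2^2}{m_2}\int_t^\infty e^{-\delta(s-t)}\bigl(\mathcal{S}_2(s)-g(\mathcal{S}_1(s))\bigr)\,ds .
\]
This is precisely the paper's key identity. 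The paper obtains $\dot E-\delta E=-pr\frac{\rho_2^2}{m_2}(\mathcal{S}_2-g(\mathcal{S}_1))$ in the original coordinates, where the $\mathcal{A}_i$-terms of the adjoint equation cancel exactly.

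Along the candidate path, $\mathcal{S}_2-g(\mathcal{S}_1)$ is strictly monotone under either bang-bang allocation. It therefore keeps its initial sign until the line is reached, and is identically zero afterwards. So $\sigma_1-\sigma_2$ has the correct strict sign off the line and vanishes on it. This is your ``moves monotonically toward $g$ and then stays on it'' intuition made rigorous. Finally, $0\le\mu_i\le r\rho_i m_i/\delta$ and the at-most-linear growth of $x_i$ give transversality. Your concave reparametrization then finishes the proof with no value-function computation.

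One caution: solving your invariance condition correctly yields
\[
h(\mathcal{S}_1)=A\cdot\frac{m_1^2\rho_2^3(m_2-g(\mathcal{S}_1))}{m_1^2\rho_2^3(m_2-g(\mathcal{S}_1))+m_2^2\rho_1^3(m_1-\mathcal{S}_1)} ,
\]
not the displayed formula. The paper's derivation is right up to its penultimate line, but the last simplification drops a factor $m_2/m_1$ in the second denominator term. The stated $h$ therefore preserves the line only when $m_1=m_2$, so do not try to force your computation to match it.
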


The economic intuition behind this affine partition, $\mathcal{S}_2=g(\mathcal{S}_1)$, is best understood through the marginal value index of each creator. This line represents an indifference curve where the marginal values of both creators are exactly equal. To formalize this, we define the time-dependent marginal value function for each creator $i$ as
\begin{equation}
\Psi_i(t) := \brho_i\left[1 + \frac{r}{\delta}\mathcal{F}_L(\mathcal{S}_i(t);\rho_i,m_i)\right], \quad \psi_i = \Psi_i(0).\label{eq:psi_i}
\end{equation}
This index $\Psi_i(t)$ represents the marginal long-term benefit of allocating an additional unit of traffic to creator $i$ at time $t$. The term $\mathcal{F}_L$ is the instantaneous follower growth rate, while $r/\delta$ converts the future revenue from these new followers into its present value. By \eqref{Eq:Bs_Evolution}, we note that $\Psi_i(t)$ decreases in $\mathcal{S}_i(t)$. The following proposition establishes the formal link between this economic concept and the policy structure in Theorem \ref{Thm:Opt_policy_2Crtr_Ln}.

\begin{proposition}\label{Prop:Opt_policy_2Crtr_Ln_Psi_Eq_gS1}
Under the linear follower growth model, for any time $t \geq 0$, the condition $\mathcal{S}_2(t) = g(\mathcal{S}_1(t))$ is equivalent to $\Psi_1(t) = \Psi_2(t)$, where $\Psi_i(t)$ is defined as in \eqref{eq:psi_i}. That is,
\begin{equation}
\mathcal{S}_2(t) = g(\mathcal{S}_1(t)) \quad \text{if and only if} \quad \Psi_1(t) = \Psi_2(t).\nonumber
\end{equation}
Moreover,
\begin{align}
    &\mathcal{S}_2(t) > g(\mathcal{S}_1(t)) \quad \text{if and only if} \quad \Psi_1(t) > \Psi_2(t);\nonumber\\
    &\mathcal{S}_2(t) < g(\mathcal{S}_1(t)) \quad \text{if and only if} \quad \Psi_1(t) < \Psi_2(t).\nonumber
\end{align}
\end{proposition}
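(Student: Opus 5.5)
The proof is a direct algebraic computation: I will write $\Psi_1(t)-\Psi_2(t)$ as an explicit affine function of $(\mathcal{S}_1(t),\mathcal{S}_2(t))$ and identify its zero set with the line $\mathcal{S}_2=g(\mathcal{S}_1)$ from Theorem~\ref{Thm:Opt_policy_2Crtr_Ln}. No dynamics or optimality arguments are needed. The statement is pointwise in $t$, so I fix $t$ and write $\mathcal{S}_i=\mathcal{S}_i(t)$.

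First, substitute the linear growth function \eqref{Eq:Ln_Evolution} into \eqref{eq:psi_i}:
\begin{equation*}
\Psi_i=\brho_i+\frac{rp}{\delta}\,\brho_i^2\left(1-\frac{\mathcal{S}_i}{m_i}\right).
\end{equation*}
Then form the difference
\begin{equation*}
D(\mathcal{S}_1,\mathcal{S}_2):=\Psi_1-\Psi_2=(\brho_1-\brho_2)+\frac{rp}{\delta}\left[(\brho_1^2-\brho_2^2)-\frac{\brho_1^2}{m_1}\mathcal{S}_1+\frac{\brho_2^2}{m_2}\mathcal{S}_2\right].
\end{equation*}
The key observation is that $D$ is affine and strictly increasing in $\mathcal{S}_2$. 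Its coefficient on $\mathcal{S}_2$ is $rp\brho_2^2/(\delta m_2)>0$, since all parameters are positive.

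Next, I solve $D=0$ for $\mathcal{S}_2$. Multiplying through by $\delta m_2/(rp\brho_2^2)$ and rearranging gives
\begin{equation*}
\mathcal{S}_2=\frac{m_2}{m_1}\cdot\frac{\brho_1^2}{\brho_2^2}\mathcal{S}_1-\frac{m_2}{\brho_2^2}\left[\frac{\delta}{pr}(\brho_1-\brho_2)+(\brho_1^2-\brho_2^2)\right],
\end{equation*}
which is exactly $g(\mathcal{S}_1)$. This proves the equivalence $\mathcal{S}_2=g(\mathcal{S}_1)\iff\Psi_1=\Psi_2$.

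For the two strict inequalities, I use the form
\begin{equation*}
D(\mathcal{S}_1,\mathcal{S}_2)=\frac{rp\brho_2^2}{\delta m_2}\bigl(\mathcal{S}_2-g(\mathcal{S}_1)\bigr).
\end{equation*}
This identity holds because $D$ is affine in $\mathcal{S}_2$ with the stated slope and vanishes at $\mathcal{S}_2=g(\mathcal{S}_1)$. Since the prefactor is strictly positive, $\operatorname{sign}(\Psi_1-\Psi_2)=\operatorname{sign}(\mathcal{S}_2-g(\mathcal{S}_1))$, which yields both strict equivalences.

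There is no real obstacle here. The only care needed is keeping track of the constant term, so that the intercept matches the bracket $\frac{\delta}{pr}(\brho_1-\brho_2)+(\brho_1^2-\brho_2^2)$ in $g$, and checking the sign of the slope so the inequalities point in the stated direction.
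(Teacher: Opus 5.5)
Your proposal is correct and follows the paper's proof. Both arguments write $\Psi_1-\Psi_2$ as an affine expression in $(\mathcal{S}_1,\mathcal{S}_2)$ and factor it as $\frac{pr}{\delta}\cdot\frac{\rho_2^2}{m_2}\bigl(\mathcal{S}_2-g(\mathcal{S}_1)\bigr)$, then read all three equivalences off the strictly positive prefactor.
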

Therefore, the optimal policy is a simple and intuitive rule: allocate all traffic to the creator with the higher marginal value $\Psi_i(t)$. If the values are equal, the platform allocates traffic to maintain this equality. This leads to a two-phase evolution of the optimal control trajectory:
\begin{enumerate}
    \item \textbf{(Prioritization Phase)} Initially, all traffic is allocated to the creator with the higher initial marginal value (say, creator $i_1$ with $\psi_{i_1}>\psi_{i_2}$) until their marginal value $\Psi_{i_1}(t)$ decreases to match the other's.

    \item \textbf{(Balanced Phase)} Once the marginal values are equalized, traffic is allocated to both creators in a way that maintains $\Psi_1(t)=\Psi_2(t)$ indefinitely. The system's state then evolves along the indifference line $\mathcal{S}_2=g(\mathcal{S}_1)$.
\end{enumerate}

\subsubsection{Illustration of the Optimal Policy.}\label{Subsubsec:Opt_policy_2Crtr_Ln_it}
In this section, we present an intuitive visualization of the optimal policy $\gamma^*$ and the resulting system trajectories. Figure~\ref{Fig:Opt_policy_2Crtr_Ln_illust} depicts the policy in the $(\mathcal{S}_1,\mathcal{S}_2)$ state space, which is partitioned by the indifference line $\mathcal{S}_2=g(\mathcal{S}_1)$. The system’s evolution is determined by its initial position relative to this line.

A trajectory starting at point $P_1$, above the line $\mathcal{S}_2=g(\mathcal{S}_1)$ corresponds to a state where creator 1 has the higher marginal value $(\psi_1>\psi_2)$. The platform allocates all traffic to creator 1, causing the state to move horizontally to the right until it reaches the indifference line.

Conversely, a trajectory starting at point $P_3$ below the line begins with $\psi_2>\psi_1$. All traffic is given to creator 2, causing the state to move vertically until it hits the line.

Once a trajectory reaches the indifference line (as from $P_1$ or $P_3$), or if it starts on the line (point $P_2$), the system enters the Balanced Phase. The platform then applies a continuous allocation that keeps the state on this line, with both follower bases growing simultaneously.

In summary, the indifference line $\mathcal{S}_2=g(\mathcal{S}_1)$ captures the essence of the optimal policy: it delineates the regions of exclusive traffic allocation and, once the marginal values $\Psi_1(t)$ and $\Psi_2(t)$ are equalized, it serves as the invariant path along which the system continues to evolve.

\begin{figure}[h]
\centering
\includegraphics[scale=0.5]{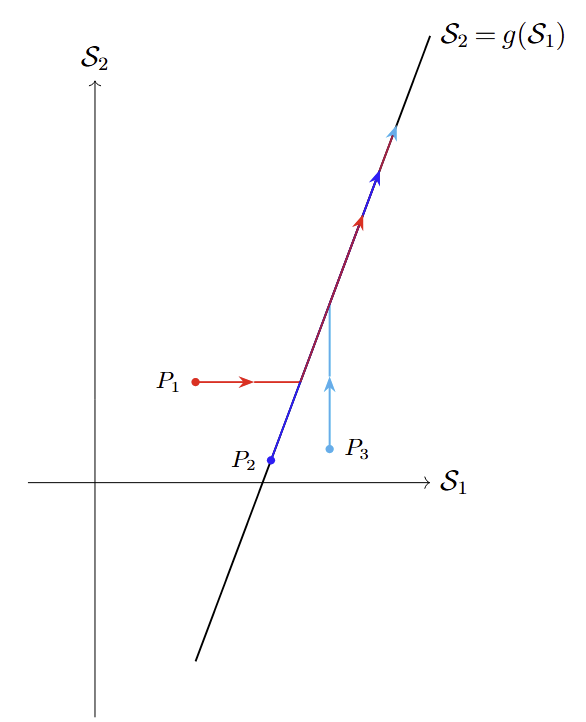}

\caption{The illustration of the optimal policy in $(\mathcal{S}_1,\mathcal{S}_2)$ state space with example trajectories shown, where $\brho_{1}=2, \brho_{2}=1.5, m_{1}=10, m_{2}=15, r=2, p=1, \delta=1$}
\label{Fig:Opt_policy_2Crtr_Ln_illust}
\end{figure}

\subsection{The Bass Model}\label{Subsec:Opt_policy_2Crtr_Bs}
We now extend the analysis to the Bass model $\mathcal{F}_B$, which incorporates the effect of word-of-mouth follower growth. The platform’s guiding principle of prioritizing the most valuable creator still applies, but the nonlinear dynamics give rise to a richer policy structure and deeper strategic insights. The optimal policy is derived in Section~\ref{Subsubsec:Opt_policy_2Crtr_Bs_op}, with the associated control evolution characterized, and the induced state trajectories illustrated in Section~\ref{Subsubsec:Opt_policy_2Crtr_Bs_it}.

\subsubsection{The Optimal Policy.}\label{Subsubsec:Opt_policy_2Crtr_Bs_op}
Under Bass growth, the optimal policy remains a bang-bang structure except on a boundary where interior sharing can be optimal. The policy is therefore described by state-dependent switching curves summarized in Theorem~\ref{Thm:Opt_policy_2Crtr_Bs} below.

\begin{theorem}\label{Thm:Opt_policy_2Crtr_Bs}
    Under the Bass follower growth with $2$ creators, let the initial number of followers for creator $i=1$, $2$ be $S_1$, $S_2$ respectively. Then, there exist functions $g_1, g_2, \bar{h}:\mathbb{R}\to\mathbb{R}$ such that the optimal traffic allocation policy $\gamma^*(\mathcal{S}_1,\mathcal{S}_2)=(\mathcal{A}_1^*,\mathcal{A}_2^*)$ takes the form
    \begin{align}
        \gamma^*=
        \begin{cases}
            (A,0)&\text{if }g_2(\mathcal{S}_2)\leq \mathcal{S}_1<g_1(\mathcal{S}_2),\\
            (0,A)&\text{if }\mathcal{S}_1<g_2(\mathcal{S}_2)\text{ or }\mathcal{S}_1>g_1(\mathcal{S}_2)\\
            &\text{or }\mathcal{S}_1=g_1(\mathcal{S}_2),\mathcal{S}_2\leq m_2(q-p\brho_2)/(2q),\\
            (\bar{h}(\mathcal{S}_2), A-\bar{h}(\mathcal{S}_2))&\text{if }\mathcal{S}_1=g_1(\mathcal{S}_2),
            \mathcal{S}_2> m_2(q-p\brho_2)/(2q).\nonumber
        \end{cases}
    \end{align}
    In particular, the function $\bar{h}$ admits the following closed-form expression, where $\mathcal{F}_{B,i}(\mathcal{S}_i):=\mathcal{F}_B(\mathcal{S}_i;\rho_i,m_i)$,
    \begin{align}
        \bar{h}(\mathcal{S}_2)&=A\cdot\Bigg[\frac{\frac{\rho_2}{m_2}\left(q-p\rho_2-\frac{2q}{m_2}\mathcal{S}_2\right)\mathcal{F}_{B,2}(\mathcal{S}_2)}{\frac{\rho_1}{m_1}\left(q-p\rho_1-\frac{2q}{m_1}g_1(\mathcal{S}_2)\right)\mathcal{F}_{B,1}(g_1(\mathcal{S}_2))+ \frac{\rho_2}{m_2}\left(q-p\rho_2-\frac{2q}{m_2}\mathcal{S}_2\right)\mathcal{F}_{B,2}(\mathcal{S}_2)}\Bigg].\nonumber
    \end{align}
\end{theorem}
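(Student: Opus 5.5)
We will proceed by Pontryagin's maximum principle, as we expect was done for Theorem~\ref{Thm:Opt_policy_2Crtr_Ln}. Since $\sum_i\mathcal{A}_i=A$, we substitute $\mathcal{A}_2=A-\mathcal{A}_1$ and work with the current-value Hamiltonian
\begin{align*}
H=\sum_{i=1}^2 \brho_i(\mathcal{A}_i+r\mathcal{S}_i)+\sum_{i=1}^2\lambda_i\mathcal{A}_i\mathcal{F}_{B,i}(\mathcal{S}_i).
\end{align*}
The costates satisfy $\dot\lambda_i=\delta\lambda_i-\brho_i r-\lambda_i\mathcal{A}_i\mathcal{F}_{B,i}'(\mathcal{S}_i)$. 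Because $H$ is linear in the control, the optimal allocation is bang-bang, with the sign fixed by the switching function $\Phi=\Lambda_1-\Lambda_2$, where $\Lambda_i:=\brho_i+\lambda_i\mathcal{F}_{B,i}(\mathcal{S}_i)$ is the Bass analogue of the index $\Psi_i$. Along an arc where creator $i$ receives no traffic, its state is frozen and its costate has the stationary value $\lambda_i=\brho_i r/\delta$. This suggests using the state-feedback index $\Psi_i=\brho_i[1+\frac r\delta\mathcal{F}_{B,i}(\mathcal{S}_i)]$, and we will aim to show that it plays the same role as in Proposition~\ref{Prop:Opt_policy_2Crtr_Ln_Psi_Eq_gS1}.

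The key structural difference from the linear case is that $\mathcal{F}_{B,i}$ is a concave quadratic. Its peak is at $\mathcal{S}_i=m_i(q-p\brho_i)/(2q)$, so $\Psi_i$ increases in $\mathcal{S}_i$ below this point and decreases above it. The indifference set $\{\Psi_1=\Psi_2\}$, solved for $\mathcal{S}_1$, therefore has two branches $g_1(\mathcal{S}_2)\ge g_2(\mathcal{S}_2)$, one on each side of creator 1's peak. Between the branches $\Psi_1>\Psi_2$, so traffic goes to creator 1. Outside them creator 2 dominates.

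\textbf{Region analysis.} On the lower branch $g_2$, giving traffic to creator 1 pushes $\mathcal{S}_1$ up into the interior, where $\Psi_1$ keeps rising. So $g_2$ is a transition boundary and not a singular arc, which is why the $(A,0)$ region includes $\mathcal{S}_1=g_2$. On the upper branch $g_1$ we must examine a singular arc, imposing $\dot\Phi=0$ along $\Psi_1=\Psi_2$:
\begin{align*}
\mathcal{A}_1\tfrac{\brho_1}{m_1}\bigl(q-p\brho_1-\tfrac{2q}{m_1}\mathcal{S}_1\bigr)\mathcal{F}_{B,1}=\mathcal{A}_2\tfrac{\brho_2}{m_2}\bigl(q-p\brho_2-\tfrac{2q}{m_2}\mathcal{S}_2\bigr)\mathcal{F}_{B,2}.
\end{align*}
Here we use $\frac{d}{dt}\mathcal{F}_{B,i}=\mathcal{F}_{B,i}'\mathcal{A}_i\mathcal{F}_{B,i}$ with $\mathcal{F}_{B,i}'=\frac{1}{m_i}(q-p\brho_i-\frac{2q}{m_i}\mathcal{S}_i)$. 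Solving with $\mathcal{A}_1+\mathcal{A}_2=A$ gives exactly $\bar h$. On $g_1$ creator 1 is past its peak, so the left coefficient is negative. A nonnegative interior solution then requires the right coefficient to be negative too, that is, $\mathcal{S}_2>m_2(q-p\brho_2)/(2q)$: both creators are past their peaks and both indices decrease under traffic, as in the linear case. When $\mathcal{S}_2$ is at or below its peak, feeding creator 2 raises $\Psi_2$ above $\Psi_1$ and the state leaves the curve. This gives the $(0,A)$ assignment on that part of $g_1$.

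\textbf{Optimality.} Next we would verify that the Pontryagin candidate is actually optimal, since the problem is non-concave and the maximum principle alone is not sufficient. I would construct the candidate value function $V$ explicitly by integrating along the trajectories each region generates: horizontal or vertical motion under exclusive allocation, then motion along $g_1$. I would then check the HJB equation
\begin{align*}
\delta V=\max_{\mathcal{A}}\Bigl\{\sum_i\brho_i(\mathcal{A}_i+r\mathcal{S}_i)+\mathcal{A}_i V_{\mathcal{S}_i}\mathcal{F}_{B,i}\Bigr\},
\end{align*}
in the viscosity sense across the switching curves. The main obstacle is the claim that the myopic-looking stationary costate $\lambda_i=\brho_i r/\delta$ is correct: under Bass dynamics, current traffic raises future growth rates (word of mouth), so the costate need not be stationary on arcs where the creator is still growing below its peak. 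I would first try to show that any trajectory reaches its terminal configuration (a saturated or singular state) in finite time, so that costates can be computed backward from the stationary values. Failing that, I would fall back on a direct exchange argument: swapping an infinitesimal $\epsilon$ of traffic between creators over a short interval changes $\Pi$ by $\epsilon(\Psi_1-\Psi_2)$ to first order plus higher-order terms controlled by $\mathcal{F}_{B,i}'$.
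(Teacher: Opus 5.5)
There is a genuine gap. In the region analysis you take $g_1,g_2$ to be the two branches of the static indifference set $\{\Psi_1=\Psi_2\}$. Implicitly, you assume the sign of the switching function $\Lambda_1-\Lambda_2$ equals the sign of the \emph{current} index gap. That is exactly what fails under Bass growth.

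The identity you are missing is the paper's key lemma. For any control, the $\mathcal{A}_i\mathcal{F}_{B,i}'$ terms cancel in $\dot\Lambda_i$, which gives $\dot\Lambda_i=\delta\Lambda_i-\delta\Psi_i$. With transversality this yields
\[
\Lambda_1(t)-\Lambda_2(t)=\delta\int_t^\infty e^{\delta(t-s)}\bigl[\Psi_1(s)-\Psi_2(s)\bigr]\,ds,
\]
so the switching function is a discounted average of the \emph{future} gap along the trajectory, not the present one. Two of your supporting claims fail for the same reason. On an idle arc, $\lambda_i=\rho_i r/\delta+Ce^{\delta t}$ with $C\neq 0$ whenever creator $i$ is served later. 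And the first-order gain of your $\epsilon$-exchange is $\epsilon(\Lambda_1-\Lambda_2)$, not $\epsilon(\Psi_1-\Psi_2)$.

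Your candidate actually violates the maximum condition. Write $\tilde S_i:=m_i(q-p\rho_i)/(2q)$ and start with $\mathcal{S}_2<\tilde S_2$ and $\mathcal{S}_1>\tilde S_1$ slightly below the upper indifference branch.
\begin{itemize}
\item You give creator 1 all traffic until $\Psi_1=\Psi_2$ at some time $t_0$.
\item You then reverse to creator 2. During this phase $\Psi_2>\Psi_1$ strictly, until the two re-equalize on creator 2's decreasing branch.
\item After that the gap is identically zero.
\end{itemize}
The integral formula then gives $\Lambda_1(t_0)<\Lambda_2(t_0)$. By continuity the switching function is negative just before $t_0$, where you allocate $(A,0)$. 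Shifting traffic to creator 2 slightly earlier is therefore a strict first-order improvement, and no HJB or exchange argument can certify this policy.

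The missing construction is to define the switching curves by the forward-looking condition $\Lambda_1=\Lambda_2$, evaluated along the candidate continuation.
\begin{itemize}
\item For $\mathcal{S}_2\ge\tilde S_2$ this reproduces your upper branch.
\item For $\mathcal{S}_2=s_2<\tilde S_2$, the paper's $g_1(s_2)$ is the $s_1\ge\tilde S_1$ solving $\int_0^{T}e^{-\delta t}\bigl[\Psi_1(s_1)-\Psi_2(\Xi_2(t;s_2))\bigr]\,dt=0$. Here $\Xi_2$ is creator 2's full-traffic flow, and $T$ is its return time to level $\Psi_1(s_1)$ on the decreasing branch. Any solution has $\Psi_1>\Psi_2$, so it lies at a strictly smaller $\mathcal{S}_1$ than the indifference branch; this gap is the positive activation threshold.
\item Symmetrically, $g_2$ coincides with your lower branch only for $\mathcal{S}_2\le\tilde S_2$. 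For $\mathcal{S}_2>\tilde S_2$ it solves the analogous equation along creator 1's full-traffic path up to the upper branch, and satisfies $\Psi_1<\Psi_2$. This is a reversal toward an accelerating creator 1, which your analysis misses entirely.
\end{itemize}

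Your singular-arc computation of $\bar h$, and its admissibility because both creators are past their peaks, are correct and match the paper. Once the curves are right, the sign of $\Lambda_1-\Lambda_2$ on each region follows from the integral formula. Its crossing direction on the curves follows from $\dot\Lambda_1-\dot\Lambda_2=-\delta(\Psi_1-\Psi_2)$ at zeros. The paper then concludes by invoking costate positivity (from transversality) and concavity of $\mathcal{H}$ in $\mathcal{S}$, rather than an HJB verification.
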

Theorem~\ref{Thm:Opt_policy_2Crtr_Bs} partitions the state space using two (generally implicit) switching curves $\mathcal{S}_1=g_1(\mathcal{S}_2)$ and $\mathcal{S}_1=g_2(\mathcal{S}_2)$. Away from these curves, the optimal policy allocates all traffic to either creator 1 or creator 2. On the upper switching boundary $\mathcal{S}_1=g_1(\mathcal{S}_2)$, the platform follows a threshold rule in $\mathcal{S}_2$: it allocates all traffic to creator 2 when $\mathcal{S}_2\leq m_2(q-p\rho_2)/(2q)$, and otherwise splits traffic according to $\bar{h}(\mathcal{S}_2)$ once $\mathcal{S}_2>m_2(q-p\rho_2)/(2q)$.

To interpret these switching curves, it is useful to again introduce a marginal value index similar to linear growth case. Under Bass growth, the switching boundaries are no longer determined solely by a simple comparison of marginal values, as in the linear case. Nevertheless, the marginal value index remains a key component for understanding the structure of the optimal trajectory, and in particular the evolution within the balanced regime. We therefore define the marginal value index $\Phi_i(t)$, adapted to Bass growth dynamics, as follows:
\begin{equation}
\Phi_i(t) := \brho_i\left[1 + \frac{r}{\delta}\mathcal{F}_B(\mathcal{S}_i(t);\rho_i,m_i)\right], \quad \phi_i = \Phi_i(0).\label{eq:phi_i}
\end{equation}

However, unlike its linear counterpart, this marginal value is not always decreasing in the number of followers. The word-of-mouth effect introduces non-monotonicity.
\begin{lemma}\label{lem:2Crtr_bass_partial_Sit}
Under the Bass growth model, the partial derivative of the marginal value function $\Phi_i(t)$ defined in \eqref{eq:phi_i} with respect to $\mathcal{S}_i(t)$ satisfies
\begin{align}
    \frac{\partial \Phi_i(t)}{\partial \mathcal{S}_i(t)}=
    \begin{cases}
        \geq 0,&\text{if }\brho_i\leq\frac{q}{p}\text{ and }\mathcal{S}_i(t)\leq\frac{m_i(q-p\brho_i)}{2q},\\
        <0,&\text{otherwise.}
    \end{cases}\nonumber
\end{align}
Hence, $\Phi_i(t)$ increases only when both the creator capability and the follower base are at low levels. Once $\mathcal{S}_i(t)$ exceeds the threshold $m_i(q-p\brho_i)/(2q)$, $\Phi_i(t)$ will decrease with $\mathcal{S}_i(t)$.
\end{lemma}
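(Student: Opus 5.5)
Since $\Phi_i(t)=\rho_i\bigl[1+\tfrac{r}{\delta}\mathcal{F}_B(\mathcal{S}_i(t);\rho_i,m_i)\bigr]$ depends on $\mathcal{S}_i(t)$ only through $\mathcal{F}_B$, and $\rho_i r/\delta>0$, the sign of $\partial\Phi_i/\partial\mathcal{S}_i$ equals the sign of $\partial\mathcal{F}_B/\partial\mathcal{S}_i$. The plan is therefore to differentiate the Bass growth function \eqref{Eq:Bs_Evolution} directly and study the sign of a linear function of $\mathcal{S}_i$.

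Writing $x=\mathcal{S}_i/m_i$, we have $\mathcal{F}_B=(p\rho_i+qx)(1-x)$, a concave quadratic in $x$. The product rule gives
\begin{align}
\frac{\partial \mathcal{F}_B}{\partial \mathcal{S}_i}=\frac{1}{m_i}\Bigl[q(1-x)-(p\rho_i+qx)\Bigr]=\frac{1}{m_i}\Bigl[q-p\rho_i-\frac{2q}{m_i}\mathcal{S}_i\Bigr].\nonumber
\end{align}
This is affine and strictly decreasing in $\mathcal{S}_i$, since $q>0$. It vanishes exactly at $\mathcal{S}_i^{\circ}=m_i(q-p\rho_i)/(2q)$, is nonnegative for $\mathcal{S}_i\le\mathcal{S}_i^{\circ}$, and is strictly negative for $\mathcal{S}_i>\mathcal{S}_i^{\circ}$. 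Multiplying by $\rho_i r/\delta>0$ gives the same conclusion for $\Phi_i$.

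It remains to reconcile this with the two-part condition in the statement. If $\rho_i\le q/p$, then $\mathcal{S}_i^{\circ}\ge0$, and the sign split is exactly as stated. If $\rho_i>q/p$, then $\mathcal{S}_i^{\circ}<0$. Because follower counts are nonnegative (with $\mathcal{S}_i\in[0,m_i]$), every admissible $\mathcal{S}_i$ exceeds $\mathcal{S}_i^{\circ}$, so the derivative is strictly negative, which matches the ``otherwise'' branch. The condition $\rho_i\le q/p$ is thus redundant given $\mathcal{S}_i\ge0$ but harmless, and in the proof I would state explicitly that this is the role it plays.

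The only delicate point is the boundary of the increasing region: at $\mathcal{S}_i=\mathcal{S}_i^{\circ}$ the derivative is exactly zero, which is why that case sits in the ``$\ge0$'' branch. The final sentence of the lemma then follows immediately from strict negativity above the threshold. The whole argument is a routine calculation with no substantive obstacle; I would also note in passing that the threshold matches the one appearing in Theorem~\ref{Thm:Opt_policy_2Crtr_Bs}.
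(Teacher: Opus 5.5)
Your proposal is correct and takes essentially the same route as the paper: compute $\partial\Phi_i/\partial\mathcal{S}_i=\frac{\rho_i r}{\delta m_i}\bigl(q-p\rho_i-\frac{2q}{m_i}\mathcal{S}_i\bigr)$ and read off the sign of this affine function. Your explicit remark that the condition $\rho_i\le q/p$ matters only because $\mathcal{S}_i\ge 0$ is slightly more careful than the paper, which states the equivalence without invoking nonnegativity.
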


The non-monotonicity in Lemma~\ref{lem:2Crtr_bass_partial_Sit} means that when a creator is still in the early acceleration stage (due to word-of-mouth effect), allocating additional traffic to that creator can increase its future marginal gains. Consequently, the optimality conditions extend beyond a static comparison of $\Phi_1$ and $\Phi_2$, giving rise to a conditional reversal phase in which all traffic is temporarily allocated to the creator with the currently lower marginal value in order to exploit the acceleration effect.

As a result, the first switching event need not occur at $\Phi_1=\Phi_2$. In particular, when creator 2 remains in the accelerating region, the platform may optimally switch from prioritizing creator 1 to creator 2 even when the gap $\Phi_1-\Phi_2$ is strictly positive. By contrast, once both creators exit the accelerating stage and the system enters the long-run sharing regime, the platform allocates traffic so as to maintain $\Phi_1=\Phi_2$. Thus, the condition $\Phi_1=\Phi_2$ characterizes the balanced regime in which neither creator is accelerating, but it does not fully describe the switching boundary, which needs to be described by the implicit curves $g_1$ and $g_2$.

Taking these dynamics into account, we characterize the optimal trajectory under Bass growth. We first introduce a key quantity governing switching behavior when a creator remains in the accelerating stage. In this region, the platform may optimally allocate traffic to a creator even when its marginal value is currently lower, in order to exploit the word-of-mouth acceleration effect. This behavior is characterized by an \textit{activation threshold}, denoted by $\eta_i\geq 0$, which represents the maximal marginal-value gap under which it becomes optimal to begin allocating traffic to creator $i$. Importantly, the activation threshold $\eta_i$ depends on the initial follower base $(S_1,S_2)$. For notational simplicity, we write $\eta_i$ as a shorthand, since the evolution of optimal trajectory is described for a fixed initial follower state.

We fix the initial marginal values $\Phi_i(0)=\phi_i$, and suppose $\phi_{i_1}>\phi_{i_2}$. Let $\tilde{S}_i=m_i(q-p\rho_i)/(2q)$, and denote $S_i=\mathcal{S}_i(0)$. The optimal trajectory then falls into the following cases.\newline

\noindent\textbf{Case 1. }When $S_{i_2}\geq\tilde{S}_{i_2}$, i.e., creator $i_2$ is already past the accelerating stage, the optimal trajectory follows a two-phase pattern:
\begin{itemize}
    \item[(a)] \textbf{(Prioritization Phase)} All traffic is allocated to creator $i_1$, until the marginal values equalize, i.e., $\Phi_{i_1}(t)=\Phi_{i_2}(t)$.
    \item[(b)] \textbf{(Balanced Phase)} Once this equality is reached, traffic is split to maintain $\Phi_{i_1}(t)=\Phi_{i_2}(t)$ thereafter.
\end{itemize}
\textbf{Case 2. }When $S_{i_2}<\tilde{S}_{i_2}$, this is the key region that we have insights unique to the Bass model, where creator $i_2$ remains in the accelerating region, the optimal trajectory may exhibit a conditional reversal phase. In this case, there exists a threshold $\eta_{i_2}\geq 0$ such that the evolution depends on the initial gap in marginal values.
\begin{itemize}
    \item[(2A)] If $\phi_{i_1}-\phi_{i_2}>\eta_{i_2}$, the optimal trajectory has three phases:
    \begin{itemize}
        \item[(a)] \textbf{(Prioritization Phase)} All traffic is allocated to creator $i_1$ until
        \begin{align*}
            \Phi_{i_1}(t)=\Phi_{i_2}(t)+\eta_{i_2}.
        \end{align*}
        \item[(b)] \textbf{(Conditional Reversal Phase)} Traffic is then temporarily entirely to creator $i_2$ until the marginal values equalize $\Phi_{i_1}(t)=\Phi_{i_2}(t)$.
        \item[(c)] \textbf{(Balanced Phase)} Then, the platform splits traffic to maintain $\Phi_{i_1}(t)=\Phi_{i_2}(t)$.
    \end{itemize}
    \item[(2B)] If $\phi_{i_1}-\phi_{i_2}\leq\eta_{i_2}$, the optimal trajectory begins directly with the conditional reversal:
    \begin{itemize}
        \item[(a)] \textbf{(Conditional Reversal Phase)} All traffic is allocated to creator $i_2$ until $\Phi_{i_1}(t)=\Phi_{i_2}(t)$.
        \item[(b)] \textbf{(Balanced Phase)} Then, the platform splits traffic to maintain $\Phi_{i_1}(t)=\Phi_{i_2}(t)$.
    \end{itemize}
\end{itemize}

\subsubsection{Illustration of the Optimal Policy.}\label{Subsubsec:Opt_policy_2Crtr_Bs_it}
In this section, we use Figure~\ref{Fig:Opt_policy_2Crtr_Bs_illust} to visualize the optimal policy under Bass dynamics as stated in Theorem ~\ref{Thm:Opt_policy_2Crtr_Bs}. The state space $(\mathcal{S}_1,\mathcal{S}_2)$ is partitioned by the two switching curves: the left curve $\mathcal{S}_1=g_2(\mathcal{S}_2)$ and the right curve $\mathcal{S}_1=g_1(\mathcal{S}_2)$. The horizontal dashed line $\mathcal{S}_2=m_2(q-p\rho_2)/(2q)$ marks the follower-base threshold that separates creator 2’s accelerating stage (below the line) from its decelerating stage (above the line), and therefore determines whether a conditional reversal can occur upon reaching the right switching boundary.

The system’s dynamics are richer than in the linear case. Consider a trajectory starting at $P_1$, which lies in a region where the policy allocates all traffic to creator 2. The state therefore moves vertically upward as $\mathcal{S}_2$ increases. When the trajectory reaches the left switching curve, the optimal action switches to allocating all traffic to creator 1, so the state moves horizontally to the right as $\mathcal{S}_1$ increases. Upon reaching the right switching curve, the trajectory enters the boundary regime, since $\mathcal{S}_2$ is above the dashed-line threshold, the platform splits traffic according to $\bar{h}(\mathcal{S}_2)$, and the state evolves upward while remaining aligned with the right switching curve.

Next, consider a trajectory starting at $P_3$, which lies between the two switching curves where the platform allocates all traffic to creator 1. The state initially moves horizontally to the right until it reaches the right switching curve. Since the intersection occurs below the dashed line, creator 2 is still in the accelerating stage, so the optimal policy induces a conditional reversal, temporarily allocating all traffic to creator 2. The state then moves vertically upward until it reaches the right switching boundary again at a higher $\mathcal{S}_2$, after which the system transitions into the balanced regime and traffic is split according to $\bar{h}(\mathcal{S}_2)$.

Finally, a trajectory starting at $P_2$ reaches the right switching curve above the dashed-line threshold. Since creator 2 is already past its accelerating stage, no conditional reversal is triggered, the platform moves directly into the balanced regime.

\begin{figure}[h]
\centering
\includegraphics[scale=0.5]{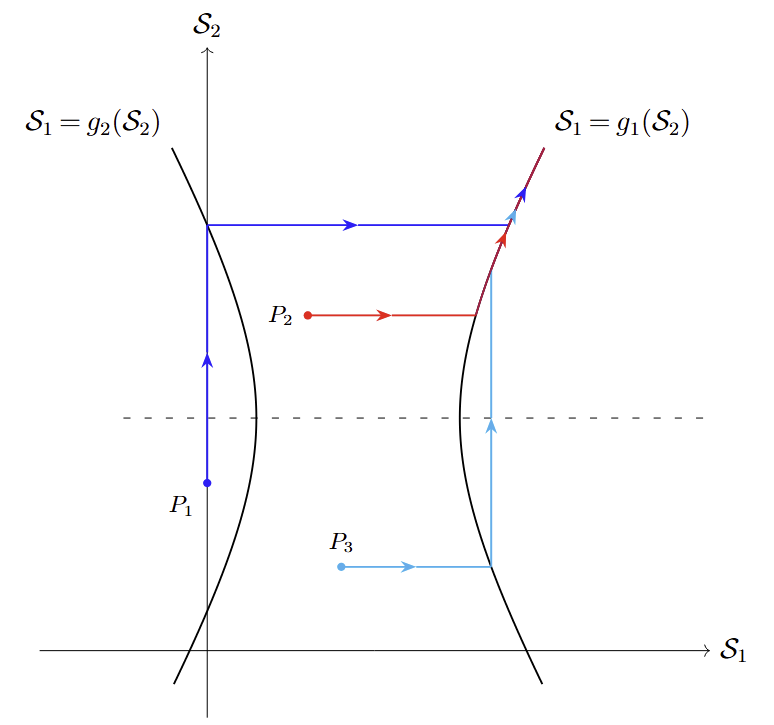}

\caption{The illustration of the optimal policy in $(\mathcal{S}_1,\mathcal{S}_2)$ with example trajectories shown, where $\brho_{1}=2.0, \brho_{2}=1.5, m_{1}=10.0, m_{2}=15.0, r=2, p=1, q=20, \delta=1$}
\label{Fig:Opt_policy_2Crtr_Bs_illust}
\end{figure}

In contrast to the linear model's simple path, the Bass model's non-monotonic marginal benefits create more complex trajectories with multiple switching phases, reflecting the platform's strategic response to word-of-mouth dynamics.

\subsection{Long-Term Follower Base}\label{Subsec:2Crtr_Long_Base}

Having characterized the optimal dynamic policies, we now address Research Question 2: how does the optimal policy shape the creator ecosystem in the long term? The ``most valuable creator first" rule not only governs momentary traffic allocation but also fundamentally determines the market structure. Specifically, it determines whether all creators will be ultimately nurtured by the platform (viability) and the effective limits imposed on their growth (disciplined growth). 

Our focus is on creator 2, since creator 1, being the more capable under the assumed ordering $\brho_1>\brho_2$, always receives sufficient traffic to eventually reach its full follower capacity $m_1$. 

As we will formalize in Proposition \ref{Prop:2Crtr_Long_base}, two key outcomes emerge. First, the platform performs an implicit capability screening: creator 2 receives positive traffic only if its capability $\brho_2$ exceeds a critical threshold $\brho_2^{\,\min}$, provided its initial follower base is sufficiently small. Second, even when this threshold is met, creator 2 grows only up to an effective long-term follower base $\bar{S}_2$, which is strictly below its natural capacity $m_2$. While $m_2$ is the theoretical maximum number of followers, $\bar{S}_2$ represents the actual asymptotic limit under the optimal policy and serves as the effective follower cap.


\begin{proposition}\label{Prop:2Crtr_Long_base}
    Assume $\rho_1> \rho_2$, with linear or Bass growth model and the optimal policy $\gamma^*$ stated in Theorem \ref{Thm:Opt_policy_2Crtr_Ln}, \ref{Thm:Opt_policy_2Crtr_Bs}, respectively,
    \begin{itemize}
        \item[(a)] Creator 1 achieves its full capacity asymptotically, $\lim_{t\to\infty}\mathcal{S}_1(t)=m_1$.
        \item[(b)] There exists threshold $\rho_{2}^{\min}$ such that creator 2 receives positive traffic if and only if $\rho_2>\rho_{2}^{\min}$. Explicitly, under linear growth we have
        \begin{align}
            \brho_2^{\min}=\brho_{2,linear}^{\,\min}&=
            \frac{-1+\sqrt{1+4\brho_1\cdot\frac{pr}{\delta}\left(1-\frac{S_2}{m_2}\right)}}{2\cdot\frac{pr}{\delta}\left(1-\frac{S_2}{m_2}\right)}.\nonumber
        \end{align}
        Under Bass growth we can bound the capability threshold from above
        \begin{align}
            \rho_2^{\,\min}\leq \brho_{2,bass}^{\,\min}&=
            \frac{-1+\sqrt{\left[1+\frac{rqS_2}{m_2\delta}\left(1-\frac{S_2}{m_2}\right)\right]^2+4\brho_1\cdot\frac{pr}{\delta}\left(1-\frac{S_2}{m_2}\right)}}{2\cdot\frac{pr}{\delta}\left(1-\frac{S_2}{m_2}\right)}-\frac{qS_2}{2pm_2}.\nonumber
        \end{align}
        Both $\brho_{2,linear}^{\,\min}$ and $\brho_{2,bass}^{\,\min}$ are decreasing in $p$ and $r$. Moreover, $\brho_{2,bass}^{\,\min}$ is decreasing in $q$. Hence, under a globally higher conversion rate or subscription fee, the admission requirement will be less strict.
        
        \item[(c)] Suppose that $\rho_2>\rho_2^{\min}$, the follower base of creator 2 converges to a finite upper bound $\bar{S}_2<m_2$. Explicitly, under linear growth
        \begin{align}
            \bar{S}_2=\bar{S}_{2,\text{linear}}=m_2\left(1-\frac{\brho_1-\brho_2}{\brho_2^2}\cdot\frac{\delta}{pr}\right).\nonumber
        \end{align}
        Under Bass growth we have
        \begin{align}
            \bar{S}_2=\bar{S}_{2,\text{bass}}=\frac{m_2}{2q}\left[-(p\brho_2-q)+\sqrt{(p\brho_2+q)^2-\frac{4q\delta}{r}\left(\frac{\brho_1}{\brho_2}-1\right)}\right].\nonumber
        \end{align}
        That is, $\mathcal{S}_2(t)\leq\bar{S}_2$ for all $t\geq 0$, and $\lim_{t\to\infty}\mathcal{S}_2(t)=\bar{S}_2=\sup_{t\geq 0}\mathcal{S}_2(t)$. 
        
        Both $\bar{S}_{2,\text{linear}}$ and $\bar{S}_{2,\text{bass}}$ are increasing in $\rho_2$, $p$, $r$. In addition, $\bar{S}_{2,\text{bass}}$ also increases in $q$. Thus, improving creator capability, conversion rate, subscription fee all raises creator 2’s long-term follower ceiling.
    \end{itemize}
\end{proposition}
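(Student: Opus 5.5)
Our plan is to read all three parts directly off the policy structure in Theorems \ref{Thm:Opt_policy_2Crtr_Ln} and \ref{Thm:Opt_policy_2Crtr_Bs}, using the marginal value indices $\Psi_i$ and $\Phi_i$ and Proposition~\ref{Prop:Opt_policy_2Crtr_Ln_Psi_Eq_gS1}.

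\textbf{Part (a).} Since $\mathcal{A}_1+\mathcal{A}_2=A$ at every instant, at least one creator receives traffic at rate at least $A/2$ on any time set of positive measure. We first show that creator 2 cannot absorb all traffic forever. In the linear case, giving all traffic to creator 2 drives $\Psi_2\downarrow\rho_2<\rho_1\le\Psi_1$, so the state crosses the line $g$ in finite time. In the Bass case, $\mathcal{S}_2\to m_2$ sends the state into the region $\mathcal{S}_1<g_1$, where creator 1 is served. Hence creator 1 receives infinite cumulative traffic.

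Along the balanced phase, $\Psi_1=\Psi_2$ (resp.\ $\Phi_1=\Phi_2$). If $\mathcal{S}_1$ stayed bounded away from $m_1$, then $\Psi_1$ would stay bounded away from $\rho_1$, while $\mathcal{F}_1$ would stay bounded below. The ODE with cumulative traffic diverging would then force $\mathcal{S}_1\to m_1$, a contradiction. Since $\mathcal{S}_1$ is monotone and bounded by $m_1$, it converges, and the limit must be $m_1$.

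\textbf{Part (c).} Let $\mathcal{S}_1\to m_1$ in the balanced identity. Then $\Psi_1\to\rho_1$, so the limit $\bar S_2$ solves
\begin{align*}
\rho_2\left[1+\tfrac{r}{\delta}\mathcal{F}(\bar S_2;\rho_2,m_2)\right]=\rho_1 .
\end{align*}
In the linear case, $1-\bar S_2/m_2=\delta(\rho_1-\rho_2)/(pr\rho_2^2)$, which gives the stated $\bar S_{2,\text{linear}}$. This also agrees with $g(m_1)$ in Theorem \ref{Thm:Opt_policy_2Crtr_Ln}. In the Bass case, set $x=\bar S_2/m_2$. The equation becomes the quadratic
\begin{align*}
(p\rho_2+qx)(1-x)=\tfrac{\delta}{r}\left(\tfrac{\rho_1}{\rho_2}-1\right).
\end{align*}
We take the larger root, which lies on the decreasing branch of $\Phi_2$ by Lemma \ref{lem:2Crtr_bass_partial_Sit}, since the balanced regime only runs where $\mathcal{S}_2>\tilde S_2$. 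This gives $\bar S_{2,\text{bass}}$. Strict inequality $\bar S_2<m_2$ follows because the right-hand side is positive when $\rho_1>\rho_2$.

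The bound $\mathcal{S}_2(t)\le\bar S_2$ and the supremum claim use monotonicity of $\mathcal{S}_2$, since the follower base is nondecreasing. If $\mathcal{S}_2$ ever exceeded $\bar S_2$, then $\Phi_2<\rho_1\le\Phi_1$ on the decreasing branch, so creator 2 would never be served again. Its follower base would then freeze above the limit, contradicting convergence to $\bar S_2$. The comparative statics follow from the implicit function theorem applied to $G(\bar S_2,\theta)=\rho_2(1+\tfrac r\delta\mathcal{F})-\rho_1$. We have $\partial_{S}G<0$ on the relevant branch and $\partial_\theta G>0$ for $\theta\in\{\rho_2,p,r,q\}$. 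For $\theta=\rho_2$ we use $\rho_1/\rho_2$ decreasing in $\rho_2$.

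\textbf{Part (b).} Creator 2 is ever served if and only if its initial follower base lies below its eventual ceiling, i.e.\ $S_2<\bar S_2$. Equivalently, at the terminal comparison $\mathcal{S}_1\approx m_1$ we need $\Phi_2$ evaluated at $S_2$ to exceed $\rho_1$. In the linear case the condition is
\begin{align*}
\rho_2\left[1+\tfrac{pr}{\delta}\rho_2\left(1-\tfrac{S_2}{m_2}\right)\right]>\rho_1 .
\end{align*}
This is a quadratic in $\rho_2$, and its positive root is $\rho^{\min}_{2,linear}$. In the Bass case, the same inequality with $\mathcal{F}_B(S_2)$ is sufficient but not necessary, because conditional reversal can activate creator 2 even when the gap is positive. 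This is why we only obtain the upper bound $\rho^{\min}_{2,bass}$, again as the positive root of a quadratic in $\rho_2$. The monotonicity of these roots in $p,r,q$ follows by differentiating the quadratic, or simply by noting that the left-hand side increases in these parameters.

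\textbf{Main obstacle.} The hardest step is the Bass part of (b) and (c): rigorously showing that the trajectory enters the balanced regime on the decreasing branch, and ruling out that conditional reversal pushes $\mathcal{S}_2$ beyond $\bar S_2$. We would handle this by arguing that reversal ends exactly when $\Phi_1=\Phi_2$ is reached with $\Phi_1\ge\rho_1$, so that $\mathcal{S}_2\le\bar S_2$ at that moment. After that, the balanced dynamics keep $\Phi_2=\Phi_1$, which is decreasing toward $\rho_1$.
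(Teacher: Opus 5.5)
Your overall route matches the paper's. You compare creator~2's frozen index with creator~1's index as it decreases to $\rho_1$, pass to the limit in the balanced identity taking the larger Bass root, and use implicit differentiation for the comparative statics. Two steps need repair.

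\textbf{Part (a).} Showing that an all-to-creator-2 phase ends in finite time does not show that creator~1's cumulative traffic diverges. As written, creator~1 could receive only a summable amount afterwards. The paper closes this with the phase structure: after the last switch, creator~1 receives either $A$ or the balanced share $h$ (resp.\ $\bar h$), which stays strictly positive while $\mathcal{S}_1<m_1$. Your pigeonhole idea can also be made rigorous, without that information, by proving that creator~2's cumulative traffic is finite.
\begin{itemize}
\item[(i)] Suppose it were infinite. Since $\mathcal{F}(\mathcal{S}_2;\rho_2,m_2)\ge p\rho_2(1-\mathcal{S}_2/m_2)$, this gives $\mathcal{S}_2\to m_2$.
\item[(ii)] Hence at some time $t_0$ creator~2's index falls below $\rho_1$ on its decreasing branch.
\item[(iii)] Because $\mathcal{S}_2$ never decreases and $\Phi_1\ge\rho_1$, we get $\Phi_1(s)-\Phi_2(s)>0$ for all $s\ge t_0$ (with $\Psi_i$ in place of $\Phi_i$ in the linear case). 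So $E(t)=\delta\int_t^\infty e^{\delta(t-s)}(\Phi_1-\Phi_2)\,ds>0$ for $t\ge t_0$, and creator~2 receives nothing after $t_0$, a contradiction.
\item[(iv)] Creator~1's cumulative traffic is then $At$ minus a finite amount. The bound $\mathcal{F}(\mathcal{S}_1;\rho_1,m_1)\ge p\rho_1(1-\mathcal{S}_1/m_1)$ then forces $\mathcal{S}_1\to m_1$.
\end{itemize}
This version needs no lower bound on $h$ or $\bar h$.

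\textbf{Part (b).} The statement asserts that a threshold $\rho_2^{\min}$ exists in the Bass case too. Your sufficient condition $\phi_2>\rho_1$ only yields the upper bound; you still need activation to be monotone in $\rho_2$. The paper gets this from two observations: before activation $\mathcal{S}_2\equiv S_2$, and creator~1's path does not depend on $\rho_2$. Raising $\rho_2$ then raises creator~2's marginal value path and so makes the activation condition easier to meet. Some care is needed here, since $\rho_2$ also shifts $\tilde S_2$ and, through the growth speed, $\eta_2$.

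Your opening equivalence ``served iff $S_2<\bar S_2$'' should be confined to the linear case. Under Bass growth, every $S_2<\tilde S_2$ satisfies $S_2<\bar S_2$ whenever $\bar S_2$ exists, yet creator~2 is never activated if $\phi_2+\eta_2\le\rho_1$. Even in the linear case it is better argued directly, as the paper does. While creator~2 is idle, $\Psi_2\equiv\Psi_2(S_2)$, whereas $\Psi_1(t)$ decreases continuously toward $\rho_1$ and stays strictly above it. So activation occurs iff $\Psi_2(S_2)>\rho_1$.

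\textbf{Part (c).} This part is fine. Deriving $\mathcal{S}_2(t)\le\bar S_2$ from the monotonicity of $\mathcal{S}_2$ plus the identified limit is cleaner than the paper's argument, which tracks when creator~2 is served. It also makes your extra contradiction step redundant.
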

This proposition reveals a deliberate and economically rational platform strategy. The platform does not passively allocate traffic, instead it actively screens creators, prioritizing only those with sufficient capability and growth potential. The threshold $\brho_2^{\,\min}$ acts as a barrier to entry, ensuring that traffic is not wasted on creators unlikely to generate a positive long-term return.


Furthermore, even for creators who pass this screen, the platform enforces disciplined growth. The effective cap $\bar{S}_2$ reflects the constant opportunity cost of diverting traffic from the more capable creator 1. This ensures that while promising creators are nurtured, the platform's resources remain optimally deployed to maximize total revenue.

\section{Optimal Policy: Multiple Creators}\label{Sec:Opt_policy_NCrtr}
In this section, we extend the framework from two to $N$ heterogeneous creators, focusing on the richer Bass dynamics because the linear model arises as a special case when $q=0$. Section~\ref{Subsec:Opt_policy_NCrtr_Bs} derives the optimal traffic allocation policy under $\mathcal{F}_B$, showing that the structural results from two-creator case follows. Importantly, the conditional reversal phenomenon observed in the two-creator Bass model also arises here, whenever a lagging creator remains in the early accelerating stage of their growth curve. Section~\ref{Subsec:Def_Crtr_type} introduces a classification of creator types, allowing the framework to extend further to multi-type environments. 


\subsection{The Bass Model}\label{Subsec:Opt_policy_NCrtr_Bs}
The optimal traffic allocation policy across all $N$ creators is characterized in Section~\ref{Subsubsec:Opt_policy_NCrtr_Bs_op}, together with a numerical example for $N=3$. In Section~\ref{Subsubsec:Opt_policy_NCrtr_Bs_ltb}, we further analyze the capability thresholds required for creators to receive positive traffic under the optimal policy and characterize their corresponding long-term follower bases.

    
\subsubsection{The Optimal Policy.}\label{Subsubsec:Opt_policy_NCrtr_Bs_op}
To characterize the optimal policy, we similarly use the marginal value functions $\Phi_i(t)$ and the activation thresholds $\eta_i$ defined as in Section~\ref{Subsubsec:Opt_policy_2Crtr_Bs_op}. For notational convenience, consider any set $M(t)\subseteq{1,\dots,N}$ such that $\Phi_i(t)=\Phi_{i'}(t)$ for all $i,i'\in M(t)$. We will write $\Phi_M(t):=\Phi_i(t)$ for any $i\in M(t)$. In addition, for any set $M(t)\subseteq{1,\dots,N}$ (with $\Phi_i(t)=\Phi_{i'}(t)$ for all $i,i'\in M(t)$) and any $j\notin M$, we will denote the gap $D_{M,j}(t)$ as
\begin{align*}
D_{M,j}(t):=\Phi_M(t)-\Phi_j(t).
\end{align*}
The following theorem presents the optimal traffic allocation policy.

\begin{theorem}\label{Thm:Opt_policy_NCrtr_Bs}
    Under the Bass follower growth with $N$ heterogeneous creators, let the initial number of followers for creator $i=1,\dots,N$ be $S_1,\dots,S_N$. Then, 
    \begin{itemize}
        \item i) There exist thresholds $\eta_1,\dots,\eta_N$ such that for each creator $j$: if $S_j<\tilde{S}_j$, then $\eta_j\geq 0$; if $S_j\geq\tilde{S}_j$, then $\eta_j=0$, where $\tilde{S}_j=m_j(q-p\rho_j)/(2q)$.
        \item ii) There exists a nonempty initial active set $M(0)\subseteq\{1,\dots,N\}$, where it can assumed without loss of generality that $M(0)=\{k\}$. 
        \item iii) $\Phi_k(0)-\Phi_j(0)=\phi_k-\phi_j>\eta_j$ for all $j\neq k$. 
    \end{itemize}
    The optimal policy is characterized by a sequence of switching times $\{t_{\ell}\}_{\ell\geq 0}$ and active sets $\{M_{\ell}\}_{\ell\geq 0}$ defined recursively. Set $t_0=0$, $M_0=M(0)$. Suppose at stage $\ell\geq 0$, the process begins at time $t_{\ell}$ with active set $M_{\ell}:=M(t_{\ell})$.
    \begin{itemize}
        \item[(1)] Fix $t_{\ell}$ and $M_{\ell}$ at the beginning of stage $\ell$, and let $\Phi_i(t)$ and $D_{M_{\ell},j}(t)$ be evaluated along the state trajectory specified by the allocation rule below:
        \begin{itemize}
            \item[(a)] If $M_{\ell}=\{k\}$, choose the allocation $\mathcal{A}_k(t)=A$, $\mathcal{A}_j(t)=0$ for all $j\neq k$, until the first time $$\tau_{\ell}= \inf\{t\geq t_{\ell}:\exists\,j\neq k\text{ such that }\Phi_k(t)-\Phi_j(t)\leq\eta_j\}.$$
            
            \item[(b)] If $|M_{\ell}|\geq 2$, allocate traffic on $M_{\ell}$ such that $\Phi_i(t)=\Phi_{i'}(t)$ for all $i,i'\in M_{\ell}$ until the first time $$\tau_{\ell}=\inf\{t\geq t_{\ell}: \exists\,j\notin M_{\ell}\text{ such that }D_{M_{\ell},j}(t)\leq\eta_j\}.$$
            In particular, if $\tau_{\ell}=+\infty$, then the allocation continues indefinitely and the process terminates.
        \end{itemize}
        \item[(2)] Let $j_{\ell}$ be any creator attaining the switching condition at time $\tau_{\ell}$.
        \begin{itemize}
            \item[(a)] If $\mathcal{S}_{j_{\ell}}(\tau_{\ell})\geq\tilde{S}_{j_{\ell}}$, set $t_{\ell+1}:=\tau_{\ell}$, proceed to (3).
            \item[(b)] If $\mathcal{S}_{j_{\ell}}(\tau_{\ell})<\tilde{S}_{j_{\ell}}$, trigger the conditional reversal phase, allocate all traffic to creator $j_{\ell}$ until the first equality time between $\Phi_{j_{\ell}}(t)$ and $\Phi_{M_{\ell}}(t)$ on the deceleration stage of creator $j_{\ell}$, $$\sigma_{\ell}=\inf\{t\geq \tau_{\ell}: \Phi_{j_{\ell}}(t)=\Phi_{M_{\ell}}(t)\},$$
            where $\Phi_i(t)$ for all $i$ evolve according to the allocation $\mathcal{A}_{j_{\ell}}(t)=A$.
            Define $t_{\ell+1}:=\sigma_{\ell}$, proceed to (3).
        \end{itemize}
        \item[(3)] Enlarge the active set $M_{\ell+1}\leftarrow M_{\ell}\cup\{j_{\ell}\}$, and at $t_{\ell+1}$, traffic is allocated on $M_{\ell+1}$ so that
            \begin{align*}
                \Phi_i(t_{\ell+1})=\Phi_{i'}(t_{\ell+1})\text{ for all }i,i'\in M_{\ell+1},
            \end{align*}
        \item[(4)] Then, we proceed to stage $\ell+1$ that begins at time $t_{\ell+1}$ with updated active set $M_{\ell+1}$ and repeat the above steps.
    \end{itemize}
\end{theorem}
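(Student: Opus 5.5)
The proof will be a Pontryagin maximum principle argument on the current-value Hamiltonian, followed by an induction over stages that reuses the two-creator analysis of Theorem~\ref{Thm:Opt_policy_2Crtr_Bs}.

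\textbf{Step 1: reduction to index comparisons.} The current-value Hamiltonian is
\begin{align*}
H=\sum_{i=1}^N\rho_i[\mathcal{A}_i+r\mathcal{S}_i]+\sum_{i=1}^N\lambda_i\mathcal{A}_i\mathcal{F}_{B,i}(\mathcal{S}_i),
\end{align*}
with costates $\dot\lambda_i=\delta\lambda_i-\rho_i r-\lambda_i\mathcal{A}_i\mathcal{F}_{B,i}'(\mathcal{S}_i)$. Because $H$ is linear in $\mathcal{A}$ on the simplex, the maximum principle gives a bang-bang rule: traffic goes only to creators maximizing the switching function $\sigma_i:=\rho_i+\lambda_i\mathcal{F}_{B,i}(\mathcal{S}_i)$.

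\textbf{Step 2: singular arcs and the balanced phase.} On a singular arc, $\sigma_i$ must be constant in time for each $i$ in the active set $M$. I will show that this forces $\lambda_i=\rho_i r/\delta$, so that $\sigma_i=\Phi_i$. This identification explains why the balanced phase keeps the $\Phi_i$ equal. Differentiating the constraints $\Phi_i(t)=\Phi_{i'}(t)$ together with $\sum_{i\in M}\mathcal{A}_i=A$ gives a linear system with a unique solution, generalizing $\bar h$. Its components are nonnegative exactly when every $i\in M$ satisfies $\partial\Phi_i/\partial\mathcal{S}_i<0$, which by Lemma~\ref{lem:2Crtr_bass_partial_Sit} means $\mathcal{S}_i\geq\tilde S_i$. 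This establishes rule (1b), and it also shows that a creator with $\mathcal{S}_j<\tilde S_j$ cannot enter a singular arc directly. Such a creator must first be pushed past its acceleration region by a full-traffic arc, which is rule (2b).

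\textbf{Step 3: defining $\eta_j$ and the stage recursion.} Define $\eta_j$ as the value gap at which the following two options break even: continuing on $M$, or spending an entire arc on $j$ until $\Phi_j$ meets $\Phi_M$ on its decreasing branch. Equivalently, $\eta_j$ is the gap at which $\sigma_j$, computed from the costate obtained by integrating backward from the junction time $\sigma_\ell$, first equals $\sigma_M$. If $\mathcal{S}_j\geq\tilde S_j$, then $\Phi_j$ can only decrease under its own traffic, so no reversal is profitable and $\eta_j=0$. This gives item i).

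Items ii) and iii) follow by choosing $k$ as the creator whose $\sigma$ is initially the largest. The case $|M(0)|\geq2$ reduces to (1b) after relabeling, which justifies the w.l.o.g.\ statement. The recursion is then an induction on $\ell$:
\begin{itemize}
\item[(a)] On each stage, only the pairwise comparison between $M_\ell$ and the first creator $j_\ell$ to hit its threshold matters.
\item[(b)] Because growth dynamics are independent across creators, the merged block $M_\ell$ behaves like a single aggregated creator whose index $\Phi_M$ decreases along the singular arc.
\item[(c)] The two-creator switching analysis of Theorem~\ref{Thm:Opt_policy_2Crtr_Bs} therefore applies to the pair $(M_\ell,j_\ell)$, with $g_1,g_2$ replaced by the threshold conditions $D_{M_\ell,j}=\eta_j$ and $\Phi_{j}=\Phi_{M_\ell}$.
\end{itemize}
The active set only grows, so there are at most $N-1$ switches.

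\textbf{Step 4 and main obstacle: sufficiency.} Pontryagin's conditions are only necessary, and the Bass dynamics are non-concave, so the Arrow/Mangasarian sufficiency conditions fail. The main obstacle is proving that the constructed policy is actually optimal. I would first try a verification argument: build the value function $V$ from the constructed trajectories and check the HJB equation
\begin{align*}
\delta V=\max_{\mathcal{A}}\Big\{\textstyle\sum_i\rho_i(\mathcal{A}_i+r\mathcal{S}_i)+\sum_i V_{S_i}\mathcal{A}_i\mathcal{F}_{B,i}\Big\}
\end{align*}
region by region, mirroring the proof of Theorem~\ref{Thm:Opt_policy_2Crtr_Bs}. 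The delicate point is the reversal regions. There I would compare the reversal arc with every alternative that delays entry of $j_\ell$, using the transversality condition $e^{-\delta t}\lambda_i(t)\to0$ and a direct exchange argument: swapping small traffic intervals between $j_\ell$ and $M_\ell$ shows that any deviation lowers discounted revenue whenever $D_{M_\ell,j}$ is below $\eta_j$ and $\mathcal{S}_j<\tilde S_j$.
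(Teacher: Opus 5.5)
Your architecture (maximum principle, bang-bang in $\sigma_i$, induction over stages) matches the paper's. The proposal does not yet prove the theorem, however, because global optimality, the step that carries all the weight, is left as a plan. Step~4 only says you \emph{would} build $V$ and check the HJB equation region by region. The exchange argument you describe (swapping small traffic intervals between $j_\ell$ and $M_\ell$) is a local perturbation, so it only reproduces the first-order information already contained in the maximum principle. In a problem whose increasing returns are exactly what produce the reversal, it cannot rule out non-local alternatives, such as never activating $j_\ell$ or starting its reversal at a different gap.

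The paper closes this step in three moves: it solves the costate equation explicitly to get $\lambda_i>0$ from transversality, uses $\mathcal{F}_{B,i}''=-2q/m_i^2<0$ to get $\partial^2\mathcal{H}/\partial\mathcal{S}_i^2=\lambda_i\mathcal{A}_i\mathcal{F}_{B,i}''\le 0$, and cites an infinite-horizon sufficiency theorem. Your stated reason for discarding that route is inaccurate: $\mathcal{F}_{B,i}$ is a concave quadratic in $\mathcal{S}_i$, and the S-shape is in time. Your caution does have a real basis, though. An Arrow-type theorem needs concavity of the maximized Hamiltonian $\sum_i\rho_i r\mathcal{S}_i+A\max_i Y_i(\mathcal{S}_i)$, where $Y_i:=\rho_i+\lambda_i\mathcal{F}_{B,i}(\mathcal{S}_i)$ is your $\sigma_i$. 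Also, $\mathcal{H}$ is not jointly concave in $(\mathcal{S},\mathcal{A})$, so fixed-control concavity does not by itself deliver what that theorem requires.

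A clean way to make Step~4 rigorous is to integrate $\int_0^\infty e^{-\delta t}\mathcal{S}_i\,dt$ by parts, which gives
\[
\Pi=\sum_{i=1}^N\frac{\rho_i r S_i}{\delta}+\int_0^{\infty}e^{-\delta t}\sum_{i=1}^N\Phi_i(t)\,\mathcal{A}_i(t)\,dt .
\]
This is a deterministic rested bandit: creator $i$ moves only when it receives traffic, and it earns $\Phi_i$ per unit of traffic. The level $\phi_j+\eta_j$ is exactly the Gittins index of creator $j$ at $S_j$ (on the decelerating branch the index is $\Phi_j$ itself). The theorem's policy is therefore the index policy, and the index theorem gives global optimality with no concavity needed.

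Step~2 also rests on a false mechanism. On a balanced arc neither $\sigma_i$ nor $\lambda_i$ is constant. Indeed, $\lambda_i\equiv\rho_i r/\delta$ would force $\dot\lambda_i=-\lambda_i\mathcal{A}_i\mathcal{F}_{B,i}'=0$, whereas $\mathcal{A}_i>0$ and $\mathcal{F}_{B,i}'<0$ there.

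The identity you are missing, which drives the paper's entire verification, is $\dot Y_i=\delta Y_i-\delta\Phi_i$. Under transversality this gives $Y_i(t)-Y_j(t)=\delta\int_t^\infty e^{\delta(t-s)}(\Phi_i(s)-\Phi_j(s))\,ds$. So $Y_i$ is a discounted average of future $\Phi_i$, strictly below $\Phi_i(t)$ while $\Phi_i$ falls. Moreover, $Y_i\equiv Y_{i'}$ on an interval forces $\Phi_i\equiv\Phi_{i'}$, which is the correct justification of rule (1b).

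The same identity turns your break-even description of $\eta_j$ into a proof. The paper defines $\phi_j+\eta_j$ as the root of $\mathcal{E}_j(c)=\delta\int_0^{v_j(c)}e^{-\delta t}[c-\Phi_j(\Xi_j(t;S_j))]\,dt$. It obtains existence and uniqueness from $\mathcal{E}_j'(c)=1-e^{-\delta v_j(c)}>0$ together with $\mathcal{E}_j(\phi_j)<0<\mathcal{E}_j(\Phi_j(\tilde S_j))$. It then reads off $Y_{M}=Y_j$ at each threshold and $Y_{M_\ell}<Y_{j_\ell}$ throughout the reversal.

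Your reduction of each stage to Theorem~\ref{Thm:Opt_policy_2Crtr_Bs}, by aggregating $M_\ell$ into one creator, is not justified as stated. The curves $g_1,g_2$ there are built from the inverse branches and full-traffic flow of a single Bass creator, and $M_\ell$ is not one. The direct comparison is simpler. It also supplies what your ``only the pair $(M_\ell,j_\ell)$ matters'' step omits, namely that $j_\ell$ dominates every other inactive $h$ during the reversal, via $Y_{j_\ell}-Y_h=(Y_{j_\ell}-Y_{M_\ell})+(Y_{M_\ell}-Y_h)>0$.

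Finally, ii) and iii) do not follow from picking the creator with the largest initial $\sigma$. That choice is circular, since the costates depend on the future path. Moreover, iii) fails outright when the optimal path opens with a reversal, as in case (2B). The paper simply takes iii) as a hypothesis on the initial state.
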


To describe the optimal control evolution intuitively, the policy operates recursively. Starting from an initial active set, the platform maintains equality of marginal values among the active creators while continuously monitoring the inactive ones. Whenever the gap between the active set's marginal value and an inactive creator $j$'s marginal value shrinks to the activation threshold $\eta_j$, a transition occurs. The exact nature of this transition depends on the lagging creator's momentum: if creator $j$ has already passed their accelerating growth stage (i.e., $S_j(t) \ge \tilde{S}_j$), they are seamlessly added to the active set and sharing begins. However, if creator $j$ is still in their accelerating phase ($S_j(t) < \tilde{S}_j$), the platform triggers a temporary ``Conditional Reversal Phase,'' allocating all traffic exclusively to creator $j$ until their marginal value catches up to the active set. The process then continues recursively with the expanded active set. We present Example \ref{Ex:Opt_policy_NCrtr_Bs} to further illustrate this evolution for the $N=3$ case as below.

\begin{example}\label{Ex:Opt_policy_NCrtr_Bs}
We consider a numerical example with three heterogeneous creators. Let the parameters be given by $(\brho_1,\brho_2,\brho_3)=(3,2,1.5)$, $(m_1,m_2,m_3)=(2,4,2.5)$, $p,q,r,\delta = 1,5,1,1$, $(S_1,S_2,S_3)=(1.8,1.2,0.2)$. We choose these parameters so that all phases of the optimal policy arise.

We first compute the initial marginal values, $(\phi_1,\phi_2,\phi_3)=(5.25,7.80,7.482)$, which yields the ordering $\phi_2>\phi_3>\phi_1$, implying $(i_1,i_2,i_3)=(2,3,1)$. We also solve the switching thresholds $\eta_1,\eta_3$ who satisfy $\phi_2-\phi_1>\eta_1$, $\phi_2-\phi_3>\eta_3$, so the conditional reversal phases at not triggered for creator 1 or 3 at the beginning.

The optimal allocation policy evolves as follows:
\begin{enumerate}
    \item\textbf{(Initial Prioritization Phase)} All traffic is allocated to creator 2. During this phase, $\mathcal{S}_2(t)$ increases until $\Phi_2(t)$ decreases to $\Phi_3(t)+\eta_3=\phi_3+\eta_3$.

    \item\textbf{(Conditional Reversal Phase for Creator 3)} At the time when $\Phi_2(t)=\phi_3+\eta_3$, i.e., $\Phi_2(t)-\phi_3=\eta_3$, we evaluate whether creator 3 meets the reversal condition. Since $S_3=0.2<m_3(q-p\brho_3)/(2q)=0.875$, the platform temporarily reallocates all traffic to creator 3. During this period, $\Phi_3(t)$ will first increase then decrease back to $\phi_3$.

    \item\textbf{(Balancing Phase Between 2 and 3)} After $\Phi_3(t)$ declines back to $\phi_3$, the platform allocates traffic between creators 2 and 3 such that $\mathcal{A}_2^*(t)+\mathcal{A}_3^*(t)=A$, $\Phi_2(t)=\Phi_3(t)$, and these two marginal values co-evolve downward over time.

    \item\textbf{(Conditional Reversal Phase skipped for Creator 1)} When $\Phi_2(t)=\Phi_3(t)=\phi_1+\eta_1$, i.e., $\Phi_2(t)-\phi_1=\eta_1$, we evaluate whether creator 1 meets the reversal condition. Since $S_1=1.8>m_1(q-p\brho_1)/(2q)=0.4$, the condition is not met, the platform proceeds directly to the final allocation phase.

    \item\textbf{(Final Balanced Phase)} The platform allocates traffic among all three creators to maintain equality in marginal values, $\sum_{i=1}^3\mathcal{A}_i^*(t)=3$, $\Phi_1(t)=\Phi_2(t)=\Phi_3(t)$.
\end{enumerate}
\end{example}

We highlight three key features of the optimal allocation policy in Theorem~\ref{Thm:Opt_policy_NCrtr_Bs} and Example~\ref{Ex:Opt_policy_NCrtr_Bs}.  First, the policy can be characterized by the marginal value function $\Phi_i(t)$ and the thresholds $\eta_i$, which capture both the immediate payoff from an additional unit of traffic and its future impact through follower accumulation. This forward-looking metric contrasts with heuristic or myopic rules that focus only on short-term gains while ignoring discounted future revenue.  

Second, the policy alternates between two regimes: (i) exclusive allocation, where all traffic is directed to a single creator (or a set with equal marginal values), and (ii) balancing, where traffic is shared to maintain equality of $\Phi_i(t)$ within the active group. For instance, in Step~1 of Example~\ref{Ex:Opt_policy_NCrtr_Bs}, all traffic goes to creator~3 to exploit its high growth potential, while in Step~5 traffic is split among all three creators to satisfy $\Phi_1(t)=\Phi_2(t)=\Phi_3(t)$. Unlike static proportional rules, this structure coordinates short-term allocations with long-term platform value creation.

Third, under Bass dynamics the policy may temporarily reverse allocation toward a lagging creator if it remains in the accelerating stage of its growth curve. This conditional reversal highlights the strategic value of emphasizing creators who would otherwise be overlooked. We note that in the special case of multi-creator linear growth, conditional reversal is absent as $q=0$ results in $\tilde{S}_j = -\infty$ for all $j$ and the condition $S_j<\tilde{S}_j$ is never satisfied.

\subsubsection{Long-Term Follower Base.}\label{Subsubsec:Opt_policy_NCrtr_Bs_ltb}
Building on the two-creator analysis, we now extend our answer to Research Question 2 to the general setting of $N$ heterogeneous creators under Bass dynamics. We confirm that the optimal policy's role as a ``selective gatekeeper" and enforcer of ``disciplined growth" scales to the entire ecosystem. The platform does not simply promote all creators; instead, it enforces a strict hierarchy of viability and growth limits across creators.

Similarly as in the two-creator case, the most capable creator (indexed as 1) always receives sufficient traffic to reach its full capacity $m_1$. For all other creator $i$, there exists a threshold $\brho_i^{\min}$ such that creator $i$ receives positive traffic under the optimal policy if and only if $\brho_i>\brho_i^{\min}$. In that case, $i$ accumulates followers and converges to a long-term base $\bar{S}_i$. We summarize these results in the following proposition.

\begin{proposition}\label{Prop:NCrtr_Bs_Long_base}
    Assume $\rho_1> \rho_2>\dots> \rho_N$, with the Bass growth model, and the optimal policy $\gamma^*$ stated is Theorem \ref{Thm:Opt_policy_NCrtr_Bs},
    \begin{itemize}
        \item[(a)] Creator 1 achieves its full capacity asymptotically, $\lim_{t\to\infty}\mathcal{S}_1(t)=m_1$.
        \item[(b)] There exists a threshold $\brho_{i}^{\,\min}>0$ for each creator $i\in\mathcal{N}\setminus\{1\}$ such that creator $i$ receives positive traffic if and only if $\brho_i>\brho_{i}^{\,\min}$. In specific, we can bound $\brho_i^{\,\min}$ from above by
        \begin{align*}
            \brho_{i}^{\,\min}\leq \rho_{i,bass}^{\,\min}=
            \frac{-1+\sqrt{\left[1+\frac{rqS_i}{m_i\delta}\left(1-\frac{S_i}{m_i}\right)\right]^2+4\brho_1\cdot\frac{pr}{\delta}\left(1-\frac{S_i}{m_i}\right)}}{2\cdot\frac{pr}{\delta}\left(1-\frac{S_i}{m_i}\right)}-\frac{qS_i}{2pm_i}.\nonumber
        \end{align*}
        
        \item[(c)] For any creator $i\in\mathcal{N}\setminus\{1\}$ such that $\brho_i>\brho_i^{\,\min}$, the follower base of creator $i$ converges to a finite upper bound $\bar{S}_i<m_i$,
        \begin{align}
            \bar{S}_i=\frac{m_i}{2q}\left[-(p\brho_i-q)+\sqrt{(p\brho_i+q)^2-\frac{4q\delta}{r}\left(\frac{\brho_1}{\brho_i}-1\right)}\right].\nonumber
        \end{align}
        That is, $\mathcal{S}_i(t)\leq\bar{S}_i$ for all $t\geq 0$, and $\lim_{t\to\infty}\mathcal{S}_i(t)=\bar{S}_i=\sup_{t\geq 0}\mathcal{S}_i(t)$. 
        
        \item[(d)] Analogous to Proposition \ref{Prop:2Crtr_Long_base}, for each $i\neq 1$, the upper bound for the threshold, $\brho_{i,bass}^{\min}$ is decreasing in $p,r,q$. The long-term follower base $\bar{\mathcal{S}}_{i}$ is increasing in $\brho_i,p,r,q$. These monotonicity properties extend directly from the two-creator case with Bass growth.

    \end{itemize}
\end{proposition}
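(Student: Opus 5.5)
We would prove Proposition~\ref{Prop:NCrtr_Bs_Long_base} by reducing each claim to a pairwise comparison between creator $1$ and creator $i$ along the trajectory generated by Theorem~\ref{Thm:Opt_policy_NCrtr_Bs}, and then reusing the two-creator argument behind Proposition~\ref{Prop:2Crtr_Long_base}. The key structural fact is that in every balanced phase all active creators share a common marginal value $\Phi_M(t)$. Moreover, because total traffic $A>0$ is always allocated, at least one active creator's follower base strictly increases. The first step is to show that creator $1$ enters the active set in finite time and that $\Phi_M(t)$ then tracks $\Phi_1(t)$.

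For part (a), we argue by contradiction. Suppose creator $1$ never becomes active, or becomes active but $\mathcal{S}_1(t)\to s^\ast<m_1$. Then $\Phi_1$ stays bounded above $\rho_1$, since $\mathcal{F}_B(s^\ast)>0$. Every active creator $j\ge2$ eventually has $\mathcal{S}_j$ past $\tilde S_j$, so by Lemma~\ref{lem:2Crtr_bass_partial_Sit} its $\Phi_j$ is strictly decreasing. Traffic $A$ per unit time pushes these $\Phi_j$ toward $\rho_j<\rho_1$. Hence the gap $D_{M,1}$ must eventually fall to $\eta_1$, which forces creator $1$ to be admitted.

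Once creator $1$ is in $M$, the same argument shows that $\Phi_M(t)\downarrow\rho_1$. If instead $\Phi_M$ stayed above $\rho_1+\epsilon$, every active creator's follower base would be bounded away from saturation. Their combined growth rate would then be at least $A\cdot c$ for some $c>0$, contradicting the boundedness of the follower bases $\mathcal{S}_j\le m_j$. The limit $\Phi_1\to\rho_1$ means $\mathcal{F}_B(\mathcal{S}_1)\to0$, that is, $\mathcal{S}_1\to m_1$. For this we need $\rho_1>q/p$, or else that we are on the decreasing branch where the only zero is $m_1$.

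For part (c), the same limit $\Phi_M\to\rho_1$ gives the ceiling for any creator $i$ that is ever active. That creator eventually satisfies
\begin{align*}
\rho_i\Bigl[1+\tfrac{r}{\delta}\mathcal{F}_B(\bar S_i;\rho_i,m_i)\Bigr]=\rho_1 .
\end{align*}
Writing $x=\bar S_i/m_i$, this becomes the quadratic
\begin{align*}
(p\rho_i+qx)(1-x)=\tfrac{\delta}{r}\bigl(\tfrac{\rho_1}{\rho_i}-1\bigr).
\end{align*}
Its larger root is exactly the stated $\bar S_i$. We select the larger root because in the balanced phase $\mathcal{S}_i\ge\tilde S_i$, which is the decreasing branch. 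The bound $\mathcal{S}_i(t)\le\bar S_i$ for all $t$, and monotone convergence to it, follow from two facts: $\Phi_M(t)$ is nonincreasing and stays at least $\rho_1$, and $\Phi_i$ is decreasing on the relevant branch. The only exception is the conditional reversal, which by Theorem~\ref{Thm:Opt_policy_NCrtr_Bs}(2b) ends on the deceleration branch at equality with $\Phi_M$, so it never overshoots. If creator $i$ is never active, $\mathcal{S}_i\equiv S_i$, which is consistent with the statement.

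For part (b), define $\rho_i^{\min}$ as the infimum of capabilities at which creator $i$ is ever admitted. Monotonicity in $\rho_i$ follows because $\Phi_i$ increases in $\rho_i$ while the activation condition $D_{M,i}\le\eta_i$ becomes easier to meet. For the explicit upper bound, we use the worst case $\eta_i=0$. Creator $i$ is certainly admitted if $\phi_i>\rho_1$, since $\Phi_M$ decreases to $\rho_1$. The condition $\rho_i[1+\tfrac r\delta(p\rho_i+qS_i/m_i)(1-S_i/m_i)]>\rho_1$ is a quadratic inequality in $\rho_i$, and its positive root is the displayed $\rho_{i,bass}^{\min}$ after completing the square.

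Part (d) is routine differentiation of the closed forms, as in the two-creator case. We expect the main obstacle to be the limit $\Phi_M(t)\to\rho_1$ in the presence of interleaved conditional reversals. Reversals temporarily raise some $\Phi_j$, so $\Phi_M$ is not globally monotone in a trivial way. We would handle this by noting that there are at most $N-1$ admissions, hence finitely many reversals, after which $\Phi_M$ is monotone.
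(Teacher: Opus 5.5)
Your proposal is correct and follows essentially the same route as the paper: force creator $1$ into the active set by contradiction on the final stage, use the equalized marginal value $\Phi_M=\Phi_1\to\rho_1$ to obtain $\bar S_i$ as the decreasing-branch root, use $\phi_i>\rho_1$ as the sufficient admission condition for the bound on $\rho_i^{\min}$, and differentiate the closed forms for (d). Your total-growth-rate argument for $\Phi_M\to\rho_1$ is actually tighter than the paper's ``positive share, increasing and bounded'' step, which does not by itself force the limit to be $m_1$. Your hedge about needing $\rho_1>q/p$ is unnecessary, because $p\rho_1+qs/m_1>0$ makes $m_1$ the only zero of $\mathcal{F}_B$ on $[0,m_1]$.
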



\subsection{Extension to Multiple Creator Types}\label{Subsec:Def_Crtr_type}
While our analysis so far has treated each creator as unique, real-world platforms often manage creators in cohorts or types that share similar characteristics (e.g., new gaming streamers or established cooking channels). To capture this, we extend the framework to a multi-type model. Each type is aggregated into a single representative creator, and the policies from Section~\ref{Subsec:Opt_policy_NCrtr_Bs} apply directly. Once a type’s total traffic share is determined, it is allocated uniformly across all creators of that type. We begin with the formal definition of a creator type.

\begin{definition}\label{Def:Crtr_type}
    We say that two creators $j_1,j_2$ belong to the same \emph{creator type} if they share identical model parameters and initial conditions. That is,
    \begin{align}
        \brho_{j_1}=\brho_{j_2},\quad m_{j_1}=m_{j_2},\quad \text{and }S_{j_1}=S_{j_2}.\nonumber
    \end{align}
    Creators that satisfy this condition are referred to as \emph{type-$j$} creators, and we denote their common parameters by $\brho_{(j)}$, $m_{(j)}$ and $S_{(j)}$, respectively. The number of creators belonging to type $j$ is denoted by $n_j$.
\end{definition}

If the platform hosts creators from $K$ distinct types, indexed by $j\in\mathcal{K}=\{1,\dots,K\}$, then the population is partitioned into $K$ disjoint groups with corresponding type-specific parameters $\{\brho_{(j)},m_{(j)},S_{(j)}\}_{j=1}^K$, and the total number of creators satisfies $\sum_{j=1}^Kn_j=N$.

The corresponding optimal policy follows a simple three-step procedure. First, the platform aggregates all $n_j$ creators of a given type $j$ into a single representative “meta creator.” Second, it applies the optimal $N$-creator policy from Theorem~\ref{Thm:Opt_policy_NCrtr_Bs} to these $K$ meta creators to determine the total traffic share $\mathcal{A}^*_{(j)}(t)$ for each type. Finally, the traffic allocated to type $j$ is distributed uniformly among the $n_j$ creators of that type, so that each individual creator receives $\mathcal{A}^*_{(j)}(t)/n_j$.

This approach allows the platform to simplify a high-dimensional problem of managing $N$ individuals into a more tractable problem of managing $K$ types, all while preserving the core logic of the optimal allocation strategy. 


\section{Numerical Results: The Heuristics}\label{Sec:Num_results}

To assess the benefits of the optimal allocation, we compare it with three heuristic benchmarks: the myopic policy, the follower-base adjusted policy, and the follower-growth adjusted policy. Numerical experiments are conducted on the two-creator Bass model, with parameters chosen so that both creators start with positive follower bases and sufficient growth potential. Our purpose is twofold: first, to identify settings in which the optimal policy yields substantial gains over heuristics, and to determine when simpler rules can approximate optimal performance closely enough for practical implementation (RQ3); second, to analyze how these policies differentially shape the long-term creator ecosystem (RQ2).

\subsection{The Heuristic Policies}
To evaluate the optimal allocation in practice, we benchmark it against three simple heuristics that capture common rules or tractable approximations to the optimal index. Each heuristic defines a simple ranking function and allocates all traffic to the highest-ranked creator (or divides uniformly if several tie).

\subsubsection*{Myopic Policy.}
The simplest rule is to ignore dynamics and allocate all resources to the creator with the highest capability $\brho_i$. This corresponds to ranking creators solely by their short-term revenue performance (per traffic unit) and directing all traffic to the top performer, i.e., creator $1$ (recall that $\rho_1>\rho_2$ by assumption).

\subsubsection*{Follower-Base Adjusted Policy (FBA).}
The second heuristic augments the capability measure by incorporating the creator’s current follower base, reflecting the idea that popularity today signals potential future value. We will rank the creators by a marginal value proxy
\begin{align*}
    \Phi_i^{(\text{FBA})}=\brho_i[1+\hat{r}\cdot \mathcal{S}_i(t)],
\end{align*}
where $\hat{r}$ is a weight parameter. Whenever a creator's score is higher, the platform allocates all traffic to that creator. This continues until the two scores become equal, after which traffic is allocated in proportions that maintain this equality thereafter.

The value of $\hat{r}$ is chosen from an admissible set $\mathcal{R}$ to maximize the NPV generated by this heuristic. Positive or negative values of $\hat{r}$ tilt the allocation toward creators with larger or smaller follower bases, respectively. In particular, when $\hat{r}\ge 0$, the FBA heuristic coincides with the myopic heuristic, whereas as $\hat{r}\to-\infty$ it degenerates into a rule that prioritizes the type with the smaller value of $\rho_i\mathcal{S}_i(t)$.

\subsubsection*{Follower-Growth Adjusted Policy (FGA).}
The third heuristic is motivated by the marginal value function $\Phi_i(t)$ defined in \eqref{eq:phi_i}. To preserve the forward-looking logic of $\Phi_i(t)$ while making the policy implementable in practice, we rank creators using the score
\[
\Phi_i^{(\text{FGA})}(t)=\rho_i\!\left[1+\kappa\cdot \mathcal{W}_i(t)\right],
\]
where (i) $\kappa$ is an approximation of $r/\delta$ and can be calibrated from historical subscription and discount rate data; and (ii) $\mathcal{W}_i(t)$ is an empirical measure of follower growth per unit of traffic, $(d\mathcal{S}_i/dt)/\mathcal{A}_i$.

For ease of implementation, we further simplify the policy by setting all activation thresholds to zero, i.e., $\eta_i=0$ for all $i$. Under this implementation, the platform simply allocates traffic and expand the activation set according to the ranking induced by $\Phi_i^{(\text{FGA})}$.

\subsection{Revenue Performance (RQ3)}
To evaluate the three heuristics, we measure their performance against the optimal policy using the NPV of total revenue, where we solve the optimal NPV by brute force motivated by the proven structural results.
\begin{definition}
    We define the \textit{gap} between the NPV generated by the optimal policy and some heuristic as $\text{NPV}_{\text{optimal}}-\text{NPV}_{\text{heuristic}}$, and the \textit{percentage gap} between the NPV generated by the optimal policy and some heuristic as $(\text{NPV}_{\text{optimal}}-\text{NPV}_{\text{heuristic}})/\text{NPV}_{\text{heuristic}}$.
\end{definition}

Our first experiments compare the myopic policy, the follower-base adjusted policy, and the optimal allocation. Figure \ref{Fig:heu_myopic_naive_pct} shows the percentage gap as the capability difference varies. The myopic rule can lose 10-25\% of NPV relative to optimal, especially when creators are similarly capable, as shown in Figure \ref{Fig:heu_myopic_naive_pct}(a). The follower-base adjusted rule can close this gap to 5-10\%  if the weight $\hat{r}$ is chosen correctly, as shown in Figure \ref{Fig:heu_myopic_naive_pct}(b).

\begin{figure}[h]
\centering
\begin{subfigure}[b]{0.49\textwidth}
    \includegraphics[scale=0.60]{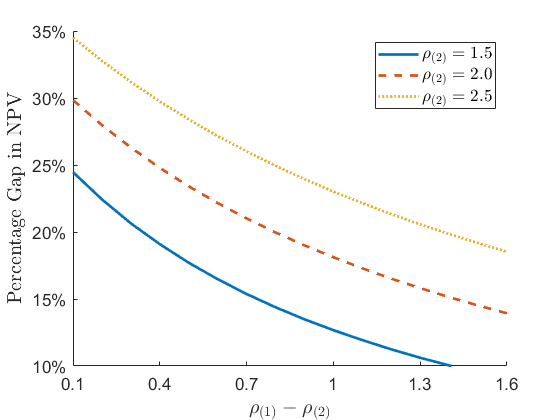}
\end{subfigure}
\begin{subfigure}[b]{0.49\textwidth}
    \includegraphics[scale=0.60]{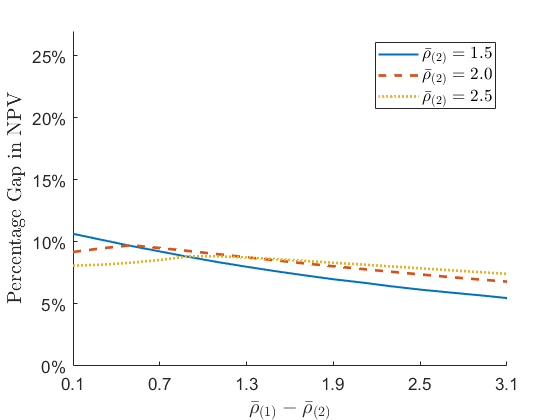}
\end{subfigure}
 \caption{Percentage Gap between the NPV generated by optimal and myopic policy (left) / follower-base adjusted policy (right), where $(A,S_{(1)},S_{(2)})=(10,1,0)$, $(n_1,n_2)=(5,5)$, $(m_{(1)},m_{(2)})=(10.0,15.0)$, $(\delta,q,p,r)=(1,20,1,2)$}
 \label{Fig:heu_myopic_naive_pct}
\end{figure}

The surprising and counter-intuitive result is that the optimal weight is always negative. While common intuition suggests giving more traffic to creators with more followers, the numerical results show the opposite: Performance improves when creators with smaller follower bases are ``promoted.'' The reason is that the marginal value of traffic decreases as a creator accumulates followers, so directing traffic toward smaller creators during their accelerating growth stage produces greater long-run returns. The negative weight in the heuristic effectively captures this dynamic, allowing it to mimic the optimal policy.

To dive in, we compare the performance gap between the myopic and follower-base adjusted policies, measured as $(\text{NPV}_{\text{follower base}}-\text{NPV}_{\text{myopic}})/\text{NPV}_{\text{myopic}}$, across different values of $\hat{r}$. This comparison further highlights the suboptimality that arises when $\hat{r}$ is chosen to be positive. As shown in Figure \ref{Fig:myopic_fba_gap}, the advantage of the follower-base adjusted policy over the myopic heuristic increases with $\hat{r}$ until reaching the optimal value. When $\hat{r} \ge 0$, however, the two heuristics coincide, yielding identical performance. On the other hand, setting $\hat{r}$ too low would also undermine the performance of the follower-based adjusted policy, as it would end up prioritizing the creator type with the smaller value of $\brho_i \mathcal{S}_i(t)$.

\begin{figure}[h]
    \centering
    \includegraphics[scale=0.6]{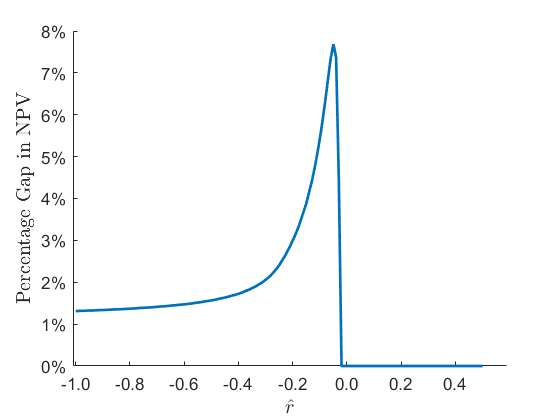}
    \caption{Percentage gap in NPV between myopic and follower-base adjusted policy, over different values of $\hat{r}$, where $(A,S_{(1)},S_{(2)})=(10,1,0)$, $(n_1,n_2)=(5,5)$, $(m_{(1)},m_{(2)})=(10.0,15.0)$, $(\delta,p,q,r)=(1,20,1,2)$}
    \label{Fig:myopic_fba_gap}
\end{figure}

This insight has practical value: platforms can estimate $\hat{r}$ from historical data and implement a simple follower-base adjusted rule to avoid being myopic. 

We next examine how the performance gaps depend on key economic parameters. Figures \ref{Fig:Myopic_delta_r_contour} and \ref{Fig:Myopic_p_q_contour} report contour plots of the NPV differences between the heuristics and the optimal policy over grids of $(\delta,r)$ and $(p,q)$ respectively.

Figure \ref{Fig:Myopic_delta_r_contour} shows that the gap is monotone in the ratio $\delta/r$. As the discount factor $\delta$ decreases or the subscription fee $r$ increases, the gap widens. The intuition is straightforward. Lower discounting increases the present value of future followers, while a higher $r$ raises the revenue from each follower. In both cases, long-run contributions become more valuable, so the forward-looking optimal policy has a larger advantage over myopic allocation. Although the pattern is similar for both heuristics, the follower-base adjusted policy yields smaller gaps than the purely myopic rule, reflecting its partial correction toward dynamic allocation.

Figure \ref{Fig:Myopic_p_q_contour} illustrates a similar monotonicity with respect to growth parameters. As either the direct conversion rate $p$ or the word-of-mouth parameter $q$ rises, the gap increases. Faster conversion means untapped populations are converted more quickly, and stronger word-of-mouth amplifies the value of early investments in follower accumulation. Both effects accentuate the forward-looking gains of the optimal policy relative to static heuristics.

\begin{figure}[h]
    \centering
    \begin{subfigure}[b]{0.49\textwidth}
        \includegraphics[scale=0.6]{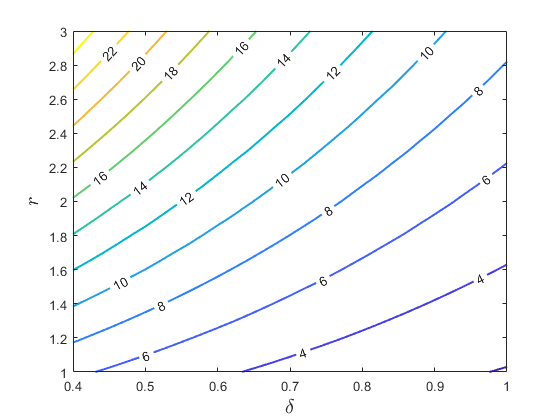}
    \end{subfigure}
    \begin{subfigure}[b]{0.49\textwidth}
        \includegraphics[scale=0.6]{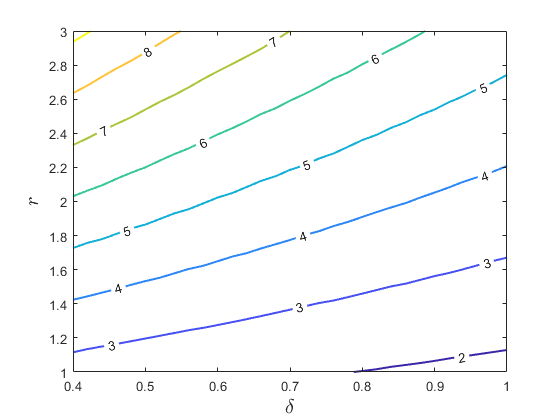}
    \end{subfigure}
 \caption{Gap between the NPV generated by optimal and myopic policy (left) / follower-base adjusted policy (right) over $\delta-r$ gird, where $(A,S_{(1)},S_{(2)})=(10,1,0)$, $(n_1,n_2)=(5,5)$, $(m_{(1)},m_{(2)})=(10.0,15.0)$, $(p,q)=(20,1)$}
 \label{Fig:Myopic_delta_r_contour}
\end{figure}

\begin{figure}[h]
    \centering
    \begin{subfigure}[b]{0.49\textwidth}
        \includegraphics[scale=0.6]{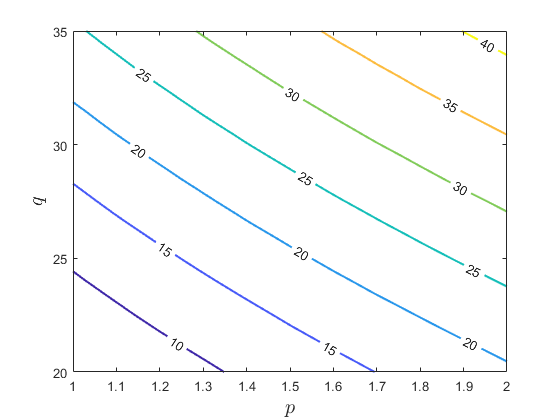}
    \end{subfigure}
    \begin{subfigure}[b]{0.49\textwidth}
        \includegraphics[scale=0.6]{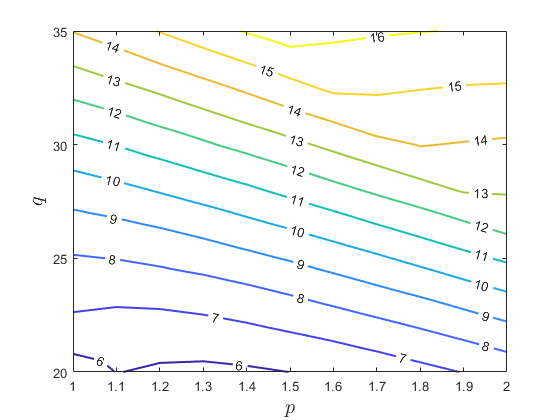}
    \end{subfigure}
 \caption{Gap between the NPV generated by optimal and myopic policy (left) / follower-base adjusted policy (right) over $p-q$ gird, where $(A,S_{(1)},S_{(2)})=(10,1,0)$, $(n_1,n_2)=(5,5)$, $(m_{(1)},m_{(2)})=(10.0,15.0)$, $(\delta,r)=(1,2)$}
 \label{Fig:Myopic_p_q_contour}
\end{figure}

Overall, these results indicate that the value of optimal control is greatest when the future is important (low $\delta$, high $r$) or when growth is rapid (high $p$, high $q$). In such regimes, forward-looking allocation delivers significantly higher revenue than heuristics, though the follower-base adjustment narrows the gap relative to a purely myopic rule.

Finally, we turn to the follower-growth adjusted policy, designed as a practical approximation to the optimal rule. This heuristic replaces the unknown ratio $r/\delta$ in the marginal value function with an empirical estimate $\kappa$. Figure \ref{Fig:robust} reports the percentage NPV gap between this heuristic and the optimal policy as $\kappa$ varies, where the red vertical dashed line marks the value $\kappa=r/\delta$.

Overall, the FGA policy performs close to the optimal benchmark across a wide range of $\kappa$, indicating strong robustness to misspecification. In the baseline case shown in Figure \ref{Fig:robust} (left), the performance gap remains negligible for most values of $\kappa$ and increases only when $\kappa$ is severely underestimated.

In contrast, Figure \ref{Fig:robust} (right) considers a more extreme environment in which the subscription fee $r$ is low and the capability gap between creators, $\rho_1-\rho_2$ is very large. In this case, the heuristic remains stable for moderate values of $\kappa$, but substantial overestimation can lead to a larger performance gap. Intuitively, when current monetization differences dominate and follower growth has relatively limited value, placing excessive weight on the growth component may distort the traffic allocation.

\begin{figure}[h]
    \centering
    \begin{subfigure}[b]{0.49\textwidth}
        \includegraphics[scale=0.32]{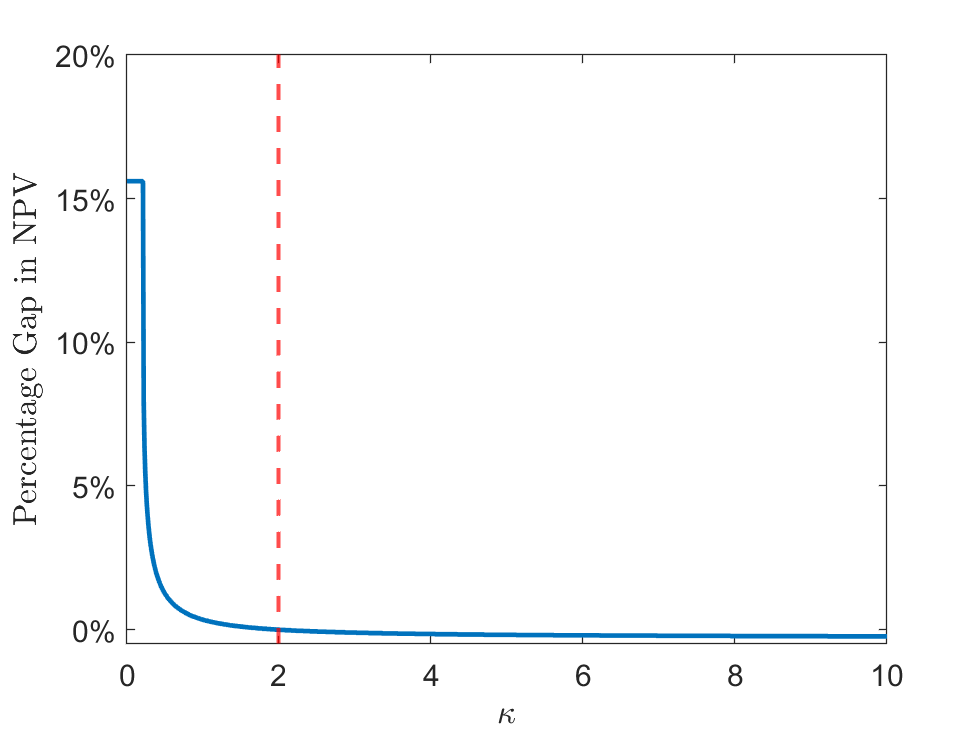}
    \end{subfigure}
    \begin{subfigure}[b]{0.49\textwidth}
        \includegraphics[scale=0.32]{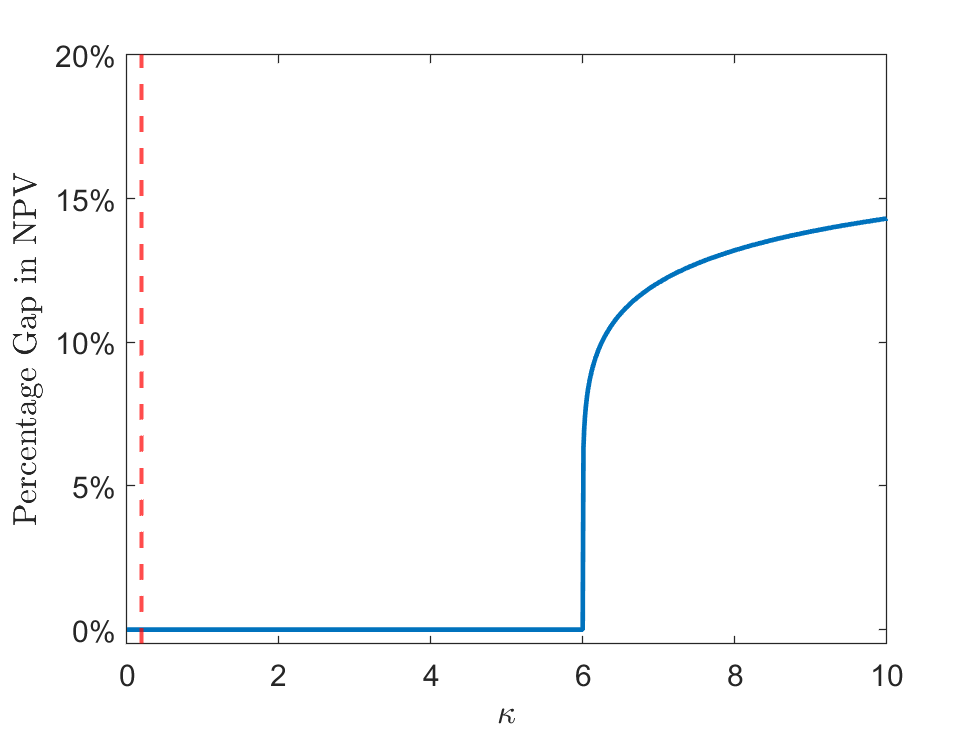}
    \end{subfigure}
 \caption{Percentage gap between optimal NPV and follower-growth adjusted policy against $\kappa$ when using parameters $(A,S_{(1)},S_{(2)})=(10,1,0)$, $(n_1,n_2)=(5,5)$, $(m_{(1)},m_{(2)})=(10.0, 15.0)$, $(\delta,q,p)=(1,20,1)$. In (left), $(\rho_1,\rho_2,r)=(2.0,1.5,2)$, in (right), $(\rho_1,\rho_2,r)=(15.0,1.5,0.2)$}
 \label{Fig:robust}
\end{figure}

Taken together, these results suggest that the FGA policy is broadly robust across different scenarios, with noticeable performance losses arising only under severe misspecification of $\kappa$.

\subsection{Ecosystem Implications: Selective vs. Indiscriminate Growth (RQ2)}

Beyond revenue, we now examine the long-term structural impact of these policies. Does the platform nurture a broad base of creators, or concentrate resources on a few stars? Figure \ref{Fig:fba_ltgr} illustrates the long-term follower bases of the two creators ($S_1$ and $S_2$) under the Follower-Base Adjusted (FBA) policy across different weight parameters $\hat{r}$, compared to the optimal policy benchmarks (dashed lines).

\begin{figure}[h]
    \centering
    \includegraphics[scale=0.75]{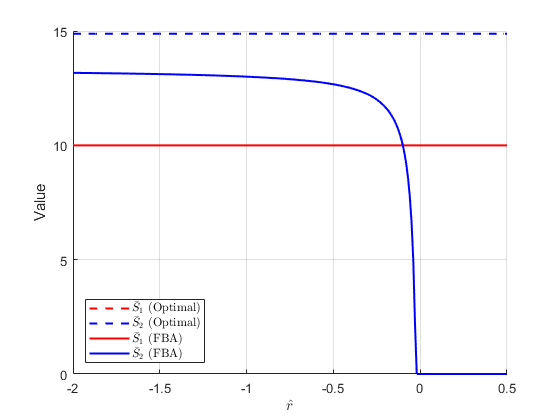}
 \caption{Parameters $(A,S_{(1)},S_{(2)})=(10,1,0)$, $(n_1,n_2)=(5,5)$, $(\brho_{(1)},\brho_{(2)},m_{(1)},m_{(2)})=(10.0, 15.0, 5.0,2.5)$, $(\delta,q,p,r)=(1,20,1,2)$.}
 \label{Fig:fba_ltgr}
\end{figure}

We observe three distinct regimes that highlight the ``selective" nature of the optimal policy:

1. Indiscriminate Growth (Very Negative $\hat{r}$): When the heuristic heavily penalizes large follower bases (left side of Figure 8), it effectively subsidizes the smaller Creator 2. Consequently, Creator 2 receives enough traffic to reach their full natural capacity ($m_2 = 15$). While this fosters a larger ``middle class," it is economically inefficient, as traffic is diverted from the high-capability star (Creator 1) well past the point of diminishing marginal returns.

2. Winner-Take-All (Positive $\hat{r}$): When $\hat{r} \geq 0$, the heuristic reinforces the advantage of the larger Creator 1. In this regime, Creator 2 is starved of traffic and remains at their initial low follower base ($S_2 \approx 0$). This mimics the outcome of the Myopic policy, failing to nurture potentially valuable emerging talent.

3. Selective Disciplined Growth (Optimal Policy): The optimal policy (dashed lines) strikes a precise middle ground. It supports Creator 2, but only up to an "effective ceiling" ($\bar{S}_2 \approx 10$) that is strictly lower than their natural capacity ($m_2 = 15$).

This result confirms that the optimal policy acts as a ``rational gatekeeper." Unlike the FBA heuristic, which tends to swing between the extremes of over-investment (indiscriminate support) and under-investment (winner-take-all), the optimal policy enforces disciplined growth. It ensures that emerging creators are nurtured only to the extent that their marginal long-term value exceeds that of the established star.

\section{Conclusion} 

This paper addresses the critical challenge platforms face in allocating viewer traffic to maximize revenue in a dual-revenue-stream creator economy. We develop a dynamic optimization model that captures the core trade-off between generating immediate advertising revenue and investing in long-term, follower-based monetization. Our analysis yields a characterization of the optimal policy, its long-term consequences for the creator ecosystem, and a practical, data-driven heuristic for implementation.

Our work provides several non-trivial and counter-intuitive insights. First, we find that the optimal policy follows a ``most-valuable-creator-first" rule driven by a forward-looking activation set. This policy is not myopic; under the influence of word-of-mouth effects, it can lead to a sophisticated ``conditional reversal" strategy, where it is optimal to temporarily re-invest all traffic in a lagging creator.

Second, regarding the ecosystem structure, the optimal policy acts as a selective gatekeeper, avoiding the extremes of myopic ``winner-take-all” markets or the inefficient “indiscriminate growth” produced by naive fairness heuristics. It establishes a strict capability threshold that serves as a barrier to entry. Furthermore, for those who pass this screen, it enforces disciplined growth by imposing an optimal follower ceiling. This ensures resources are not wasted on diminishing returns but are perpetually re-allocated to highest-potential assets.

Finally, our numerical analysis shows that while simple, myopic policies can lead to significant revenue loss, a practical, data-driven heuristic can achieve near-optimal results. We show that the common wisdom of ``betting on the winners" is flawed, and the best heuristics are those that run counter to this intuition. Specifically, a ``follower-growth adjusted" policy is remarkably robust to estimation errors and provides a clear path to superior performance.

Our findings offer several implications for both academic research and platform management. For theory, this paper contributes to the literature on platform management by providing a tractable dynamic model that bridges the gap between ad-based and subscription-based monetization systems. For managers, our work provides clear, actionable takeaways:

\begin{itemize}
\item
Shift from Static to Dynamic Metrics: Managers should move beyond simple, backward-looking metrics like raw follower counts and adopt forward-looking indices that capture growth potential.

\item
Beware of False Wisdom: The common strategy of simply rewarding past success can be highly costly. Our work shows that the best strategies are often counter-intuitive, designed to favor future growth potential over incumbency.

  \item The Reality of the Creator ``Middle Class'': While industry discourse hopes follower-based models will unlock a broad creator middle class, a revenue-maximizing platform acts as a strict gatekeeper. By enforcing a capability threshold, the platform curates a highly selective middle class rather than fostering indiscriminate growth.

\item
An Implementable Strategy: The follower-growth adjusted policy is our paper's core practical recommendation. It offers a transparent, robust, and data-driven rule that directly rewards a creator's empirically measured growth momentum, providing a near-optimal and readily implementable strategy.
\end{itemize}

This research can be extended in several directions. Our model makes several simplifying assumptions, each of which presents an opportunity for future inquiry. We assume that creator follower growth is independent; future work could model competitive or complementary dynamics between creators. We also assume that creator capability is fixed; a fruitful avenue would be to allow capability to evolve based on platform support or creator effort. Finally, our platform is a pure revenue-maximizer; future research could explore multi-objective platforms that also aim to optimize for goals like content diversity, novelty, or creator fairness.

%
%
%




\bibliographystyle{informs2014} 
\bibliography{ref_lit.bib} 

\ECSwitch


\ECHead{Proofs of Statements}


\section{Proof of Theorem \ref{Thm:Opt_policy_2Crtr_Ln}}
Under two creator case ($N=2$) and linear follower growth, we have
\begin{align*}
    \frac{d\mathcal{S}_1(t)}{dt}=\mathcal{A}_1(t)p\rho_1\left(1-\frac{S_1(t)}{m_1}\right),\;\frac{d\mathcal{S}_2(t)}{dt}=\mathcal{A}_2(t)p\rho_2\left(1-\frac{S_2(t)}{m_2}\right).
\end{align*}
The current-value Hamiltonian is
\begin{align}
    &\mathcal{H}(\mathcal{S}_1(t),\mathcal{S}_2(t),\mathcal{A}_1(t),\mathcal{A}_2(t),\lambda_1(t),\lambda_2(t),t)\nonumber\\
    &=\rho_1(\mathcal{A}_1(t)+r\mathcal{S}_1(t))+\rho_2(\mathcal{A}_2(t)+r\mathcal{S}_2(t))\nonumber\\
    &+\lambda_1(t)\mathcal{A}_1(t)p\rho_1\left(1-\frac{\mathcal{S}_1(t)}{m_1}\right)+\lambda_2(t)\mathcal{A}_2(t)p\rho_2\left(1-\frac{\mathcal{S}_2(t)}{m_2}\right).\label{eq:Ham_2Crtr_Linear}
\end{align}
Of which, $\lambda_1(t),\lambda_2(t)$ are the current-value costate variables that evolve according to
\begin{align*}
    \dot{\lambda}_i(t)=\delta \lambda_i(t)-\frac{\partial\mathcal{H}}{\partial\mathcal{S}_i}=\delta\lambda_i(t)-\rho_ir+\lambda_i(t)\mathcal{A}_i(t)\cdot\frac{p\rho_i}{m_i}\text{ for }i=1,2.
\end{align*}
Since $\mathcal{A}_1(t)+\mathcal{A}_2(t)=A$ for all $t$, we can substitute $\mathcal{A}_2(t)$ with $A-\mathcal{A}_1(t)$, then \eqref{eq:Ham_2Crtr_Linear} can be written as
\begin{align*}
    \left[\rho_1-\rho_2+\lambda_1(t)p\rho_1\left(1-\frac{\mathcal{S}_1(t)}{m_1}\right)-\lambda_2(t)p\rho_2\left(1-\frac{\mathcal{S}_2(t)}{m_2}\right)\right]\mathcal{A}_1(t)+C:=E(t)
    \mathcal{A}_1(t)+C.
\end{align*}
where $C$ is some term independent of $\mathcal{A}_1(t)$. Therefore, a necessary condition for $\mathcal{A}_1^*(t)$ with the corresponding state trajectory $\mathcal{S}_i^*(t)$ to be optimal is that there exist $\lambda_i(t)$ (for $i=1,2$) such that
\begin{align}
    \mathcal{A}_1^*(t)=\begin{cases}
        A&\text{if }E(t)>0,\\
        0&\text{if }E(t)<0,\\
        \text{any value in }[0,A]&\text{if }E(t)=0.
    \end{cases}\label{eq:Ln_PMP}
\end{align}
Since this is an infinite horizon free-end-point problem, we can impose the transversality condition
\begin{align*}
    \lim_{t\to\infty}e^{-\delta t}\lambda_i(t)=0,    
\end{align*}
combining this with the fact that the Hamiltonian $\mathcal{H}$ is linear in $\mathcal{S}$ at each $t$, and there is no terminal cost, the necessary condition \eqref{eq:Ln_PMP} becomes sufficient.

Differentiating $E(t)$ yields
\begin{align*}
    \dot{E}(t)=\sum_{i=1}^2 (-1)^{i-1}\left[p\rho_i\dot{\lambda}_i(t)-\frac{p\rho_i}{m_i}\left( \dot{\lambda}_i(t)\mathcal{S}_i(t)+\lambda_i(t)\dot{\mathcal{S}}_i(t)\right)\right].
\end{align*}
Of which,
\begin{align*}
    p\rho_i\dot{\lambda}_i(t)-\frac{p\rho_i}{m_i}\left[\dot{\lambda}_i(t)\mathcal{S}_i(t)+\lambda_i(t)\dot{\mathcal{S}}_i(t)\right]
    &=p\rho_i\left[\delta\lambda_i(t)-\rho_i r+\lambda_i(t)\mathcal{A}_i(t)\cdot\frac{p\rho_i}{m_i}\right]\\
    &-\frac{p\rho_i}{m_i}\mathcal{S}_i(t)\left[\delta\lambda_i(t)-\rho_i r+\lambda_i(t)\mathcal{A}_i(t)\cdot\frac{p\rho_i}{m_i}\right]\\
    &-\frac{p\rho_i}{m_i}\lambda_i(t)\mathcal{A}_i(t)\cdot p\rho_i\left(1-\frac{\mathcal{S}_i(t)}{m_i}\right)\\
    &=p\rho_i\left(1-\frac{\mathcal{S}_i(t)}{m_i}\right)(\delta\lambda_i(t)-r\rho_i).
\end{align*}
So we have
\begin{align*}
    \dot{E}(t)=\sum_{i=1}^2(-1)^{i-1}p\rho_i\left(1-\frac{\mathcal{S}_i(t)}{m_i}\right)(\delta\lambda_i(t)-r\rho_i).
\end{align*}
In particular, we notice that
\begin{align*}
    \dot{E}(t)-\delta E(t)=-pr\left[\rho_1^2\left(1-\frac{\mathcal{S}_1(t)}{m_1}\right)-\rho_2^2\left(1-\frac{\mathcal{S}_2(t)}{m_2}\right)+\frac{\delta}{pr}(\rho_1-\rho_2)\right]:=-prD(t).
\end{align*}
We now proceed to discuss different cases of initial state.

\subsection{Case 1: $S_2=g(S_1)$}\label{Subsec:Pf_Opt_policy_2Crtr_Ln_Case1}

If $S_2=g(S_1)$, we show that $\mathcal{A}_1(t)=h(\mathcal{S}_1(t))$ for all $t$ is the optimal policy. First, we plug $\mathcal{S}_2(t)=g(\mathcal{S}_1(t))$ to $D(t)$, which yields
\begin{align*}
    D(t)&=\rho_1^2\left(1-\frac{\mathcal{S}_1(t)}{m_1}\right)-\rho_2^2\left(1-\frac{g(\mathcal{S}_1(t))}{m_2}\right)+\frac{\delta}{pr}(\rho_1-\rho_2)\\
    &=\rho_1^2-\rho_2^2-\frac{\rho_1^2}{m_1}\mathcal{S}_1(t)+\frac{\rho_2^2}{m_2}\left[\frac{m_2\rho_1^2}{m_1\rho_2^2}\mathcal{S}_1(t)-\frac{m_2}{\rho_2^2}\left(\frac{\delta}{pr}(\rho_1-\rho_2)+\rho_1^2-\rho_2^2\right)\right]+\frac{\delta}{pr}(\rho_1-\rho_2)\\
    &=\rho_1^2-\rho_2^2-\frac{\rho_1^2}{m_1}\mathcal{S}_1(t)+\frac{\rho_1^2}{m_1}\mathcal{S}_1(t)-\left[\frac{\delta}{pr}(\rho_1-\rho_2)+(\rho_1^2-\rho_2^2)\right]+\frac{\delta}{pr}(\rho_1-\rho_2)\\
    &=0.
\end{align*}
Then, we show that enforcing $\mathcal{A}_1(t)=h(\mathcal{S}_1(t))$ is the necessary condition to maintain $\mathcal{S}_2(t)=g(\mathcal{S}_1(t))$ for all $t$. If $\mathcal{S}_2(t)=g(\mathcal{S}_1(t))$, then we have
\begin{align*}
    \dot{\mathcal{S}}_2(t)&=\frac{m_2\rho_1^2}{m_1\rho_2^2}\dot{\mathcal{S}}_1(t),\\
    \mathcal{A}_2(t)p\rho_2\left(1-\frac{\mathcal{S}_2(t)}{m_2}\right)
    &=\frac{m_2\rho_1^2}{m_1\rho_2^2}\mathcal{A}_1(t)p\rho_1\left(1-\frac{\mathcal{S}_1(t)}{m_1}\right)\\
    (A-\mathcal{A}_1(t))p\rho_2\left(1-\frac{\mathcal{S}_2(t)}{m_2}\right)&=\frac{m_2\rho_1^2}{m_1\rho_2^2}\mathcal{A}_1(t)p\rho_1\left(1-\frac{\mathcal{S}_1(t)}{m_1}\right)\\
    \mathcal{A}_1(t)&=A\cdot\left[\frac{\rho_2\left(1-\frac{\mathcal{S}_2(t)}{m_2}\right)}{\rho_2\left(1-\frac{\mathcal{S}_2(t)}{m_2}\right)+\frac{m_2\rho_1^3}{m_1\rho_2^2}\left(1-\frac{\mathcal{S}_1(t)}{m_1}\right)}\right]\\
    \mathcal{A}_1(t)&=A\cdot\frac{m_1\rho_2^3(m_2-\mathcal{S}_2(t))}{m_1\rho_2^3(m_2-\mathcal{S}_2(t))+m_2\rho_1^3(m_1-\mathcal{S}_1(t))}.
\end{align*}
By substituting $\mathcal{S}_2(t)$ with $g(\mathcal{S}_1(t))$, we recover $h(\mathcal{S}_1(t))$ in Theorem \ref{Thm:Opt_policy_2Crtr_Ln}. Therefore, given that $\mathcal{S}_2(0)=g(\mathcal{S}_1(0))$, by imposing $\mathcal{A}_1(t)=h(\mathcal{S}_1(t))$, $\mathcal{S}_2(t)=g(\mathcal{S}_1(t))$ for all $t$, which implies $D(t)\equiv 0$, $\dot{E}(t)-\delta E(t)=0$ for all $t$,
\begin{align*}
    e^{-\delta t}\dot{E}(t)&=\delta e^{-\delta t}E(t)\\
    \frac{d}{dt}(e^{-\delta t}E(t))&=0,
\end{align*}
suggesting that $e^{-\delta t}E(t)$ is a constant. We also note that
\begin{align*}
    \lim_{t\to\infty}e^{-\delta t}E(t)&=\lim_{t\to\infty}e^{-\delta t}(\rho_1-\rho_2)+e^{-\delta t}\left[\lambda_1(t)p\rho_1\left(1-\frac{\mathcal{S}_1(t)}{m_1}\right)-\lambda_2(t)p\rho_2\left(1-\frac{\mathcal{S}_2(t)}{m_2}\right)\right]\\
    &=0+p\rho_1\lim_{t\to\infty}e^{-\delta t}\lambda_1(t)\left(1-\frac{\mathcal{S}_1(t)}{m_1}\right)-p\rho_2\lim_{t\to\infty}e^{-\delta t}\lambda_2(t)\left(1-\frac{\mathcal{S}_2(t)}{m_2}\right)\\
    &=0
\end{align*}
as $\mathcal{S}_1(t),\mathcal{S}_2(t)$ are bounded by $m_1,m_2>0$, and $\lim_{t\to\infty}e^{-\delta t}\lambda_i(t)=0$ for $i=1,2$. Therefore, $e^{-\delta t}E(t)\equiv 0$, $E(t)\equiv 0$. Therefore, the allocation $\mathcal{A}_1(t)=h(\mathcal{S}_1(t))$ is optimal when we start on $\mathcal{S}_2=g(\mathcal{S}_1)$.

\subsection{Case 1: $S_2\neq g(S_1)$}

If the initial state $(S_1,S_2)$ is off the line $\mathcal{S}_2=g(\mathcal{S}_1)$, denote the deviation $\Delta(t):=\mathcal{S}_2(t)-g(\mathcal{S}_1(t))$. Note that
\begin{align*}
    D(t)&=\rho_1^2\left(1-\frac{\mathcal{S}_1(t)}{m_1}\right)-\rho_2^2\left(1-\frac{\mathcal{S}_2(t)}{m_2}\right)+\frac{\delta}{pr}(\rho_1-\rho_2)\\
    &=\frac{\rho_2^2}{m_2}(\mathcal{S}_2(t)-g(\mathcal{S}_1(t)),
\end{align*}
so the sign of $D(t)$ is equal to the sign of $\mathcal{S}_2(t)-g(\mathcal{S}_1(t))$, i.e., $\text{sign}(D(t))=\text{sign}(\Delta(t))$.

Assume $\Delta(0)>0$, consider the bang-bang control $\mathcal{A}_1(t)=A$, then $\mathcal{S}_2(t)$ will stay at $S_2$, $\mathcal{S}_1(t)$ will increase monotonically, since $g(\cdot)$ is linear with positive slope, $g(\mathcal{S}_1(t))$ strictly increases in $t$, therefore $\Delta(t)=S_2-g(\mathcal{S}_1(t))$ is strictly decreasing in $t$. Hence, there is at most one hitting time $\tau>0$ such that $\Delta(\tau)=0$.

Suppose $\tau>0$ is finite such that $\Delta(t)>0$ on $(0,\tau)$, $\Delta(\tau)=0$, we then have
\begin{align*}
    \dot{E}(t)-\delta E(t)&=-prD(t)\\
    &=-pr\cdot \frac{\rho_2^2}{m_2}\Delta (t).
\end{align*}
Multiply by $e^{-\delta t}$ and integrate from $t$ to $+\infty$, we obtain
\begin{align*}
    \frac{d}{ds}(e^{-\delta s}E(s))&=-pr\cdot\frac{\rho_2^2}{m_2}e^{-\delta s}\Delta (s)\\
    e^{-\delta t}E(t)&=pr\cdot \frac{\rho_2^2}{m_2}\int_t^{\infty}e^{-\delta s}\Delta (s)\,ds.
\end{align*}
By Theorem \ref{Thm:Opt_policy_2Crtr_Ln}, our proposed policy enforces $\Delta(s)>0$ on $s\in [t,\tau)$, and for $s\geq \tau$, $\Delta(s)\equiv 0$.

Hence, for any $t<\tau$, the integral $\int_t^{\infty}e^{-\delta s}\Delta(s)\,ds$ is strictly positive, suggesting that $E(t)>0$, so the bang-bang control $\mathcal{A}_1(t)=A$ on $(0,\tau)$ is optimal by \eqref{eq:Ln_PMP}. For time $s\geq\tau$, since $\Delta(s)\equiv 0$, $E(t)\equiv 0$ for $t\geq\tau$, by Section \ref{Subsec:Pf_Opt_policy_2Crtr_Ln_Case1} we know the control $\mathcal{A}_1(t)=h(\mathcal{S}_1(t))$ is optimal.

If there does not exist finite $\tau>0$ such that $\Delta(\tau)=0$, then $\mathcal{S}_2(t)>g(\mathcal{S}_1(t))$ for all $t$, same analysis applies where $\Delta(s)>0$ for any $s$, $E(t)>0$ for all $t$, and the optimal policy is $\mathcal{A}_1(t)\equiv A$.

Similarly, when $\Delta(0)<0$, the optimal policy is $\mathcal{A}_1(t)=0$ until $\mathcal{S}_2(t)=g(\mathcal{S}_1(t))$ is reached, from which the optimal policy switches to $\mathcal{A}_1(t)=h(\mathcal{S}_1(t))$.

\section{Proof of Proposition \ref{Prop:Opt_policy_2Crtr_Ln_Psi_Eq_gS1}}
Under the linear growth model, by \ref{eq:psi_i},
\begin{align*}
    \Psi_1(t)-\Psi_2(t)&=\rho_1-\rho_2+\frac{pr}{\delta}\left[\rho_1^2\left(1-\frac{\mathcal{S}_1(t)}{m_1}\right)-\rho_2^2\left(1-\frac{\mathcal{S}_2(t)}{m_2}\right)\right]\\
    &=\frac{pr}{\delta}\left[\frac{\delta}{pr}(\rho_1-\rho_2)+\rho_1^2-\rho_2^2-\frac{\rho_1^2}{m_1}\mathcal{S}_1(t)+\frac{\rho_2^2}{m_2}\mathcal{S}_2(t)\right].
\end{align*}
By the definition of $g(\cdot)$ in Theorem \ref{Thm:Opt_policy_2Crtr_Ln},
\begin{align*}
    \frac{\rho_2^2}{m_2}\left(\mathcal{S}_2(t)-g(\mathcal{S}_1(t))\right)
    =\frac{\delta}{pr}(\rho_1-\rho_2)+\rho_1^2-\rho_2^2-\frac{\rho_1^2}{m_1}\mathcal{S}_1(t)+\frac{\rho_2^2}{m_2}\mathcal{S}_2(t).
\end{align*}
Therefore,
\begin{align*}
    \Psi_1(t)-\Psi_2(t)=\frac{pr}{\delta}\cdot\frac{\rho_2^2}{m_2}\left(\mathcal{S}_2(t)-g(\mathcal{S}_1(t))\right).
\end{align*}
Since
\begin{align*}
    \frac{pr}{\delta}\cdot\frac{\rho_2^2}{m_2}>0,
\end{align*}
the sign of $\Psi_1(t)-\Psi_2(t)$ is exactly the sign of $\mathcal{S}_2(t)-g(\mathcal{S}_1(t))$, this completes the proof.\qed

\section{Proof of Lemma \ref{lem:2Crtr_bass_partial_Sit}}
Recall that
\begin{align*}
    \Phi_i(t)=\rho_i\left[1+\frac{r}{\delta}\left(p\rho_i+q\frac{\mathcal{S}_i(t)}{m_i}\right)\left(1-\frac{\mathcal{S}_i(t)}{m_i}\right)\right].
\end{align*}
Differentiating with respect to $\mathcal{S}_i(t)$ gives
\begin{align*}
    \frac{\partial\Phi_i(t)}{\partial\mathcal{S}_i(t)}&=\rho_i\cdot\frac{r}{\delta}\cdot\frac{\partial}{\partial \mathcal{S}_i(t)}\left[\left(p\rho_i+q\frac{\mathcal{S}_i(t)}{m_i}\right)\left(1-\frac{\mathcal{S}_i(t)}{m_i}\right)\right]\\
    &=\rho_i\cdot\frac{r}{\delta}\left[\frac{q-p\rho_i}{m_i}-\frac{2q}{m_i^2}\mathcal{S}_i(t)\right]\\
    &=\frac{\rho_i r}{\delta m_i}\left(q-p\rho_i-\frac{2q}{m_i}\mathcal{S}_i(t)\right).
\end{align*}
Since $\rho_i r/(\delta m_i)>0$, the sign of $\partial\Phi_i(t)/\partial \mathcal{S}_i(t)$ is the sign of
\begin{align*}
    q-p\rho_i-\frac{2q}{m_i}\mathcal{S}_i(t).
\end{align*}
Thus,
\begin{align*}
    \frac{\partial\Phi_i(t)}{\partial \mathcal{S}_i(t)}\geq 0\Leftrightarrow \mathcal{S}_i(t)\leq\frac{m_i(q-p\rho_i)}{2q}\text{ and }q-p\rho_i\geq 0
\end{align*}
which is equivalent to
\begin{align*}
    \rho_i\leq\frac{q}{p}\text{ and }\mathcal{S}_i(t)\leq\frac{m_i(q-p\rho_i)}{2q}.
\end{align*}
In all other cases, the derivative is strictly negative. Hence, $\Phi_i(t)$ can increase only when the creator is sufficient week $(\rho_i\leq q/p)$ or still has a sufficiently small follower base; once $\mathcal{S}_i(t)>m_i(q-p\rho_i)/(2q)$, $\Phi_i(t)$ is decreasing in $\mathcal{S}_i(t)$.\qed

\newpage
\section{Proof of Theorem \ref{Thm:Opt_policy_2Crtr_Bs}}
We proceed in two steps. First, we construct the two switching curves $g_1$ and $g_2$ and the boundary allocation $\bar{h}$ that appear in Theorem \ref{Thm:Opt_policy_2Crtr_Bs}. The construction uses the switching function and its integral representation to identify the switching boundaries, and then defines $\bar h$ on the upper switching boundary so that the balanced regime preserves $\Phi_1=\Phi_2$. Second, taking this candidate policy as given, we verify the sufficient conditions of the Pontryagin maximum principle: admissibility, concavity of the Hamiltonian in the state variables, and the maximization condition determined by the sign of the switching function. This establishes the policy stated in Theorem \ref{Thm:Opt_policy_2Crtr_Bs}.

\subsection{Construction of the Proposed Policy}
In this section we construct the proposed policy with definitions of $g_1$, $g_2$, $\bar{h}$ that we later prove its optimality. First, we define the following function
\begin{align}
    I(t)=\delta\int_t^{\infty}e^{\delta(t-s)}\left[\Phi_1(\mathcal{S}_1(s))-\Phi_2(\mathcal{S}_2(s))\right]\,ds.\label{eq:E_integral_representation}
\end{align}
We will use \eqref{eq:E_integral_representation} to define the switching curves that appear in the proposed policy.

When $\mathcal{A}_i(t)\equiv A$, (i.e., creator $i$ receives full traffic) with initial condition $x\in[0,m_i]$, we define $\Xi_i(t;x)$ as the unique solution to 
\begin{align}
    \frac{d}{dt}\Xi_i(t;x):=A\cdot\mathcal{F}_{B,i}(\Xi_i(t;x)),\quad \Xi_i(0;x)=x\in[0,m_i],\label{eq:full_alloc_flow}
\end{align}
that is, the state evolution of $\mathcal{S}_i(t)$ under full traffic allocation. Given a target level $y\in[0,m_i]$, we define the hitting time
\begin{align*}
    T_i(x,y):=\inf\{t\geq 0: \Xi_i(t;x)=y\}.
\end{align*}
Note that $\Phi_i(\mathcal{S}_i(t))$ is strictly increasing in $[0,\tilde{S}_i]$, and strictly decreasing on $[\tilde{S}_i,m_i]$, we can define the inverse branches of $\Phi_1$ by
\begin{align*}
    \zeta_1^L&:\Phi_1([0,\tilde{S}_1])\to [0,\tilde{S}_1],\\
    \zeta_1^H&:\Phi_1([\tilde{S}_1,m_1])\to [\tilde{S}_1,m_1],
\end{align*}
where $\zeta_1^{L}(u)$ and $\zeta_1^{H}(u)$ are the unique solutions to $\Phi_1(s_1)=u$, where $\zeta_1^L(u)<\zeta_1^H(u)$. Symmetrically, we define the inverse branches of $\Phi_2$ by
\begin{align*}
    \zeta_2^L&:\Phi_2([0,\tilde{S}_2])\to [0,\tilde{S}_2],\\
    \zeta_2^H&:\Phi_2([\tilde{S}_2,m_2])\to [\tilde{S}_2,m_2],
\end{align*}
where $\zeta_2^L(u)$ and $\zeta_2^H(u)$ are the unique solutions to $\Phi_2(s_2)=u$, where $\zeta_2^L(u)<\zeta_2^H(u)$.

Using these inverse branches, for any $s_2$ such that $\Phi_2(s_2)$ lies in the range of $\Phi_1$, we define the two indifference branches by
\begin{align*}
    g_L(s_2)&:=\zeta_1^L(\Phi_2(s_2)),\\
    g_H(s_2)&:=\zeta_1^H(\Phi_2(s_2)).
\end{align*}
Then, by construction, $\Phi_1(g_L(s_2))=\Phi_2(s_2)$, $\Phi_1(g_H(s_2))=\Phi_2(s_2)$, and whenever both branches exist we have $g_L(s_2)<\tilde{S}_1<g_H(s_2)$. We now extend these results to the full switching curves.

First, fix $s_2\in [0,\tilde{S}_2)$ and consider an initial state $s_1\in[\tilde{S}_1,m_1]$. Suppose $(\mathcal{A}_1,\mathcal{A}_2)=(0,A)$ (i.e., all traffic is allocated to creator 2), until $\mathcal{S}_2$ reaches the level
\begin{align*}
    \hat{s}_2(s_1):=\zeta_2^H(\Phi_1(s_1)).
\end{align*}
At this time the state lies on the upper indifference curve branch $s_1=g_H(s_2)$, after which the policy switches to an allocation such that $\Phi_1=\Phi_2$ is maintained henceforward.

Under this policy, $\mathcal{S}_1(t)\equiv s_1$, $\mathcal{S}_2(t)=\Xi_2(t;s_2)$ for $t\in[0,T_2(s_2,\hat{s}_2(s_1)]$, and $\Phi_1=\Phi_2$ afterwards. Therefore, by \eqref{eq:E_integral_representation}, $I(0)=0$ is equivalent to
\begin{align}
    0=\delta\int_0^{T_2(s_2,\hat{s}_2(s_1))}e^{-\delta t}\left[\Phi_1(s_1)-\Phi_2(\Xi_2(t;s_2))\right]\,dt.\label{eq:barE_eq_0_lower_right}
\end{align}
We define the upper switching curve $g_1(s_2)$ for $s_2\in[0,\tilde{S}_2)$ as the unique solution $s_1\in[\tilde{S}_1,m_1]$ to equation \eqref{eq:barE_eq_0_lower_right} that satisfies $\Phi_1(s_1)>\Phi_2(s_2)$.

Then, fix $s_2\in(\tilde{S}_2,m_2]$ and $s_1\in[0,\tilde{S}_1]$. Suppose $(\mathcal{A}_1,\mathcal{A}_2)=(A,0)$ (i.e., all traffic is allocated to creator 1), until $\mathcal{S}_1$ reaches $g_H(s_2)$. Afterwards, the policy switches to an allocation such that $\Phi_1=\Phi_2$ is maintained henceforward.

Under this policy, $\mathcal{S}_2(t)\equiv s_2$, $\mathcal{S}_1(t)=\Xi_1(t;s_1)$ for $t\in[0,T_1(s_1,g_H(s_2))]$, and $\Phi_1=\Phi_2$ afterwards. Thus by \eqref{eq:E_integral_representation}, $I(0)=0$ is equivalent to
\begin{align}
    0=\delta\int_0^{T_1(s_1,g_H(s_2))}e^{-\delta t}\left[\Phi_1(\Xi_1(t;s_1))-\Phi_2(s_2)\right]\,dt.\label{eq:barE_eq_0_upper_left}
\end{align}
We define the lower switching curve $g_2(s_2)$ for $s_2\in(\tilde{S}_2,m_2]$ as the unique solution to $s_1\in[0,\tilde{S}_1]$ to equation \eqref{eq:barE_eq_0_upper_left} that satisfies $\Phi_1(s_1)<\Phi_2(s_2)$.

Combining above, we define
\begin{align}
    g_1(s_2)&=\begin{cases}
        \text{the solution to }\eqref{eq:barE_eq_0_lower_right}&\text{if }s_2\in[0,\tilde{S}_2),\\
        g_H(s_2)&\text{if }s_2\in[\tilde{S}_2,m_2],
    \end{cases}\\
    g_2(s_2)&=\begin{cases}
        g_L(s_2)&\text{if }s_2\in[0,\tilde{S}_2],\\
        \text{the solution to }\eqref{eq:barE_eq_0_upper_left}&\text{if }s_2\in(\tilde{S}_2,m_2].
    \end{cases}\label{eq:def_g1_g2}
\end{align}
On the set $\{\mathcal{S}_1(t)=g_1(\mathcal{S}_2(t)), \mathcal{S}_2(t)\geq\tilde{S}_2\}$, we have by construction $\Phi_1(\mathcal{S}_1(t))=\Phi_2(\mathcal{S}_2(t))$. For states with $\mathcal{S}_2(t)>\tilde{S}_2$, we allocate traffic so that this equality is preserved onward, i.e., we impose $\dot{\Phi}_1(\mathcal{S}_1(t))=\dot{\Phi}_2(\mathcal{S}_2(t))$, and set $(\mathcal{A}_1(t),\mathcal{A}_2(t))=(\bar h(\mathcal{S}_2(t)),A-\bar h(\mathcal{S}_2(t)))$, where
\begin{align}
    \bar{h}(\mathcal{S}_2(t))
=A\cdot\frac{\rho_2\,\mathcal{F}_{B,2}'(\mathcal{S}_2(t))\,\mathcal{F}_{B,2}(\mathcal{S}_2(t))}
{\rho_1\,\mathcal{F}_{B,1}'(g_1(\mathcal{S}_2(t)))\,\mathcal{F}_{B,1}(g_1(\mathcal{S}_2(t)))
+\rho_2\,\mathcal{F}_{B,2}'(\mathcal{S}_2(t))\,\mathcal{F}_{B,2}(\mathcal{S}_2(t))}.\label{eq:def_h}
\end{align}
With $g_1(\cdot)$, $g_2(\cdot)$, and $\bar h(\cdot)$ defined above, the proposed policy in
Theorem \ref{Thm:Opt_policy_2Crtr_Bs} is
\begin{align}\label{eq:proposed_policy}
(\mathcal{A}_1^*(t),\mathcal{A}_2^*(t))=
\begin{cases}
(A,0),& \text{if } g_2(\mathcal{S}_2(t))\le \mathcal{S}_1(t)<g_1(\mathcal{S}_2(t)),\\
(0,A),& \text{if } \mathcal{S}_1(t)<g_2(\mathcal{S}_2(t)) \text{ or } \mathcal{S}_1(t)>g_1(\mathcal{S}_2(t))\\
& \text{or } \mathcal{S}_1(t)=g_1(\mathcal{S}_2(t)),\ \mathcal{S}_2(t)\le \tilde{\mathcal{S}}_2,\\
(\bar h(\mathcal{S}_2(t)),A-\bar h(\mathcal{S}_2(t))),&
\text{if } \mathcal{S}_1(t)=g_1(\mathcal{S}_2(t)),\ \mathcal{S}_2(t)> \tilde{\mathcal{S}}_2.
\end{cases}
\end{align}
where $\tilde{\mathcal{S}}_2:=m_2(q-p\rho_2)/(2q)$.

We summarize several properties for the policy in the following lemma.
\begin{lemma}\label{lem:proposed_policy_properties}
    By construction of $g_1(\cdot)$ and $g_2(\cdot)$,
    \begin{enumerate}
        \item If $\mathcal{S}_1(t)=g_1(\mathcal{S}_2(t))$, and $\mathcal{S}_2(t)\geq\tilde{S}_2$, then $\Phi_1(\mathcal{S}_1(t))=\Phi_2(\mathcal{S}_2(t))$;
        \item If $\mathcal{S}_1(t)=g_2(\mathcal{S}_2(t))$, and $\mathcal{S}_2(t)\leq\tilde{S}_2$, then $\Phi_1(\mathcal{S}_1(t))=\Phi_2(\mathcal{S}_2(t))$;
        \item If $\mathcal{S}_1(t)=g_1(\mathcal{S}_2(t))$, $\mathcal{S}_2(t)<\tilde{S}_2$, then $\Phi_1(\mathcal{S}_1(t))>\Phi_2(\mathcal{S}_2(t))$ and $I(t)=0$;
        \item If $\mathcal{S}_1(t)=g_2(\mathcal{S}_2(t))$, $\mathcal{S}_2(t)>\tilde{S}_2$, then $\Phi_1(\mathcal{S}_1(t))<\Phi_2(\mathcal{S}_2(t))$ and $I(t)=0$.
    \end{enumerate}
\end{lemma}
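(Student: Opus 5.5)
The four properties follow by unwinding the piecewise definitions of $g_1$ and $g_2$ in \eqref{eq:def_g1_g2}. In each case I split according to whether the relevant argument $s_2=\mathcal{S}_2(t)$ falls in the branch defined by an inverse of $\Phi_1$ (the indifference branches $g_H$, $g_L$) or in the branch defined implicitly by an integral equation (\eqref{eq:barE_eq_0_lower_right} or \eqref{eq:barE_eq_0_upper_left}).

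\textbf{Items 1 and 2.} For item 1, take $\mathcal{S}_2(t)\in[\tilde S_2,m_2]$. There $g_1=g_H=\zeta_1^H\circ\Phi_2$, and since $\zeta_1^H$ is an inverse branch of $\Phi_1$, we get $\Phi_1(g_H(s_2))=\Phi_2(s_2)$ directly. Item 2 is the same argument with $g_2=g_L=\zeta_1^L\circ\Phi_2$ on $[0,\tilde S_2]$. The only care needed is the endpoint $s_2=\tilde S_2$, which belongs to the inverse-branch piece in both definitions. One also needs $\Phi_2(s_2)$ to lie in the range of $\Phi_1$ for these branches to be defined; I would take this as the standing domain convention of the construction.

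\textbf{Items 3 and 4.} For item 3, take $s_2<\tilde S_2$. Then $g_1(s_2)$ is defined as the solution $s_1\in[\tilde S_1,m_1]$ of \eqref{eq:barE_eq_0_lower_right} selected by the requirement $\Phi_1(s_1)>\Phi_2(s_2)$. That inequality therefore holds by definition. For $I(t)=0$, I would compute $I(t)$ along the trajectory generated by the proposed policy \eqref{eq:proposed_policy} from the state $(s_1,s_2)$ at time $t$:
\begin{itemize}
\item On the boundary with $s_2\le\tilde S_2$ the policy is $(0,A)$. So $\mathcal{S}_1\equiv s_1$ and $\mathcal{S}_2(t+u)=\Xi_2(u;s_2)$ until $\mathcal{S}_2$ reaches $\hat s_2(s_1)=\zeta_2^H(\Phi_1(s_1))$. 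At that point $\Phi_1=\Phi_2$ and the state lies on $g_H$, and the balanced allocation $\bar h$ keeps $\Phi_1=\Phi_2$ afterward, so the integrand in \eqref{eq:E_integral_representation} vanishes from then on.
\item Shifting time by $t$ (autonomy of the dynamics), $I(t)$ reduces exactly to the right-hand side of \eqref{eq:barE_eq_0_lower_right}, which is zero by the choice of $s_1$.
\end{itemize}
Item 4 is symmetric: $s_2>\tilde S_2$, $g_2$ is given by \eqref{eq:barE_eq_0_upper_left}, and the policy is $(A,0)$ until $\mathcal{S}_1$ reaches $g_H(s_2)$, followed by the balanced phase.

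\textbf{Main obstacle.} The delicate step is justifying that the trajectory used in the definitions of $g_1$ and $g_2$ is indeed the one generated by \eqref{eq:proposed_policy}. Two things must be checked:
\begin{itemize}
\item Under $(0,A)$ from a point on $g_1$ below $\tilde S_2$, the path does not enter another policy region before reaching $g_H$. The path moves vertically and must meet $g_1$ again only at $\hat s_2(s_1)\ge\tilde S_2$, where $g_1=g_H$.
\item The hitting time $T_2(s_2,\hat s_2(s_1))$ is finite. This holds because $\Xi_2$ increases strictly toward $m_2$ and $\hat s_2<m_2$.
\end{itemize}
With these in place, the integral representation truncates as claimed. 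I would also note that $\bar h$ preserves $\Phi_1=\Phi_2$ by its derivation from $\dot\Phi_1=\dot\Phi_2$ in \eqref{eq:def_h}. Existence and uniqueness of the solutions to \eqref{eq:barE_eq_0_lower_right} and \eqref{eq:barE_eq_0_upper_left} are taken as given by the construction, since the lemma says the properties hold ``by construction''.
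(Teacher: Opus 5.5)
Your proposal is correct and is essentially the paper's own argument, which the paper omits with the remark that the four items follow directly from the piecewise definitions of $g_1,g_2$ and the integral identity \eqref{eq:E_integral_representation}; you unwind exactly this, and you are more careful than the paper about the endpoint $s_2=\tilde S_2$ and about which trajectory $I$ is evaluated along. The paper never checks the trajectory-consistency point you flag for item 3, and you only assert it, but it closes by the sign argument of Lemma \ref{lem:NCrtr_Bs_reversal_sign}: along the vertical $(0,A)$ path, the right-hand side of \eqref{eq:barE_eq_0_lower_right} evaluated at $(s_1,\Xi_2(u;s_2))$ equals $\delta e^{\delta u}\int_u^{T}e^{-\delta t}\bigl[\Phi_1(s_1)-\Phi_2(\Xi_2(t;s_2))\bigr]\,dt<0$ for $u\in(0,T)$, with $T=T_2(s_2,\hat s_2(s_1))$; that right-hand side is strictly decreasing in $s_1$ on the decreasing branch of $\Phi_1$, because the boundary term vanishes since $\Phi_2(\hat s_2(s_1))=\Phi_1(s_1)$; hence $s_1>g_1(\mathcal{S}_2)$ while $\mathcal{S}_2<\tilde S_2$, whereas for $\mathcal{S}_2\in[\tilde S_2,\hat s_2(s_1))$ one has $\Phi_2(\mathcal{S}_2)>\Phi_1(s_1)$ and so $s_1>\zeta_1^H(\Phi_2(\mathcal{S}_2))=g_H(\mathcal{S}_2)$, which together keep the path in the $(0,A)$ region until it reaches $\hat s_2(s_1)$.
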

\textit{Proof. }The results are directly from construction of $g_1(\cdot)$, $g_2(\cdot)$, as well as the identity \eqref{eq:E_integral_representation}, thus omitted.\qed

To establish the admissibility of our proposed policy, note that the allocations $(0,A)$ and $(A,0)$ are clearly feasible. Therefore, it suffices to verify that the interior allocation $(\bar{h}(\cdot),A-\bar{h}(\cdot))$ also satisfies the feasibility constraints, which is shown in the following lemma.
\begin{lemma}\label{lem:final_seg_admissible}
    When $\mathcal{S}_1(t)=g_1(\mathcal{S}_2(t))$, $\mathcal{S}_2(t)>\tilde{S}_2$, the traffic allocation $(\bar{h}(\cdot),A-\bar{h}(\cdot))$ such that $\dot{\Phi}_1(\mathcal{S}_1(t))=\dot{\Phi}_2(\mathcal{S}_2(t))$ is admissible, i.e., $\bar{h}(\cdot)\in[0,A]$.
\end{lemma}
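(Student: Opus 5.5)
Write $\mathcal{S}_1=g_1(\mathcal{S}_2)$ and $\mathcal{S}_2=s_2>\tilde S_2$, so that $s_1:=g_1(s_2)=g_H(s_2)=\zeta_1^H(\Phi_2(s_2))$ lies in $[\tilde S_1,m_1]$. The plan is to show that the numerator and denominator of $\bar h$ in \eqref{eq:def_h} are nonnegative terms of the same sign, with the denominator nonzero. Then $\bar h=A\cdot a/(a+b)$ with $a,b\ge 0$ and $a+b>0$, which gives $\bar h\in[0,A]$ at once.

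First, the defining relation $\dot\Phi_1=\dot\Phi_2$ yields \eqref{eq:def_h}. By the chain rule, $\dot\Phi_i=\Phi_i'(\mathcal{S}_i)\mathcal{A}_i\mathcal{F}_{B,i}(\mathcal{S}_i)$ with $\Phi_i'=\frac{\rho_i r}{\delta}\mathcal{F}_{B,i}'$. Setting $\mathcal{A}_2=A-\mathcal{A}_1$ and solving the linear equation gives
\begin{align*}
\mathcal{A}_1=A\cdot\frac{\rho_2\mathcal{F}'_{B,2}(s_2)\mathcal{F}_{B,2}(s_2)}{\rho_1\mathcal{F}'_{B,1}(s_1)\mathcal{F}_{B,1}(s_1)+\rho_2\mathcal{F}'_{B,2}(s_2)\mathcal{F}_{B,2}(s_2)}.
\end{align*}
The common factor $r/\delta$ cancels. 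By the derivative computation in the proof of Lemma~\ref{lem:2Crtr_bass_partial_Sit}, $\mathcal{F}'_{B,i}(x)=\frac{1}{m_i}(q-p\rho_i-\frac{2q}{m_i}x)$, which matches the displayed form in Theorem~\ref{Thm:Opt_policy_2Crtr_Bs}.

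Second, I sign each factor. We have $\mathcal{F}_{B,i}(x)=(p\rho_i+qx/m_i)(1-x/m_i)\ge 0$ on $[0,m_i]$, and it is strictly positive for $x<m_i$. Since $s_2>\tilde S_2$, $\mathcal{F}'_{B,2}(s_2)<0$. Since $s_1\ge\tilde S_1$, $\mathcal{F}'_{B,1}(s_1)\le 0$. (If $\tilde S_i<0$, the derivative is negative on all of $[0,m_i]$, so the same conclusion holds.) Put $a:=-\rho_2\mathcal{F}'_{B,2}\mathcal{F}_{B,2}\ge0$ and $b:=-\rho_1\mathcal{F}'_{B,1}\mathcal{F}_{B,1}\ge0$; then $\bar h=A\,a/(a+b)\in[0,A]$.

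The main obstacle is the degenerate case $a+b=0$, and I would handle it as follows. If $s_2<m_2$, then $a>0$ and the case cannot occur. If $s_2=m_2$, then $\Phi_2(m_2)=\rho_2<\rho_1\le\Phi_1(s_1)$ for every $s_1$, because $\Phi_1\ge\rho_1$. So equality $\Phi_1=\Phi_2$ cannot hold, and such a state is not on $g_1$; equivalently, $g_H$ is only defined where $\Phi_2(s_2)\ge\rho_1$. More generally, on the domain of $g_H$ we have $\Phi_1(s_1)=\Phi_2(s_2)>\rho_1$, so $\mathcal{F}_{B,1}(s_1)>0$ and $s_1<m_1$. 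We also need $\mathcal{F}'_{B,1}(s_1)<0$, or else $a>0$. Either suffices, so $a+b>0$ whenever the state lies on the curve.

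A remark shows the result is consistent: $\bar h\in[0,A]$ means both creators receive nonnegative traffic and both $\Phi_i$ decrease together. This matches the balanced phase, in which $\Phi_1=\Phi_2$ co-evolve downward.
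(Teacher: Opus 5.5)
Your proof is correct and follows essentially the same route as the paper's: solve $\dot\Phi_1=\dot\Phi_2$ for $\mathcal{A}_1$, then note that both products $\rho_i\mathcal{F}'_{B,i}\mathcal{F}_{B,i}$ are nonpositive because $\mathcal{S}_1\ge\tilde S_1$ and $\mathcal{S}_2>\tilde S_2$. Your extra check that the denominator cannot vanish fills a small gap the paper leaves implicit, and your first observation alone suffices: on the curve $s_2<m_2$ (since $\Phi_2(m_2)=\rho_2<\rho_1\le\Phi_1$), so $a>0$; your later claim $\Phi_2(s_2)>\rho_1$ should read $\ge$, with equality at the endpoint $(m_1,\bar S_2)$, where $b=0$ but $a>0$.
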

\textit{Proof. }Differentiating $\Phi_i(\mathcal{S}_1(t))$ yields
\begin{align*}
    \dot{\Phi}_i(\mathcal{S}_1(t))&=\rho_i\cdot\frac{r}{\delta}\frac{\partial \mathcal{F}_{B,i}(\mathcal{S}_i(t))}{\partial\mathcal{S}_i(t)}\dot{\mathcal{S}}_i(t),\\
    &=\rho_i\cdot\frac{r}{\delta}\frac{\partial \mathcal{F}_{B,i}(\mathcal{S}_i(t))}{\partial\mathcal{S}_i(t)}\mathcal{A}_i(t)\mathcal{F}_{B,i}(t).
\end{align*}
Imposing $\dot{\Phi}_1(\mathcal{S}_1(t))=\dot{\Phi}_2(\mathcal{S}_2(t))$ and apply $\mathcal{A}_2(t)=A-\mathcal{A}_1(t)$ yields 
\begin{align}
    \mathcal{A}_1(t)=A\cdot\frac{\rho_2\frac{\partial\mathcal{F}_{B,2}(\mathcal{S}_2(t))}{\partial\mathcal{S}_2(t)}\mathcal{F}_{B,2}(t)}{\rho_1\frac{\partial\mathcal{F}_{B,1}(\mathcal{S}_1(t))}{\partial\mathcal{S}_1(t)}\mathcal{F}_{B,1}(t)+\rho_2\frac{\partial\mathcal{F}_{B,2}(\mathcal{S}_2(t))}{\partial\mathcal{S}_2(t)}\mathcal{F}_{B,2}(t)}.\label{Eq:Pf_Opt_policy_2Crtr_Bs_Case1_A1t}
\end{align}
Note that $\mathcal{S}_1(t)=g_1(\mathcal{S}_2(t))$ directly implies $\mathcal{S}_1(t)\geq\tilde{S}_1$. Since $\mathcal{S}_i(t)$ is non-decreasing and that $\mathcal{S}_i(t)\geq\tilde{S}_i=m_i(q-p\rho_i)/(2q)$, consider
\begin{align*}
    \frac{\partial\mathcal{F}_{B,i}(\mathcal{S}_i(t))}{\partial\mathcal{S}_i(t)}=\frac{q-p\rho_i}{m_i}-\frac{2q}{m_i^2}\mathcal{S}_i(t)
    =\frac{1}{m_i}\left(q-p\rho_i-\frac{2q}{m_i}\mathcal{S}_i(t)\right)\leq 0.
\end{align*}
Also, $\mathcal{F}_{B,i}(t)\geq 0$ since $\mathcal{S}_i(t)\in[0,m_i]$. Thus,
\begin{align*}
    \rho_i\frac{\partial\mathcal{F}_{B,i}(\mathcal{S}_i(t))}{\partial\mathcal{S}_i(t)}\mathcal{F}_{B,i}(t)\leq 0\text{ for }i=1,2.
\end{align*}
Hence by \eqref{Eq:Pf_Opt_policy_2Crtr_Bs_Case1_A1t}, $\mathcal{A}_1(t)\in[0,A]$ is admissible.\qed

\subsection{Verification of Optimality}
We prove Theorem \ref{Thm:Opt_policy_2Crtr_Bs} by verifying the sufficient conditions in Lemma \ref{lem:PMP_suff_conds}. We first formulate the current-value Hamiltonian and derive the costate dynamics, which lead to a switching function $E(t)$ with property given in \ref{lem:Et_dynamics}. Imposing the transversality condition, we obtain an integral representation of $E(t)$ and show the Hamiltonian is concave in the state variables. We then verify the PMP maximization condition by tracking the sign of $E(t)$ across the regions defined by the switching curves $g_1(\cdot)$ and $g_2(\cdot)$, and characterizing the behavior of $E(t)$ on the boundary set where $E(t)=0$.

The current-value Hamiltonian associated to the problem is
\begin{align}
    &\mathcal{H}(\mathcal{S}_1(t),\mathcal{S}_2(t),\mathcal{A}_1(t),\mathcal{A}_2(t),\lambda_1(t),\lambda_2(t),t)\nonumber\\
    &=\rho_1(\mathcal{A}_1(t)+r\mathcal{S}_1(t))+\rho_2(\mathcal{A}_2(t)+r\mathcal{S}_2(t))\nonumber\\
    &+\lambda_1(t)\mathcal{A}_1(t)\left(p\rho_1+q\cdot\frac{\mathcal{S}_1(t)}{m_1}\right)\left(1-\frac{\mathcal{S}_1(t)}{m_1}\right)\nonumber\\
    &+\lambda_2(t)\mathcal{A}_2(t)\left(p\rho_2+q\cdot\frac{\mathcal{S}_2(t)}{m_2}\right)\left(1-\frac{\mathcal{S}_2(t)}{m_2}\right)\label{eq:Ham_2Crtr_Bass}
\end{align}
Of which, $\lambda_1(t),\lambda_2(t)$ are the current-value costate variables that evolve according to
\begin{align}
    \dot{\lambda}_i(t)=\delta\lambda_i(t)-\frac{\partial\mathcal{H}}{\partial\mathcal{S}_i(t)}=\left[\delta-\mathcal{A}_i(t)\left(\frac{q-p\rho_i}{m_i}-\frac{2q}{m_i^2}\mathcal{S}_i(t)\right)\right]\lambda_i(t)-\rho_ir\text{ for }i=1,2.\label{eq:lambda_i_evolution}
\end{align}
Since $\mathcal{A}_1(t)+\mathcal{A}_2(t)=A$ for all $t$, we can rewrite \eqref{eq:Ham_2Crtr_Bass} as
\begin{align*}
&\Bigg[
\rho_1-\rho_2
+\lambda_1(t)\left(p\rho_1+q\cdot\frac{\mathcal{S}_1(t)}{m_1}\right)\left(1-\frac{\mathcal{S}_1(t)}{m_1}\right)
-\lambda_2(t)\left(p\rho_2+q\cdot\frac{\mathcal{S}_2(t)}{m_2}\right)\left(1-\frac{\mathcal{S}_2(t)}{m_2}\right)
\Bigg]\mathcal{A}_1(t)\\
&+
\Bigg[
\rho_2 A
+\rho_1 r\mathcal{S}_1(t)
+\rho_2 r\mathcal{S}_2(t)
+\lambda_2(t)A\left(p\rho_2+q\cdot\frac{\mathcal{S}_2(t)}{m_2}\right)\left(1-\frac{\mathcal{S}_2(t)}{m_2}\right)
\Bigg]\\
&:=E(t)\mathcal{A}_1(t)+C(t),
\end{align*}
where $C(t)$ is some term independent of $\mathcal{A}_1(t)$. We define
\begin{align*}
    \mathcal{F}_{B,i}(\mathcal{S}_i(t)):=\left(p\rho_i+q\cdot\frac{\mathcal{S}_i(t)}{m_i}\right)\left(1-\frac{\mathcal{S}_i(t)}{m_i}\right),
\end{align*}
then $E(t)$ can be written as
\begin{align*}
    \rho_1-\rho_2+\lambda_1(t)\mathcal{F}_{B,1}(\mathcal{S}_1(t))-\lambda_2(t)\mathcal{F}_{B,2}(\mathcal{S}_2(t)).
\end{align*}
It turns out that we can characterize the evolution of $E(t)$ using the marginal value function $\Phi_i(\mathcal{S}_i(t))$ for each creator $i$, established in the following lemma.
\begin{lemma}\label{lem:Et_dynamics}
    For any admissible policy $\mathcal{A}_i(t)$ and corresponding state trajectories $\mathcal{S}_i(t)$, $i=1,2$, if the costate variables $\lambda_i(t)$ satisfies \eqref{eq:lambda_i_evolution}, then
    \begin{align*}
        \dot{E}(t)=\delta E(t)-\delta\left[\Phi_1(\mathcal{S}_1(t))-\Phi_2(\mathcal{S}_2(t))\right].
    \end{align*}
\end{lemma}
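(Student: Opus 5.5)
The plan is to differentiate the switching function $E(t)=\rho_1-\rho_2+\lambda_1(t)\mathcal{F}_{B,1}(\mathcal{S}_1(t))-\lambda_2(t)\mathcal{F}_{B,2}(\mathcal{S}_2(t))$ directly along an arbitrary admissible trajectory. I will use only the state equation \eqref{eq:dynamics} with $\mathcal{F}=\mathcal{F}_B$ and the costate equation \eqref{eq:lambda_i_evolution}. The key observation is that the product $\lambda_i\mathcal{F}_{B,i}$ has control-independent dynamics, exactly as in the linear case, where the analogous combination produced $p\rho_i(1-\mathcal{S}_i/m_i)(\delta\lambda_i-r\rho_i)$.

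\emph{Step 1 (per-creator product).} Write $\mathcal{F}_{B,i}'(s)=\frac{q-p\rho_i}{m_i}-\frac{2q}{m_i^2}s$, so that \eqref{eq:lambda_i_evolution} reads $\dot\lambda_i=(\delta-\mathcal{A}_i\mathcal{F}_{B,i}')\lambda_i-\rho_i r$. By the chain rule and $\dot{\mathcal{S}}_i=\mathcal{A}_i\mathcal{F}_{B,i}$,
\begin{align*}
\frac{d}{dt}\bigl(\lambda_i\mathcal{F}_{B,i}(\mathcal{S}_i)\bigr)
&=\dot\lambda_i\mathcal{F}_{B,i}+\lambda_i\mathcal{F}_{B,i}'\,\mathcal{A}_i\mathcal{F}_{B,i}\\
&=\delta\lambda_i\mathcal{F}_{B,i}-\rho_i r\,\mathcal{F}_{B,i}.
\end{align*}
The two $\mathcal{A}_i\mathcal{F}_{B,i}'\lambda_i\mathcal{F}_{B,i}$ terms cancel. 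This cancellation is the only substantive point, and it is what makes the identity hold for \emph{any} admissible policy. The trajectories are absolutely continuous, and $\mathcal{F}_{B,i}$ is a polynomial, so the chain rule applies almost everywhere. Hence the identity holds a.e., which suffices for the later integral representation.

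\emph{Step 2 (assemble).} Since $\rho_1-\rho_2$ is constant,
\begin{align*}
\dot E=\delta\bigl(\lambda_1\mathcal{F}_{B,1}-\lambda_2\mathcal{F}_{B,2}\bigr)-r\bigl(\rho_1\mathcal{F}_{B,1}-\rho_2\mathcal{F}_{B,2}\bigr).
\end{align*}
Next substitute $\lambda_1\mathcal{F}_{B,1}-\lambda_2\mathcal{F}_{B,2}=E-(\rho_1-\rho_2)$, which gives
\begin{align*}
\dot E=\delta E-\delta\Bigl[\rho_1\Bigl(1+\tfrac{r}{\delta}\mathcal{F}_{B,1}\Bigr)-\rho_2\Bigl(1+\tfrac{r}{\delta}\mathcal{F}_{B,2}\Bigr)\Bigr].
\end{align*}
By \eqref{eq:phi_i}, the bracket is exactly $\Phi_1(\mathcal{S}_1(t))-\Phi_2(\mathcal{S}_2(t))$.

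There is no real obstacle here: the proof is a short computation. The only care needed is to confirm the sign convention in \eqref{eq:lambda_i_evolution}, namely that $-\partial\mathcal{H}/\partial\mathcal{S}_i$ contributes $-\lambda_i\mathcal{A}_i\mathcal{F}_{B,i}'-\rho_i r$. That sign is what produces the cancellation in Step 1.
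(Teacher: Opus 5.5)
Your proposal is correct and follows the paper's proof essentially line for line. You differentiate $E(t)$, use $\dot{\mathcal{S}}_i=\mathcal{A}_i\mathcal{F}_{B,i}$ and the costate equation so that the $\lambda_i\mathcal{A}_i\mathcal{F}_{B,i}'\mathcal{F}_{B,i}$ terms cancel, then substitute $\lambda_1\mathcal{F}_{B,1}-\lambda_2\mathcal{F}_{B,2}=E-(\rho_1-\rho_2)$ and recognize $\Phi_1-\Phi_2$ from \eqref{eq:phi_i}, just as the paper does (your remark that the identity holds almost everywhere for measurable controls is a harmless refinement the paper leaves implicit).
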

\textit{Proof.} Recall that
\begin{align*}
    E(t)=\rho_1-\rho_2+\lambda_1(t)\mathcal{F}_{B,1}(\mathcal{S}_1(t))-\lambda_2(t)\mathcal{F}_{B,2}(\mathcal{S}_2(t)),
\end{align*}
differentiate $E(t)$ we obtain
\begin{align*}
    \dot{E}(t)=\dot{\lambda}_1(t)\mathcal{F}_{B,1}(\mathcal{S}_1(t))+\lambda_1(t)\dot{\mathcal{F}}_{B,1}(\mathcal{S}_1(t))-\dot{\lambda}_2(t)\mathcal{F}_{B,2}(\mathcal{S}_2(t))-\lambda_2(t)\dot{\mathcal{F}}_{B,2}(\mathcal{S}_2(t)).
\end{align*}
Note that $\dot{\mathcal{F}}_{B,i}(\mathcal{S}_i(t))=\mathcal{F}_{B,i}'(\mathcal{S}_i(t))\cdot\dot{\mathcal{S}}_i(t)=\mathcal{F}_{B,i}'(\mathcal{S}_i(t))\cdot \mathcal{A}_i(t)\mathcal{F}_{B,i}(\mathcal{S}_i(t))$. By \eqref{eq:lambda_i_evolution},
\begin{align*}
    \dot{\lambda}_i(t)=\delta\lambda_i(t)-\rho_i r-\lambda_i(t)\mathcal{A}_i(t)\mathcal{F}_{B,i}'(\mathcal{S}_i(t)).
\end{align*}
Therefore,
\begin{align*}
    \dot{E}(t)&=\sum_{i=1}^2(-1)^{i+1}\Big[\delta\lambda_i(t)\mathcal{F}_{B,i}(\mathcal{S}_i(t))-\rho_i r\mathcal{F}_{B,i}(\mathcal{S}_i(t))\\
    &-\lambda_i(t)\mathcal{A}_i(t)\mathcal{F}_{B,i}'(\mathcal{S}_i(t))\mathcal{F}_{B,i}(\mathcal{S}_i(t))+\lambda_i(t)\mathcal{F}_{B,i}'(\mathcal{S}_i(t))\mathcal{A}_i(t)\mathcal{F}_{B,i}(\mathcal{S}_i(t))\Big]\\
    &=\sum_{i=1}^2(-1)^{i+1}(\delta\lambda_i(t)-\rho_ir)\mathcal{F}_{B,i}(\mathcal{S}_i(t))\\
    &=\delta (E(t)-(\rho_1-\rho_2))-r(\rho_1\mathcal{F}_{B,1}(\mathcal{S}_1(t))-\rho_2\mathcal{F}_{B,2}(\mathcal{S}_2(t))).
\end{align*}
Using the definition of $\Phi_i(\cdot)$ in \eqref{eq:phi_i}, we obtain
\begin{align*}
    \delta\left[\Phi_1(\mathcal{S}_1(t))-\Phi_2(\mathcal{S}_2(t))\right]=\delta(\rho_1-\rho_2)+r\left[\rho_1\mathcal{F}_{B,1}(\mathcal{S}_1(t))-\rho_2\mathcal{F}_{B,2}(\mathcal{S}_2(t))\right].
\end{align*}
Hence,
\begin{align}
    \dot{E}(t)=\delta E(t)-\delta\left[\Phi_1(\mathcal{S}_1(t))-\Phi_2(\mathcal{S}_2(t))\right].\label{eq:dot_Et}
\end{align}\qed

Next, we establish Lemma \ref{lem:lambda_pos}, which shows that under an additional transversality condition on the costate variables, the costates remain strictly positive under any admissible policy.
\begin{lemma}\label{lem:lambda_pos}
    For any admissible policy $\mathcal{A}_i(t)$ and corresponding state trajectories $\mathcal{S}_i(t)$, $i=1,2$, if the costate variables $\lambda_i(t)$ satisfy \eqref{eq:lambda_i_evolution} and the transversality condition $\lim_{t\to\infty}e^{-\delta t}\lambda_i(t)=0$, then $\lambda_i(t)>0$ for all $t>0$.
\end{lemma}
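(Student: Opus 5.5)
The plan is to solve the costate equation \eqref{eq:lambda_i_evolution} explicitly by variation of constants and then use the transversality condition to pin down the integration constant. The equation is linear and scalar in $\lambda_i$:
\begin{align*}
\dot{\lambda}_i(t)=a_i(t)\lambda_i(t)-\rho_i r,\qquad a_i(t):=\delta-\mathcal{A}_i(t)\mathcal{F}_{B,i}'(\mathcal{S}_i(t)).
\end{align*}
Along any admissible trajectory, $\mathcal{A}_i(t)\in[0,A]$ and $\mathcal{S}_i(t)\in[0,m_i]$. The derivative $\mathcal{F}_{B,i}'(s)=\frac{1}{m_i}\bigl(q-p\rho_i-\frac{2q}{m_i}s\bigr)$ is bounded on $[0,m_i]$. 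Hence $a_i$ is bounded and measurable, and $\lambda_i$ is well defined.

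Set $\Lambda_i(t):=\exp\bigl(-\int_0^t a_i(u)\,du\bigr)>0$. Then $\frac{d}{dt}\bigl(\Lambda_i(t)\lambda_i(t)\bigr)=-\rho_i r\Lambda_i(t)$. Integrating from $t$ to $T$ gives
\begin{align*}
\Lambda_i(t)\lambda_i(t)=\Lambda_i(T)\lambda_i(T)+\rho_i r\int_t^T\Lambda_i(s)\,ds.
\end{align*}
Now let $T\to\infty$. The aim is to show that the boundary term $\Lambda_i(T)\lambda_i(T)$ vanishes. This yields the representation
\begin{align*}
\lambda_i(t)=\rho_i r\int_t^\infty \exp\Bigl(-\int_t^s a_i(u)\,du\Bigr)ds,
\end{align*}
whose integrand is strictly positive. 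Therefore $\lambda_i(t)>0$, provided the integral converges.

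\textbf{Main obstacle.} The difficulty is linking $\Lambda_i(T)$ to $e^{-\delta T}$, since the transversality condition only controls $e^{-\delta T}\lambda_i(T)$. I would write $\Lambda_i(T)=e^{-\delta T}\exp\bigl(\int_0^T\mathcal{A}_i\mathcal{F}_{B,i}'(\mathcal{S}_i)\,du\bigr)$. The key observation is that
\begin{align*}
\mathcal{A}_i\mathcal{F}_{B,i}'(\mathcal{S}_i)=\frac{\dot{\mathcal{S}}_i\,\mathcal{F}_{B,i}'(\mathcal{S}_i)}{\mathcal{F}_{B,i}(\mathcal{S}_i)}=\frac{d}{du}\log\mathcal{F}_{B,i}(\mathcal{S}_i(u))
\end{align*}
whenever $\mathcal{F}_{B,i}(\mathcal{S}_i)>0$. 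Hence $\exp\bigl(\int_0^T\mathcal{A}_i\mathcal{F}_{B,i}'\,du\bigr)=\mathcal{F}_{B,i}(\mathcal{S}_i(T))/\mathcal{F}_{B,i}(S_i)$. This ratio is bounded above by $\max_{[0,m_i]}\mathcal{F}_{B,i}/\mathcal{F}_{B,i}(S_i)<\infty$ whenever $S_i<m_i$; note that $\mathcal{F}_{B,i}(0)=p\rho_i>0$. Consequently $|\Lambda_i(T)\lambda_i(T)|\le C e^{-\delta T}|\lambda_i(T)|\to0$.

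The same identity gives $\Lambda_i(s)/\Lambda_i(t)=e^{-\delta(s-t)}\mathcal{F}_{B,i}(\mathcal{S}_i(s))/\mathcal{F}_{B,i}(\mathcal{S}_i(t))$. This quantity is integrable in $s$, which settles convergence, and it is positive for $s\ge t$. The degenerate case $S_i=m_i$ (so $\mathcal{S}_i\equiv m_i$ and $\mathcal{F}_{B,i}\equiv0$) is handled separately. There $a_i(t)\ge\delta-A|\mathcal{F}_{B,i}'(m_i)|$. More directly, using the bound $|\mathcal{F}_{B,i}'|\le K$ only when needed, I would note that the explicit solution $\lambda_i(t)=\lambda_i(0)\Lambda_i(t)^{-1}-\rho_i r\int_0^t\Lambda_i(s)/\Lambda_i(t)\,ds$ with $a_i$ constant forces the unique transversal solution $\lambda_i=\rho_i r/a_i$ when $a_i>0$. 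I expect this boundary case, or ensuring $a_i$ has the right sign there, to need a small additional argument or a standing assumption $S_i<m_i$.
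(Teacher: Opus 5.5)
Your proposal is correct and follows essentially the same route as the paper's proof. Both use the integrating factor, the identity $\mathcal{A}_i\mathcal{F}_{B,i}'(\mathcal{S}_i)=\frac{d}{dt}\log\mathcal{F}_{B,i}(\mathcal{S}_i(t))$ to bound $e^{\delta T}\Lambda_i(T)$, transversality to remove the boundary term, and the positive integral representation of $\lambda_i(t)$. The degenerate case $S_i=m_i$, which the paper implicitly excludes by assuming $\mathcal{S}_i(t)<m_i$, is easier than you expect, and your ``$a_i$ constant'' argument is not needed (and does not hold in general). There $\mathcal{F}_{B,i}'(m_i)=-(q+p\rho_i)/m_i<0$, so $a_i(t)\ge\delta$ and $\Lambda_i(s)/\Lambda_i(t)\le e^{-\delta(s-t)}$, and the same argument goes through directly.
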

\textit{Proof. }Recall that
\begin{align*}
    \dot{\lambda}_i(t)=\left[\delta-\mathcal{A}_i(t)\left(\frac{q-p\rho_i}{m_i}-\frac{2q}{m_i^2}\mathcal{S}_i(t)\right)\right]\lambda_i(t)-\rho_ir.
\end{align*}
Denote
\begin{align*}
    a_i(t)&:=\delta-\mathcal{A}_i(t)\left(\frac{q-p\rho_i}{m_i}-\frac{2q}{m_i^2}\mathcal{S}_i(t)\right),\\
    M_i(t)&:=\exp\left(-\int_0^t a_i(u)\,du\right)>0.
\end{align*}
So $\dot{\lambda}_i(t)=a_i(t)\lambda_i(t)-\rho_i r$,
\begin{align*}
    \frac{d}{dt}(\lambda_i(t)M_i(t))&=\left[a_i(t)\lambda_i(t)-\rho_i r\right]M_i(t)+\lambda_i(t)(-a_i(t)M_i(t))\\
    &=a_i(t)\lambda_i(t)M_i(t)-\rho_i rM_i(t)-a_i(t)\lambda_i(t)M_i(t)\\
    &=-\rho_i rM_i(t).
\end{align*}
Therefore, we can express $\lambda_i(t)$ via
\begin{align*}
    \lambda_i(T)M_i(T)-\lambda_i(t)M_i(t)&=-\rho_i r\int_t^T M_i(s)\,ds\\
    \lambda_i(t)&=\frac{\lambda_i(T)M_i(T)}{M_i(t)}+\frac{\rho_i r}{M_i(t)}\int_t^T M_i(s)\,ds.
\end{align*}
We next show that $e^{\delta T}M_i(T)$ is bounded above, consider
\begin{align*}
    M_i(T)&=\exp\left(-\int_0^T a_i(u)\,du\right)\\
    &=\exp\left(-\int_0^T \left[\delta-\mathcal{A}_i(t)\mathcal{F}_{B,i}'(\mathcal{S}_i(t))\right]\,dt\right)\\
    &=e^{-\delta T}\exp\left(\int_0^T \mathcal{A}_i(t)\mathcal{F}_{B,i}'(\mathcal{S}_i(t))\,dt\right).
\end{align*}
For all finite $t$, $\mathcal{S}_i(t)\in[0,m_i)$ and $\mathcal{F}_{B,i}(\mathcal{S}_i(t))>0$, so $\log\mathcal{F}_{B,i}(\mathcal{S}_i(t))$ is well defined, we then differentiate $\log\mathcal{F}_{B,i}(\mathcal{S}_i(t))$ to obtain
\begin{align*}
    \frac{d}{dt}\log\mathcal{F}_{B,i}(\mathcal{S}_i(t))=\frac{\mathcal{F}_{B,i}'(\mathcal{S}_i(t))}{\mathcal{F}_{B,i}(\mathcal{S}_i(t))}\mathcal{A}_i(t)\mathcal{F}_{B,i}(\mathcal{S}_i(t))=\mathcal{F}_{B,i}'(\mathcal{S}_i(t))\mathcal{A}_i(t).
\end{align*}
Hence,
\begin{align*}
    M_i(T)&=e^{-\delta T}\exp\left(\int_0^T \frac{d}{dt}\log \mathcal{F}_{B,i}(\mathcal{S}_i(t))\,dt\right)\\
    &=e^{-\delta T}\frac{\mathcal{F}_{B,i}(\mathcal{S}_i(T))}{\mathcal{F}_{B,i}(\mathcal{S}_i(0))}.
\end{align*}
Since $\mathcal{S}_i(t)$ is bounded above, and $M_i(T)>0$, $\frac{\mathcal{F}_{B,i}(\mathcal{S}_i(T))}{\mathcal{F}_{B,i}(\mathcal{S}_i(0))}$ is also bounded above, therefore, there exists $C_i>0$ such that $e^{\delta T}M_i(T)\leq C_i$.

By the transversality condition, and take $T\to\infty$, we have
\begin{align*}
    \lim_{T\to\infty}\lambda_i(T)M_i(T)=\lim_{T\to\infty}(e^{-\delta T}\lambda_i(T))(e^{\delta T}M_i(T))=0.
\end{align*}
So
\begin{align*}
    \lambda_i(t)=\frac{\rho_i r}{M_i(t)}\int_t^{\infty}M_i(s)\,ds>0.
\end{align*}
\qed

We now state a sufficient condition that we will use to verify the optimality of our proposed policy. 
\begin{lemma}\label{lem:PMP_suff_conds}
    If there exist costate variables $\lambda_i(t)$ (for $i=1,2$) such that 1) the transversality condition $\lim_{t\to\infty}e^{-\delta t}\lambda_i(t)=0$ is satisfied; 2) the Hamiltonian $\mathcal{H}$ is concave in $\mathcal{S}$ at each $t$; 3) $\mathcal{A}_1^*(t),\mathcal{A}_2^*(t)$ are admissible, so $\mathcal{A}_2^*(t)=A-\mathcal{A}_1^*(t)$, and $\mathcal{A}_1^*(t)$ satisfies
    \begin{align}
        \mathcal{A}_1^*(t)=\begin{cases}
            A&\text{if }E(t)>0,\\
            0&\text{if }E(t)<0,\\
            \text{any value in }[0,A]&\text{if }E(t)=0,
        \end{cases}\label{eq:Bs_PMP}
    \end{align}then $\mathcal{A}_i^*(t)$ and associated state trajectory $\mathcal{S}_i^*(t)$ are optimal.
\end{lemma}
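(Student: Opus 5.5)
The plan is to run the standard Mangasarian–Arrow comparison argument for infinite-horizon problems with discounting, using the current-value costates $\lambda_i(t)$ from \eqref{eq:lambda_i_evolution}. Fix the candidate pair $(\mathcal{A}^*,\mathcal{S}^*)$ satisfying 1)–3), and any admissible competitor $(\mathcal{A},\mathcal{S})$ with the same initial state $\mathcal{S}(0)=(S_1,S_2)$. Write $\pi(\mathcal{S},\mathcal{A})=\sum_i\rho_i(\mathcal{A}_i+r\mathcal{S}_i)$. Since $\dot{\mathcal{S}}_i=\mathcal{A}_i\mathcal{F}_{B,i}(\mathcal{S}_i)$ along both trajectories, we have $\pi=\mathcal{H}-\sum_i\lambda_i\dot{\mathcal{S}}_i$. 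Hence
\begin{align*}
\Pi^*-\Pi=\int_0^\infty e^{-\delta t}\Big[\mathcal{H}(\mathcal{S}^*,\mathcal{A}^*,\lambda)-\mathcal{H}(\mathcal{S},\mathcal{A},\lambda)-\sum_i\lambda_i(\dot{\mathcal{S}}_i^*-\dot{\mathcal{S}}_i)\Big]dt .
\end{align*}

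\textbf{Step 1: integrate the costate term by parts.} On $[0,T]$ we have
\begin{align*}
\int_0^T e^{-\delta t}\lambda_i(\dot{\mathcal{S}}^*_i-\dot{\mathcal{S}}_i)\,dt=\Big[e^{-\delta t}\lambda_i(\mathcal{S}^*_i-\mathcal{S}_i)\Big]_0^T-\int_0^T e^{-\delta t}(\dot\lambda_i-\delta\lambda_i)(\mathcal{S}^*_i-\mathcal{S}_i)\,dt .
\end{align*}
The boundary term vanishes at $t=0$ because the initial states coincide. It vanishes as $T\to\infty$ by the transversality condition 1), since both $\mathcal{S}_i^*$ and $\mathcal{S}_i$ take values in $[0,m_i]$. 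The costate equation \eqref{eq:lambda_i_evolution} gives $\dot\lambda_i-\delta\lambda_i=-\partial\mathcal{H}/\partial\mathcal{S}_i$, evaluated along the candidate. This reduces the claim to the pointwise inequality
\begin{align*}
\mathcal{H}(\mathcal{S}^*,\mathcal{A}^*,\lambda)-\mathcal{H}(\mathcal{S},\mathcal{A},\lambda)\ \ge\ \sum_i\frac{\partial\mathcal{H}}{\partial\mathcal{S}_i}(\mathcal{S}^*,\mathcal{A}^*,\lambda)\,(\mathcal{S}_i-\mathcal{S}_i^*)\cdot(-1)
\end{align*}
for a.e.\ $t$, i.e.\ a supergradient inequality at $\mathcal{S}^*$.

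\textbf{Step 2: obtain the supergradient inequality.} Let $\mathcal{H}^0(\mathcal{S},\lambda)=\max_{\mathcal{A}}\mathcal{H}(\mathcal{S},\mathcal{A},\lambda)=C(t)+A\max\{E(t),0\}$, maximizing over the simplex $\{\mathcal{A}_1+\mathcal{A}_2=A,\ \mathcal{A}_i\ge0\}$. By condition 3), i.e.\ \eqref{eq:Bs_PMP}, $\mathcal{A}^*$ attains this maximum, so $\mathcal{H}(\mathcal{S}^*,\mathcal{A}^*,\lambda)=\mathcal{H}^0(\mathcal{S}^*,\lambda)$. By definition, $\mathcal{H}(\mathcal{S},\mathcal{A},\lambda)\le\mathcal{H}^0(\mathcal{S},\lambda)$. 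It then suffices to show:
\begin{itemize}
\item[(a)] $\mathcal{H}^0(\cdot,\lambda)$ is concave in $\mathcal{S}$, which is how I would read condition 2) (Arrow's form);
\item[(b)] $\partial\mathcal{H}/\partial\mathcal{S}(\mathcal{S}^*,\mathcal{A}^*,\lambda)$ is a supergradient of $\mathcal{H}^0$ at $\mathcal{S}^*$.
\end{itemize}
For (b) I would use an envelope/Danskin argument. For each fixed $\mathcal{A}$, the map $\mathcal{S}\mapsto\mathcal{H}(\mathcal{S},\mathcal{A},\lambda)$ is concave. This holds because it is linear in $\mathcal{S}$ plus $\sum_i\lambda_i\mathcal{A}_i\mathcal{F}_{B,i}(\mathcal{S}_i)$, where $\lambda_i>0$ by Lemma~\ref{lem:lambda_pos}, $\mathcal{A}_i\ge0$, and $\mathcal{F}_{B,i}''=-2q/m_i^2<0$. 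Since $\mathcal{H}(\cdot,\mathcal{A}^*,\lambda)$ is a concave function lying below $\mathcal{H}^0$ and touching it at $\mathcal{S}^*$, any supergradient of the concave function $\mathcal{H}^0$ at $\mathcal{S}^*$ bounds it from above there. Combined with concavity of $\mathcal{H}^0$, this yields the inequality of Step 1. Plugging back in gives $\Pi^*\ge\Pi$.

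\textbf{Main obstacle.} The delicate point is (a). $\mathcal{H}$ is not jointly concave in $(\mathcal{S},\mathcal{A})$, because of the product $\mathcal{A}_i\mathcal{F}_{B,i}(\mathcal{S}_i)$, so plain Mangasarian does not apply. Moreover, $\mathcal{H}^0$ is a pointwise maximum of concave functions, which is not automatically concave. I would first try to check (a) directly. The function $\mathcal{H}^0=C(t)+A\max\{E,0\}$ combines the concave pieces $\rho_2A+r\sum\rho_i\mathcal{S}_i+\lambda_2A\mathcal{F}_{B,2}$ and $\rho_1A+r\sum\rho_i\mathcal{S}_i+\lambda_1A\mathcal{F}_{B,1}$. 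Because these depend on disjoint coordinates apart from the common linear part, I would show concavity along the candidate path where $E(t)=0$. If that fails globally, I would weaken the argument to a local one: it suffices that $\mathcal{H}(\mathcal{S},\mathcal{A},\lambda)\le\mathcal{H}^0(\mathcal{S}^*,\lambda)+\partial_{\mathcal{S}}\mathcal{H}(\mathcal{S}^*,\mathcal{A}^*,\lambda)(\mathcal{S}-\mathcal{S}^*)$ over the reachable box $\prod_i[0,m_i]$. I would verify this on the two linear-in-$\mathcal{A}$ extreme allocations $(A,0)$ and $(0,A)$ separately, using the sign of $E(t)$, and taking the hypothesis ``concave in $\mathcal{S}$'' of condition 2) to mean exactly this property.
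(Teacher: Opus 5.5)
Your Step~1 and the supergradient step (b) are the standard Arrow-type comparison proof. That is exactly the textbook sufficiency theorem the paper's one-line proof cites from Sethi, whose concavity hypothesis (Arrow's form) is on the maximized Hamiltonian $\mathcal{H}^0(\mathcal{S},\lambda)=\max_{\mathcal{A}}\mathcal{H}(\mathcal{S},\mathcal{A},\lambda)$. With 2) read that way, (a) is an assumption and your argument is complete. In (b) you only need that the gradient of a differentiable minorant touching the concave $\mathcal{H}^0$ at an interior point is a supergradient; concavity of $\mathcal{H}(\cdot,\mathcal{A}^*,\lambda)$ plays no role.

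The gap is that you treat (a) as something to be checked or weakened, and both routes in your ``main obstacle'' paragraph fail. Write $\mathcal{H}^0=r\rho_1\mathcal{S}_1+r\rho_2\mathcal{S}_2+A\max\{u(\mathcal{S}_1),v(\mathcal{S}_2)\}$ with $u=\rho_1+\lambda_1\mathcal{F}_{B,1}$ and $v=\rho_2+\lambda_2\mathcal{F}_{B,2}$. Because $u$ and $v$ depend on different coordinates, at a point of $\{E=0\}$ the directional derivative of the max is $d\mapsto\max\{u'd_1,v'd_2\}$. This is sublinear but not superlinear, as the directional derivative of a concave function must be, unless $u'=v'=0$. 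So $\mathcal{H}^0$ has a convex kink there. The balanced arc $\mathcal{M}$ consists of such points, since $\mathcal{F}_{B,2}'(\mathcal{S}_2^*)<0$ on it.

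Your local surrogate fails there too. At a point of $\mathcal{M}$, where $\mathcal{A}^*=(\bar h,A-\bar h)$ with $\bar h>0$, take the state $\mathcal{S}=(\mathcal{S}_1^*,\mathcal{S}_2^*-\epsilon)$ with allocation $(0,A)$. Using $u(\mathcal{S}_1^*)=v(\mathcal{S}_2^*)$,
\[
\mathcal{H}(\mathcal{S},\mathcal{A},\lambda)-\mathcal{H}(\mathcal{S}^*,\mathcal{A}^*,\lambda)-\nabla_{\mathcal{S}}\mathcal{H}(\mathcal{S}^*,\mathcal{A}^*,\lambda)\cdot(\mathcal{S}-\mathcal{S}^*)=\bar h\,\lambda_2\bigl|\mathcal{F}_{B,2}'(\mathcal{S}_2^*)\bigr|\,\epsilon-\frac{A\lambda_2 q}{m_2^2}\,\epsilon^2>0
\]
for small $\epsilon>0$. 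The sign of $E$ gives no leverage, since $E=0$ on $\mathcal{M}$. Declaring 2) to mean this inequality just assumes pointwise what Step~1 needs.

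Nor can 2) be weakened to concavity of $\mathcal{H}(\cdot,\mathcal{A},\lambda)$ for each fixed $\mathcal{A}$, which is what the paper later checks via $\lambda_i\mathcal{A}_i\mathcal{F}_{B,i}''\le 0$. By Lemma~\ref{lem:lambda_pos} that property holds for every candidate satisfying 1), and the statement then becomes false. Take $S_2=0$ and parameters with $\phi_2=\rho_2(1+rp\rho_2/\delta)<\rho_1$ (e.g.\ $\rho_1=2$, $\rho_2=1$, $p=0.1$, $r=\delta=1$), and the candidate $\mathcal{A}^*\equiv(A,0)$. With its transversal costates, Lemma~\ref{lem:Et_dynamics} gives $E(t)=\delta\int_t^\infty e^{\delta(t-s)}[\Phi_1(\mathcal{S}_1^*(s))-\phi_2]\,ds\ge\rho_1-\phi_2>0$. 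So 1), 3) and the per-$\mathcal{A}$ form of 2) all hold. Yet this candidate earns at most $\rho_1(A+rm_1)/\delta$. Giving all traffic to creator 2 instead earns arbitrarily much as $m_2\to\infty$ with $q/m_2>2\delta/A$ fixed, and $\phi_2$ does not depend on $q$ or $m_2$.

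So your argument proves the lemma only with 2) stated explicitly as concavity of $\mathcal{H}^0$ on the state space, and the attempts to verify (a) or a local substitute should be dropped. Be aware that, by the computation above, this hypothesis fails at every time the candidate sits on the balanced arc $\mathcal{M}$.
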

\textit{Proof. }Since the problem is formulated over an infinite horizon with a free terminal state, by \cite{sethi2021optimal} (3.99), the sufficiency condition can be extended to infinite horizon by including the transversality condition $\lim_{t\to\infty}e^{-\delta t}\lambda_i(t)=0$ for $i=1,2$, while the concavity condition and the PMP necessary condition \eqref{eq:Bs_PMP} are still required as in finite horizon case.\qed

We now proceed to verify that the proposed policy \eqref{eq:proposed_policy} satisfies the three conditions in the preceding sufficiency lemma. Fix costate variables $\lambda_i(t)$, $i=1,2$ that satisfy the costate dynamics \eqref{eq:lambda_i_evolution} and the transversality condition
\begin{align*}
    \lim_{t\to\infty}e^{-\delta t}\lambda_i(t)=0,\quad i=1,2.
\end{align*}
For such costates, the boundedness of $\mathcal{F}_{B,i}(\mathcal{S}_i(t))$ implies $\lim_{t\to\infty}e^{-\delta t}E(t)=0$. By \eqref{eq:dot_Et}, the switching function $E(t)$ coincides with $I(t)$, and admits the representation \eqref{eq:E_integral_representation},
\begin{align*}
    E(t)=\delta\int_t^{\infty}e^{\delta(t-s)}\left[\Phi_1(\mathcal{S}_1(s))-\Phi_2(\mathcal{S}_2(s))\right]\,ds.
\end{align*}
The admissibility of $\mathcal{A}^*(t)$ has already been established by Lemma \ref{lem:final_seg_admissible}. It remains to verify (i) concavity of the Hamiltonian in $\mathcal{S}$ and (ii) the PMP maximization condition \eqref{eq:Bs_PMP}.

Firstly, we verify the concavity condition. Since $\mathcal{S}_i$ enters $\mathcal{H}$ separately across $i$, it suffices to show $\partial^2\mathcal{H}/\partial \mathcal{S}_i(t)^2\le 0$ for each $i$. Note that
\begin{align*}
    \frac{d^2}{d\mathcal{S}_i^2}\mathcal{F}_{B,i}(\mathcal{S}_i)=-\frac{2q}{m_i^2}<0.
\end{align*}
Under any admissible policy, $\mathcal{A}_i(t)\in[0,A]$. Moreover, by Lemma \ref{lem:lambda_pos}, the transversality condition implies $\lambda_i(t)>0$ for all $t>0$. Hence,
\begin{align*}
    \frac{\partial^2\mathcal{H}}{\partial\mathcal{S}_i(t)^2}=\lambda_i(t)\mathcal{A}_i(t)\cdot\left(-\frac{2q}{m_i^2}\right)\leq 0.
\end{align*}
so $\mathcal{H}$ is concave in $\mathcal{S}$ at each $t$.

It remains to verify the PMP maximization condition \eqref{eq:Bs_PMP}. Under the transversality condition, we work with $E(t)$ and define
\begin{align*}
    D(t):=\Phi_1(\mathcal{S}_1(t))-\Phi_2(\mathcal{S}_2(t)).
\end{align*}
Then we have
\begin{align*}
    \dot{E}(t)=\delta(E(t)-D(t)),\quad E(t)=\delta\int_t^{\infty}e^{\delta(t-s)}D(s)\,ds.
\end{align*}
We verify \eqref{eq:Bs_PMP} by tracking the sign of $E(t)$ across the regions determined by the switching curves $g_1(\cdot)$, $g_2(\cdot)$. Define the three sets
\begin{align*}
    \mathcal{R}_+&:=\{(s_1,s_2): g_2(s_2)\leq s_1<g_1(s_2)\},\\
    \mathcal{R}_-&:=\{(s_1,s_2): s_1<g_2(s_2)\text{ or }s_1>g_1(s_2)\text{ or }s_1=g_1(s_2),s_2\leq\tilde{S}_2\},\\
\mathcal{M}&:=\{(s_1,s_2): s_1=g_1(s_2),s_2> \tilde{S}_2\}.
\end{align*}
The candidate policy chooses $(A,0)$ on $\mathcal R_+$, $(0,A)$ on $\mathcal R_-$, and $(\bar h(s_2),A-\bar h(s_2))$ on $\mathcal M$. First, if the trajectory is on $\mathcal M$, then by Lemma \ref{lem:proposed_policy_properties}(1) we have $D(t)=0$. Moreover, $\bar h$ is defined so that $\dot D(t)=0$, hence the trajectory remains on $\mathcal M$ and $D(t)\equiv 0$ thereafter. By the integral representation of $E(t)$, this implies $E(t)\equiv 0$ on $\mathcal M$. Therefore the PMP condition allows any feasible control there, including $(\bar h(s_2),A-\bar h(s_2))$.

Next, consider the boundary $\{ \mathcal S_1(t)=g_1(\mathcal S_2(t)),\ \mathcal S_2(t)<\tilde S_2\}$. By Lemma \ref{lem:proposed_policy_properties}(3), we have $E(t)=0$ and $D(t)>0$, so
\[
\dot E(t)= -\delta D(t)<0.
\]
Hence $E(t)$ crosses zero with negative slope, which matches the switch from $(A,0)$ to $(0,A)$ at this boundary.

Similarly, on the boundary $\{ \mathcal S_1(t)=g_2(\mathcal S_2(t)),\ \mathcal S_2(t)>\tilde S_2\}$, Lemma \ref{lem:proposed_policy_properties}(4) gives $E(t)=0$ and $D(t)<0$, so
\[
\dot E(t)= -\delta D(t)>0.
\]
Hence $E(t)$ crosses zero with positive slope, which matches the switch from $(0,A)$ to $(A,0)$ at this boundary.

Finally, $E(t)$ is continuous, so its sign can change only when it hits zero. By construction, the candidate trajectory can hit $E(t)=0$ only on the three sets above: the balanced boundary $\mathcal M$, the upper switching boundary with $\mathcal S_2<\tilde S_2$, and the lower switching boundary with $\mathcal S_2>\tilde S_2$. On $\mathcal M$, $E(t)$ remains equal to zero; on the other two boundaries, the sign change direction is determined by the sign of $\dot E(t)$ as shown above. Therefore, between consecutive hitting times of these sets, the sign of $E(t)$ is constant. On $\mathcal R_+$, the candidate policy chooses $(A,0)$ consistently with the case $E(t)>0$; on $\mathcal R_-$, it chooses $(0,A)$ consistently with the case $E(t)<0$ in the interior; and on the switching sets where $E(t)=0$, it selects a feasible control. Hence the candidate policy satisfies the PMP maximization condition \eqref{eq:Bs_PMP}.

In summary, the proposed policy satisfies three conditions in Lemma \ref{lem:PMP_suff_conds}, we complete the verification of optimality.

\section{Proof of Proposition \ref{Prop:2Crtr_Long_base}}
We prove the linear and Bass growth cases separately. 

\subsection{Linear Growth}
Under linear growth, recall that by Proposition \ref{Prop:Opt_policy_2Crtr_Ln_Psi_Eq_gS1}, the switching line $\mathcal{S}_2=g(\mathcal{S}_1)$ is equivalent to $\Psi_1=\Psi_2$, and the optimal policy allocates all traffic to the creator with the larger $\Psi_i$, while on the switching line it allocates traffic so as to maintain $\Psi_1=\Psi_2$. 

We now begin to prove Proposition \ref{Prop:2Crtr_Long_base} Part (b) under linear growth. We first characterize when creator 2 ever receives positive traffic. If creator 2 is inactive, then $\mathcal{S}_2\equiv S_2$, so $\Psi_2(t)\equiv \Psi_2(0)$. Under full allocation to creator 1, $\Psi_1(t)$ decreases continuously to $\lim_{t\to\infty}\Psi_1(t)=\rho_1$. Hence, creator 2 is eventually activated if and only if its constant marginal value while inactive is strictly above this lower limit, i.e., $\Psi_2(0)>\rho_1$.

Indeed, if $\Psi_2(0)\leq\rho_1$, then $\Psi_1(t)\geq\rho_2\geq \Psi_2(0)$ for all $t$, so the switching condition $\Psi_1=\Psi_2$ is never reached and creator 2 never receives traffic. Conversely, if $\Psi_2(0)>\rho_1$, then either $\Psi_2(0)\geq\Psi_1(0)$, in which case creator 2 is served immediately, or $\Psi_2(0)<\Psi_1(0)$, in which case creator 1 is served first and $\Psi_1(t)\downarrow \rho_1<\Psi_2(0)$, so by continuity there is a finite time at which $\Psi_1=\Psi_2$, after which creator 2 receives positive traffic.

Therefore, the exact threshold is characterized by $\Psi_2(0)=\rho_1$, namely
\begin{align*}
    \rho_2\left[1+\frac{pr}{\delta}\cdot \rho_2\left(1-\frac{S_2}{m_2}\right)\right]=\rho_1,
\end{align*}
solving the quadratic equation yields
\begin{align*}
    \rho_2^{\,\min}=\rho_{2,\text{linear}}^{\,\min}=\frac{-1+\sqrt{1+4\rho_1\cdot\frac{pr}{\delta}\left(1-\frac{S_2}{m_2}\right)}}{2\cdot\frac{pr}{\delta}\left(1-\frac{S_2}{m_2}\right)}.
\end{align*}
To prove the monotonicity in $p$ and $r$, define
\begin{align*}
    a:=\frac{pr}{\delta}\left(1-\frac{S_2}{m_2}\right)>0,
\end{align*}
so that $\rho_{2,\text{linear}}^{\,\min}$ is the positive solution of $a\rho^2+\rho-\rho_1=0$. Differentiating it gives
\begin{align*}
    \frac{\partial\rho}{\partial a}=-\frac{\rho^2}{2a\rho+1}<0.
\end{align*}
Since $a$ is increasing in both $p$ and $r$, it follows that $\rho_{2,\text{linear}}^{\,\min}$ is decreasing in $p$ and $r$.

Then, we proceed to prove Part (a) under linear growth. If $\rho_2\leq \rho_2^{\min}$, then by Part (b) creator 2 never receives traffic, so $\mathcal{A}_1^*(t)\equiv A$, and therefore $\mathcal{S}_1(t)\uparrow m_1$.

Now suppose $\rho_2>\rho_2^{\min}$. Then creator 2 is activated in finite time. By Theorem \ref{Thm:Opt_policy_2Crtr_Ln}, after a finite initial phase the system reaches the switching line $\mathcal{S}_2=g(\mathcal{S}_1)$, and thereafter the optimal allocation is $\mathcal{A}_1^*(t)=h(\mathcal{S}_1(t))$.

From the explicit formula of $h(\cdot)$ in Theorem \ref{Thm:Opt_policy_2Crtr_Ln}, we have $0<h(\mathcal{S}_1)<A$ whenever $\mathcal{S}_1<m_1$ and $\mathcal{S}_2<m_2$. Hence, on the balanced phase,
\begin{align*}
    \dot{\mathcal{S}}_1(t)
    =
    h(\mathcal{S}_1(t))\,p\rho_1\left(1-\frac{\mathcal{S}_1(t)}{m_1}\right)>0
\end{align*}
as long as $\mathcal{S}_1(t)<m_1$. Since $\mathcal{S}_1(t)$ is increasing and bounded above by $m_1$, it has a limit. This limit cannot be strictly below $m_1$, because then the right-hand side above would remain strictly positive near the limit. Therefore $\lim_{t\to\infty}\mathcal{S}_1(t)=m_1$.

Finally, we prove Part (c) under linear growth. Assume $\rho_2>\rho_2^{\min}$, so creator 2 receives positive traffic. Since $\Psi_2(\cdot)$ is strictly decreasing in $\mathcal{S}_2$, there is a unique $\bar{S}_2\in(0,m_2)$ satisfying $\Psi_2(\bar{S}_2)=\rho_1$. Using the formula of $\Psi_2$, this gives
\begin{align*}
    \rho_2\left[1+\frac{pr}{\delta}\rho_2\left(1-\frac{\bar{S}_2}{m_2}\right)\right]=\rho_1,
\end{align*}
hence
\begin{align*}
    \bar{S}_2=\bar{S}_{2,\mathrm{linear}}=
    m_2\left(1-\frac{\rho_1-\rho_2}{\rho_2^2}\cdot\frac{\delta}{pr}\right).
\end{align*}
We next show that $\mathcal{S}_2(t)\leq \bar{S}_2$ for all $t\geq 0$. Since $\rho_2>\rho_2^{\min}$, part (b) implies $\Psi_2(0)>\rho_1$, hence $S_2<\bar{S}_2$. Before creator 2 is activated, $\mathcal{S}_2(t)=S_2$, so the bound is immediate. Whenever creator 2 receives traffic, the allocation continues only while its marginal value is at least the currently active marginal value, and that value is never below $\rho_1$. Thus $\Psi_2(t)\geq \rho_1$ for all finite $t$. Since $\Psi_2(\cdot)$ is strictly decreasing, this implies $\mathcal{S}_2(t)\leq \bar{S}_2$ for all $t\geq 0$.

Finally, once the system enters the balanced phase, $\Psi_1(t)=\Psi_2(t)$. By Part (a), $\lim_{t\to\infty}\Psi_1(t)\to \rho_1$, so also $\Psi_2(t)\to \rho_1$. By uniqueness of the solution to $\Psi_2(s)=\rho_1$, we conclude that $\lim_{t\to\infty}\mathcal{S}_2(t)=\bar{S}_2=\sup_{t\geq 0}\mathcal{S}_2(t)$.

The comparative statics of $\bar{S}_{2,\mathrm{linear}}$ follow directly from the explicit formula that it is increasing in $\rho_2$, $p$, and $r$.

\subsection{Bass Growth}
Now we show the corresponding results under Bass growth. Recall that under the Bass model,
\begin{align*}
    \Phi_i(t)=\rho_i\left[1+\frac r\delta
    \left(p\rho_i+q\frac{\mathcal{S}_i(t)}{m_i}\right)\left(1-\frac{\mathcal{S}_i(t)}{m_i}\right)\right]\text{ for }i=1,2.  
\end{align*}
Also, by Lemma \ref{lem:2Crtr_bass_partial_Sit}, $\Phi_i(\cdot)$ is strictly decreasing on the deceleration branch $[\tilde{S}_i,m_i]$, where
\begin{align*}
    \tilde{S}_i=\frac{m_i(q-p\rho_i)}{2q}.
\end{align*}
First, we prove Proposition \ref{Prop:2Crtr_Long_base} Part (b) under Bass growth, that is, there exists a threshold $\rho_2^{\min}$ such that creator 2 will only receive traffic if $\rho_2> \rho_2^{\min}$. Before creator 2 is first activated, its follower base remains fixed at $S_2$. Hence, under the switching rule in Theorem \ref{Thm:Opt_policy_2Crtr_Bs}, increasing in $\rho_2$ raises creator 2's marginal value path and can only make the activation condition easier to satisfy. In particular, if creator 2 receives positive traffic at some capability level $\rho_2$, then it will also receive positive traffic at any larger capability level.

We next derive an explicit upper bound. A sufficient condition for creator 2 to be activated is $\phi_2>\rho_1$. Indeed, if $\phi_2\geq \phi_1$, then creator 2 is served immediately. If instead $\phi_1>\phi_2>\rho_1$, then creator 1 is initially served, and its marginal value decreases continuously toward $\rho_1$. If $S_2\geq \tilde{S}_2$, Theorem \ref{Thm:Opt_policy_2Crtr_Bs} implies that creator 2 is activated once $\Phi_1$ reaches $\phi_2$, which occurs in finite time because $\rho_1<\phi_2<\phi_1$. If $S_2<\tilde{S}_2$, activation occurs even earlier, since the switching condition is $\Phi_1-\phi_2\leq \eta_2$ with $\eta_2\geq 0$. Thus $\phi_2>\rho_1$ is sufficient for creator 2 to receive positive traffic.

Now write
\begin{align*}
    \phi_2=
    \rho_2\left[1+\frac r\delta\left(p\rho_2+q\frac{S_2}{m_2}\right)\left(1-\frac{S_2}{m_2}\right)\right]=
    a_2\rho_2^2+b_2\rho_2,
\end{align*}
where
\begin{align*}
    a_2:=\frac{pr}{\delta}\left(1-\frac{S_2}{m_2}\right),
    \quad
    b_2:=1+\frac{rq}{\delta}\frac{S_2}{m_2}\left(1-\frac{S_2}{m_2}\right).
\end{align*}
Hence the sufficient condition $\phi_2>\rho_1$ is equivalent to $a_2\rho_2^2+b_2\rho_2-\rho_1>0$. The corresponding positive root provides an explicit upper bound,
\begin{align*}
    \rho_2^{\min}\leq \rho_{2,\mathrm{bass}}^{\,\min}
    =
    \frac{-b_2+\sqrt{b_2^2+4a_2\rho_1}}{2a_2}.
\end{align*}
Equivalently,
\begin{align*}
    \rho_2^{\min}\leq \rho_{2,\mathrm{bass}}^{\,\min}
    =
    \frac{-1+\sqrt{\left[1+\frac{rqS_2}{m_2\delta}\left(1-\frac{S_2}{m_2}\right)\right]^2+4\rho_1\cdot\frac{pr}{\delta}\left(1-\frac{S_2}{m_2}\right)}}{2\cdot\frac{pr}{\delta}\left(1-\frac{S_2}{m_2}\right)}
    -\frac{qS_2}{2pm_2}.
\end{align*}
To establish the monotonicity of this upper bound, define $F(\rho_2;p,r,q):=
    a_2\rho_2^2+b_2\rho_2-\rho_1$. At $\rho_2=\rho_{2,\mathrm{bass}}^{\,\min}$,
\begin{align*}
    \frac{\partial F}{\partial \rho_2}=2a_2\rho_2+b_2>0.
\end{align*}
Moreover,
\begin{align*}
    \frac{\partial F}{\partial p}
    =
    \frac r\delta\left(1-\frac{S_2}{m_2}\right)\rho_2^2>0,
\end{align*}
\begin{align*}
    \frac{\partial F}{\partial r}
    =
    \frac p\delta\cdot \left(1-\frac{S_2}{m_2}\right)\rho_2^2
    +
    \frac{qS_2}{\delta m_2}\left(1-\frac{S_2}{m_2}\right)\rho_2>0,
\end{align*}
and
\begin{align*}
    \frac{\partial F}{\partial q}
    =
    \frac{rS_2}{m_2\delta}\left(1-\frac{S_2}{m_2}\right)\rho_2>0.
\end{align*}
Hence, by differentiation,
\begin{align*}
    \frac{\partial \rho_{2,\mathrm{bass}}^{\,\min}}{\partial p}<0,\quad
    \frac{\partial \rho_{2,\mathrm{bass}}^{\,\min}}{\partial r}<0,\quad
    \frac{\partial \rho_{2,\mathrm{bass}}^{\,\min}}{\partial q}<0.
\end{align*}
Then, we proceed to prove Proposition \ref{Prop:2Crtr_Long_base} Part (a) under Bass growth. If creator 2 never receives traffic, then $\mathcal{A}_1^*(t)\equiv A$, so $\mathcal{S}_1(t)\uparrow m_1$. Now suppose creator 2 is activated. By Theorem \ref{Thm:Opt_policy_2Crtr_Bs}, after at most one conditional reversal the system enters the balanced phase on the curve $\mathcal{S}_1=g_1(\mathcal{S}_2)$, and thereafter the optimal allocation is $(\mathcal{A}_1^*,\mathcal{A}_2^*)=(\bar{h}(\mathcal{S}_2),A-\bar{h}(\mathcal{S}_2))$.

On this balanced phase both creators are on their deceleration branches. Hence, by Lemma \ref{lem:2Crtr_bass_partial_Sit}, $\partial\Phi_i/\partial\mathcal{S}_i<0$ for $i=1,2$. By the closed-form expression of $\bar{h}(\cdot)$ in Theorem \ref{Thm:Opt_policy_2Crtr_Bs}, it follows that $0<\bar{h}(\mathcal{S}_2)<A$ whenever $\mathcal{S}_1<m_1$ and $\mathcal{S}_2<m_2$. Therefore
\begin{align*}
    \dot{\mathcal{S}}_1(t)
    =
    \bar{h}(\mathcal{S}_2(t))\cdot
    \mathcal{F}_B(\mathcal{S}_1(t);\rho_1,m_1)>0
\end{align*}
as long as $\mathcal{S}_1(t)<m_1$. Since $\mathcal{S}_1(t)$ is increasing and bounded above by $m_1$, it follows that $\lim_{t\to\infty}\mathcal{S}_1(t)=m_1$.

Finally, we prove Part (c) of the Proposition. Assume $\rho_2>\rho_2^{\min}$, so creator 2 is activated. By Theorem \ref{Thm:Opt_policy_2Crtr_Bs}, creator 2 enters the long-run balanced regime only on its deceleration branch. Let $\bar{S}_2$ denote the unique point on $[\tilde{S}_2,m_2)$ such that $\Phi_2(\bar{S}_2)=\rho_1$. Since $\Phi_2(\cdot)$ is strictly decreasing on $[\tilde{S}_2,m_2]$, this solution is unique. Solving
\begin{align*}
    \rho_2\left[1+\frac r\delta\left(p\rho_2+q\frac{\bar{S}_2}{m_2}\right)\cdot \left(1-\frac{\bar{S}_2}{m_2}\right)\right]=\rho_1
\end{align*}
gives
\begin{align*}
    \bar{S}_2=\bar{S}_{2,\mathrm{bass}}
    =
    \frac{m_2}{2q}\left[-(p\rho_2-q)+\sqrt{(p\rho_2+q)^2-\frac{4q\delta}{r}\left(\frac{\rho_1}{\rho_2}-1\right)}\right].
\end{align*}
In particular, $\bar{S}_2<m_2$.

We next show that $\mathcal{S}_2(t)\leq \bar{S}_2$ for all $t\geq 0$. If $S_2<\tilde{S}_2$, then $S_2<\bar{S}_2$. If $S_2\geq \tilde{S}_2$ and creator 2 is ever activated, then necessarily $\phi_2>\rho_1$, because otherwise creator 1's marginal value, which decreases only down to $\rho_1$, could never reach creator 2's initial marginal value. Since $\Phi_2(\cdot)$ is strictly decreasing on $[\tilde{S}_2,m_2]$ and $\Phi_2(\bar{S}_2)=\rho_1$, this implies $S_2<\bar{S}_2$.

During any initial phase in which creator 2 receives all traffic, the allocation stops once $\Phi_2$ reaches the currently active marginal value, and that value is always strictly larger than $\rho_1$. Hence creator 2 cannot reach a follower number exceeding $\bar{S}_2$. Once the balanced phase starts, we have $\Phi_2(t)=\Phi_1(t)>\rho_1$ for every finite $t$, because $\mathcal{S}_1(t)<m_1$. Since $\Phi_2(\cdot)$ is decreasing on the deceleration branch, this implies $\mathcal{S}_2(t)\leq \bar{S}_2$ for all $t\geq 0$.

Finally, by Part (a), $\Phi_1(t)\to \rho_1$, and on the balanced phase $\Phi_1(t)=\Phi_2(t)$. Hence $\Phi_2(t)\to \rho_1$, and by uniqueness of the deceleration branch solution to $\Phi_2(s)=\rho_1$, $\lim_{t\to\infty}\mathcal{S}_2(t)=\bar{S}_2=\sup_{t\geq 0}\mathcal{S}_2(t)$.

To establish the comparative statics of $\bar{S}_{2,\mathrm{bass}}$, define
\begin{align*}
    G(s;\rho_2,p,r,q)
    :=
    \rho_2\left[1+\frac r\delta\left(p\rho_2+q\frac s{m_2}\right)\cdot\left(1-\frac s{m_2}\right)\right]-\rho_1.
\end{align*}
At $s=\bar{S}_2$, we have $G(\bar{S}_2;\rho_2,p,r,q)=0$. Since $\bar{S}_2$ lies on the deceleration branch, Lemma \ref{lem:2Crtr_bass_partial_Sit} gives
\begin{align*}
    \frac{\partial G}{\partial s}(\bar{S}_2;\rho_2,p,r,q)<0.
\end{align*}
Moreover, we have
\begin{align*}
    \frac{\partial G}{\partial \rho_2}
    =
    1+\frac r\delta\left(2p\rho_2+q\frac{\bar{S}_2}{m_2}\right)\left(1-\frac{\bar{S}_2}{m_2}\right)>0,
\end{align*}
\begin{align*}
    \frac{\partial G}{\partial p}
    =
    \frac{r\rho_2^2}{\delta}\left(1-\frac{\bar{S}_2}{m_2}\right)>0,
\end{align*}
\begin{align*}
    \frac{\partial G}{\partial r}
    =
    \frac{\rho_2}{\delta} \left(p\rho_2+q\frac{\bar{S}_2}{m_2}\right)\left(1-\frac{\bar{S}_2}{m_2}\right)>0,
\end{align*}
and
\begin{align*}
    \frac{\partial G}{\partial q}
    =
    \frac{r\rho_2\bar{S}_2}{m_2\delta}\left(1-\frac{\bar{S}_2}{m_2}\right)>0.
\end{align*}
Thus,
\begin{align*}
    \frac{\partial \bar{S}_{2,\mathrm{bass}}}{\partial \rho_2}>0,\quad
    \frac{\partial \bar{S}_{2,\mathrm{bass}}}{\partial p}>0,\quad
    \frac{\partial \bar{S}_{2,\mathrm{bass}}}{\partial r}>0,\quad
    \frac{\partial \bar{S}_{2,\mathrm{bass}}}{\partial q}>0.
\end{align*}
This completes the proof. \qed

\section{Proof of Theorem \ref{Thm:Opt_policy_NCrtr_Bs}}
We prove Theorem \ref{Thm:Opt_policy_NCrtr_Bs} in a similar way as the proof of Theorem \ref{Thm:Opt_policy_2Crtr_Bs}. We first derive a creator-specific switching coefficient from the Hamiltonian, then show that on any active set the platform should equalize marginal values, and finally verify recursively that the proposed switching rule maximizes the Hamiltonian at every stage.

As in the proof of Theorem \ref{Thm:Opt_policy_2Crtr_Bs}, we write
\begin{align*}
    \mathcal{F}_{B,i}(\mathcal{S}_i)=\left(p\rho_i+q\cdot\frac{\mathcal{S}_i}{m_i}\right)\left(1-\frac{\mathcal{S}_i}{m_i}\right),\quad \tilde{S}_i=\frac{m_i(q-p\rho_i)}{2q},
\end{align*}
and recall that
\begin{align*}
    \Phi_i(t)=\rho_i\left[1+\frac{r}{\delta}\mathcal{F}_{B,i}(\mathcal{S}_i(t))\right],\quad \phi_i=\Phi_i(S_i).
\end{align*}
Since a creator receives no traffic before being activated, every inactive creator $j$ stays at its initial follower base $S_j$ up to its activation time. Hence its marginal value remains equal to the constant $\phi_j$ throughout the inactive period. This allows us to define the activation threshold $\eta_j$ from the initial state $S_j$.

\subsection{Construction of the Proposed Policy}
We first define the full-traffic flow of creator $j$. For any $x\in[0,m_j]$, let $\Xi_j(t;x)$ be the unique solution to
\begin{align*}
    \frac{d}{dt}\Xi_j(t;x)=A\cdot\mathcal{F}_{B,j}(\Xi_j(t;x)),\;\Xi_j(0;x)=x.
\end{align*}
Next, let $M\subseteq\{1,\dots,N\}$ be a nonempty set, and suppose that at some time its member satisfy
\begin{align*}
    \Phi_i(s_i)=\Phi_{i'}(s_{i'}) \text{ for all }i,i'\in M,
\end{align*}
with $s_i\geq\tilde{S}_i$ for all $i\in M$. Define
\begin{align*}
    \beta_i(s_i):=-\rho_i\mathcal{F}_{B,i}'(s_i)\mathcal{F}_{B,i}(s_i)\text{ for }i\in M.
\end{align*}
Since $s_i\geq\tilde{S}_i$, we have $\mathcal{F}_{B,i}'(s_i)\leq 0$, so $\beta_i(s_i)>0$. Define the balanced allocation on $M$ by
\begin{align*}
    a_i^M(s_M):=A\cdot\frac{\beta_i(s_i)^{-1}}{\sum_{\ell\in M}\beta_{\ell}(s_{\ell})^{-1}}\text{ for }i\in M.
\end{align*}
Of which, $s_M$ is the vector of follower states of creators in the active set $M$. The next lemma shows that this allocation preserves equality of marginal values within the active set.

\begin{lemma}\label{lem:NCrtr_Bs_balanced_alloc}
Let $M\subseteq\{1,\dots,N\}$ be nonempty, and suppose that at time $t_0$ the state
$s_M=(s_i)_{i\in M}$ satisfies
\begin{align*}
\Phi_i(s_i)=\Phi_{i'}(s_{i'})\text{ for all }i,i'\in M,
\quad
s_i\ge \tilde S_i\text{ for all }i\in M.
\end{align*}
If traffic is allocated according to $a^M(\cdot)$, then the resulting trajectory
preserves the equality
\begin{align*}
\Phi_i(t)=\Phi_{i'}(t)\text{ for all }i,i'\in M\text{ for }t\geq t_0.
\end{align*}
Moreover, $a^M(\cdot)$ is admissible.
\end{lemma}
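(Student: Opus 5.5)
Differentiating each marginal value along the dynamics \eqref{eq:dynamics} turns the invariance claim into a statement about equal time derivatives, exactly as in the proof of Lemma \ref{lem:final_seg_admissible}. Along any trajectory,
\begin{align*}
\dot{\Phi}_i(t)=\rho_i\frac{r}{\delta}\mathcal{F}_{B,i}'(\mathcal{S}_i(t))\,\mathcal{A}_i(t)\,\mathcal{F}_{B,i}(\mathcal{S}_i(t))=-\frac{r}{\delta}\beta_i(\mathcal{S}_i(t))\,\mathcal{A}_i(t).
\end{align*}
Under the allocation $\mathcal{A}_i=a_i^M(s_M)$ this gives
\begin{align*}
\dot{\Phi}_i(t)=-\frac{r}{\delta}\cdot\frac{A}{\sum_{\ell\in M}\beta_\ell(\mathcal{S}_\ell(t))^{-1}},
\end{align*}
and the right-hand side does not depend on $i\in M$. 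Hence $\frac{d}{dt}\bigl(\Phi_i(t)-\Phi_{i'}(t)\bigr)=0$ for all $i,i'\in M$. Since the differences vanish at $t_0$ by hypothesis, they vanish for all $t\ge t_0$. To make this rigorous I will treat the closed-loop system $\dot{\mathcal{S}}_i=a_i^M(\mathcal{S}_M)\mathcal{F}_{B,i}(\mathcal{S}_i)$ as an ODE with locally Lipschitz right-hand side wherever all $\beta_i>0$. That yields a unique solution, and the computation above then applies pointwise.

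Admissibility needs two facts: $a_i^M\ge 0$, and $\sum_{i\in M}a_i^M=A$ with zero traffic given to $j\notin M$. Both are immediate from the normalized form of $a^M$, provided $\beta_i>0$ on $M$. I must also check that $s_i\ge\tilde S_i$ is preserved for all later times. This holds because follower bases are nondecreasing under any nonnegative allocation: $\mathcal{F}_{B,i}\ge 0$ on $[0,m_i]$. So each $\mathcal{S}_i$ stays on the deceleration branch, where $\mathcal{F}_{B,i}'\le 0$ and therefore $\beta_i\ge 0$. Moreover $\mathcal{S}_i(t)\le m_i$, since $m_i$ is an equilibrium of the scalar growth ODE.

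The main obstacle is degeneracy of $\beta_i$. It vanishes when $s_i=\tilde S_i$ (where $\mathcal{F}_{B,i}'=0$) or at $s_i=m_i$, and then $\beta_i^{-1}$ blows up. I will handle the two cases separately.
\begin{itemize}
\item \emph{Case $s_i>\tilde S_i$ and $s_i<m_i$.} The strict inequality $\mathcal{F}_{B,i}'<0$ holds, since $\mathcal{F}_{B,i}'$ is strictly decreasing in $s$ and equals zero only at $\tilde S_i$. Together with $\mathcal{F}_{B,i}>0$ this gives $\beta_i>0$, and the construction is well defined.
\item \emph{Boundary case $s_i=\tilde S_i$ or $s_i=m_i$.} Here I define $a^M$ by continuous extension, writing
\begin{align*}
a_i^M=A\,\frac{\prod_{\ell\neq i}\beta_\ell}{\sum_{k}\prod_{\ell\neq k}\beta_\ell}.
\end{align*}
This form stays in $[0,A]$ and still sums to $A$. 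At the point $\tilde S_i$ all traffic goes to the creators with $\beta=0$, and $\dot\Phi_{i'}=0$ for every $i'\in M$, so equality of the marginal values is still preserved. The instant $\tilde S_i$ is left immediately because $\mathcal{F}_{B,i}>0$ there.
\end{itemize}

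With this extension, the derivative computation goes through at every time. Both conclusions, preservation of $\Phi_i=\Phi_{i'}$ on $M$ and admissibility of $a^M$, then follow.
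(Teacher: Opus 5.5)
Your proposal is correct and follows the paper's proof. Both compute $\dot\Phi_i=-\frac{r}{\delta}\beta_i\,a_i^M=-\frac{rA/\delta}{\sum_{\ell\in M}\beta_\ell^{-1}}$, observe that this is the same for every $i\in M$, and read admissibility off the normalized form of $a^M$.

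Your treatment of the degenerate points goes beyond the paper. The paper simply asserts $\beta_i>0$ whenever $s_i\ge\tilde S_i$, which fails at $s_i=\tilde S_i$ and at $s_i=m_i$, and your product-form extension repairs this whenever at most one $\beta_i$ vanishes. If two or more vanish simultaneously at $t_0$, however, your formula becomes $0/0$, so that non-generic case needs its own convention.
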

\textit{Proof. }Admissibility is immediate from $\beta_i(s_i)>0$, we have $a_i^M(s_M)\geq 0$ and
\begin{align*}
    \sum_{i\in M}a_i^M(s_M)=A.
\end{align*}
Now, under the state dynamics
\begin{align*}
    \dot\Phi_i(t)=\frac{r}{\delta}\rho_i\mathcal F'_{B,i}(\mathcal S_i(t))\dot{\mathcal S}_i(t)=\frac{r}{\delta}\rho_i\mathcal F'_{B,i}(\mathcal S_i(t))a_i^M(\mathcal S_M(t))\mathcal F_{B,i}(\mathcal S_i(t)).
\end{align*}
By the definition of $a_i^M$ and $\beta_i$,
\begin{align*}
    \dot\Phi_i(t)=-\frac{rA/\delta}{\sum_{\ell\in M}\beta_\ell(\mathcal S_\ell(t))^{-1}},
\end{align*}
which is identical for all $i\in M$. Hence all $\Phi_i(t)$ evolve with
the same derivative, so equality is preserved for all $t\geq t_0$.\qed

We now define the activation threshold $\eta_j$. If $S_j\geq \tilde S_j$, creator
$j$ is already on the decreasing branch, and we simply set $\eta_j=0$. Suppose now that $S_j<\tilde S_j$. Along the full-traffic path $t\mapsto \Xi_j(t;S_j)$, the marginal value $\Phi_j(\Xi_j(t;S_j))$ first increases, reaches its maximum at $\tilde{S}_j$, and then decreases. For every level $c\in[\phi_j,\Phi_j(\tilde S_j)]$, define
\begin{align*}
    v_j(c):=\inf\{t\ge 0:\Xi_j(t;S_j)\ge \tilde S_j,\ \Phi_j(\Xi_j(t;S_j))=c\},
\end{align*}
that is, the first time on the decreasing branch at which the marginal value returns to level $c$. Define
\begin{align*}
    \mathcal{E}_j(c):=\delta\int_0^{v_j(c)} e^{-\delta t}\Big[c-\Phi_j(\Xi_j(t;S_j))\Big]\,dt.
\end{align*}
The next lemma gives the existence and uniqueness of this threshold.
\begin{lemma}\label{lem:NCrtr_Bs_gamma_exist}
    Suppose $S_j<\tilde{S}_j$. Then $\mathcal{E}_j(c)$ is continuous and strictly increasing on $[\phi_j,\Phi_j(\tilde{S}_j)]$, with $\mathcal{E}_j(\phi_j)<0$, $\mathcal{E}_j(\Phi_j(\tilde{S}_j))>0$. Hence, there exists a unique level $\bar{c}_j\in(\phi_j,\Phi_j(\tilde{S}_j))$ such that $\mathcal{E}_j(\bar{c}_j)=0$. Setting
    \begin{align*}
        \eta_j:=\bar{c}_j-\phi_j
    \end{align*}
    defines a unique threshold $\eta_j>0$.
\end{lemma}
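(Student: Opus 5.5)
We need to prove that $\mathcal{E}_j$ is continuous and strictly increasing on $[\phi_j,\Phi_j(\tilde S_j)]$, and that it takes a negative value at the left endpoint and a positive value at the right endpoint. The intermediate value theorem then gives $\bar c_j$. The plan is to reparametrize everything through the full-traffic flow $\Xi_j(\cdot;S_j)$.

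\textbf{Step 1: properties of the path.} Since $S_j<\tilde S_j<m_j$ and $\mathcal{F}_{B,j}>0$ on $[0,m_j)$, the map $t\mapsto\Xi_j(t;S_j)$ is $C^1$, strictly increasing, and tends to $m_j$. Write $\tau^*$ for the finite time at which it reaches $\tilde S_j$. By Lemma~\ref{lem:2Crtr_bass_partial_Sit}, $\Phi_j(\Xi_j(t;S_j))$ is strictly increasing on $[0,\tau^*]$ and strictly decreasing on $[\tau^*,\infty)$. Its limit at infinity is $\Phi_j(m_j)=\rho_j$. This limit is strictly below $\phi_j$, because $S_j<m_j$ gives $\mathcal{F}_{B,j}(S_j)>0$.

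\textbf{Step 2: $v_j$ is well defined and regular.} For $c\in[\phi_j,\Phi_j(\tilde S_j)]$, the decreasing branch is a continuous strictly decreasing bijection from $[\tau^*,\infty)$ onto $(\rho_j,\Phi_j(\tilde S_j)]$. This interval contains $c$, so $v_j(c)$ is finite and unique. Moreover, $v_j$ is continuous and strictly decreasing in $c$, with $v_j(\Phi_j(\tilde S_j))=\tau^*$. Since the path derivative is nonzero for $t>\tau^*$, $v_j$ is in fact differentiable for $c<\Phi_j(\tilde S_j)$.

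\textbf{Step 3: continuity and strict monotonicity.} Continuity of $\mathcal{E}_j$ follows because both the integrand and the upper limit are continuous in $c$. To differentiate, apply Leibniz's rule:
\begin{align*}
\mathcal{E}_j'(c)=\delta e^{-\delta v_j(c)}\big[c-\Phi_j(\Xi_j(v_j(c);S_j))\big]v_j'(c)+\delta\int_0^{v_j(c)}e^{-\delta t}\,dt.
\end{align*}
The boundary term vanishes, since $\Phi_j(\Xi_j(v_j(c);S_j))=c$ by definition. Hence $\mathcal{E}_j'(c)=1-e^{-\delta v_j(c)}>0$, because $v_j(c)\ge\tau^*>0$. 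At the endpoint $c=\Phi_j(\tilde S_j)$, I will argue strict monotonicity directly instead of by differentiation. For $c_1<c_2$, split $\mathcal{E}_j(c_2)-\mathcal{E}_j(c_1)$ into two parts:
\begin{itemize}
\item the integral of $\delta e^{-\delta t}(c_2-c_1)>0$ over $[0,v_j(c_2)]$;
\item minus the integral of $\delta e^{-\delta t}[c_1-\Phi_j(\Xi_j(t;S_j))]$ over $[v_j(c_2),v_j(c_1)]$.
\end{itemize}
On that second interval the integrand is $\le0$, since $\Phi_j\ge c_1$ there, so subtracting it only increases the difference. This mean-value-free argument covers the whole closed interval at once.

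\textbf{Step 4: endpoint signs.} At $c=\phi_j$, the integrand $\phi_j-\Phi_j(\Xi_j(t;S_j))$ is $\le0$ on $[0,v_j(\phi_j)]$, because $\Phi_j$ along the path stays at or above $\phi_j$ until it returns to $\phi_j$. It is strictly negative on the open interior, which is nonempty since $v_j(\phi_j)>\tau^*>0$. Therefore $\mathcal{E}_j(\phi_j)<0$.

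At $c=\Phi_j(\tilde S_j)$, we have $v_j=\tau^*$, and on $[0,\tau^*)$ the integrand is strictly positive, since $\Phi_j$ is strictly below its maximum there. Therefore $\mathcal{E}_j(\Phi_j(\tilde S_j))>0$.

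The intermediate value theorem and strict monotonicity then give a unique $\bar c_j$ in the open interval, and so $\eta_j=\bar c_j-\phi_j>0$. The only delicate point is Step 2: the decreasing branch must actually reach every level $c\ge\phi_j$. This rests on $\rho_j<\phi_j$ and on $\Xi_j\to m_j$, the latter holding because $m_j$ is the only equilibrium above $S_j$.
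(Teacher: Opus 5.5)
Your proof is correct and follows essentially the same route as the paper: Leibniz differentiation with a vanishing boundary term, giving $\mathcal{E}_j'(c)=1-e^{-\delta v_j(c)}>0$, then sign checks at $c=\phi_j$ and $c=\Phi_j(\tilde S_j)$, then the intermediate value theorem. You also fill in details the paper leaves implicit. You show that $v_j(c)$ is finite because $\rho_j<\phi_j$, and your derivative-free difference argument covers the endpoint $c=\Phi_j(\tilde S_j)$, where $v_j$ is not differentiable.
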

\textit{Proof. }Continuity of $\mathcal{E}_j$ follows from continuity of the flow $\Xi_j(\cdot;S_j)$ and of the return time $v_j(c)$. To prove strict monotonicity, differentiate $\mathcal{E}_j(c)$
with respect to $c$. Since the boundary term vanishes at $t=v_j(c)$,
\begin{align*}
    \mathcal{E}_j'(c)=\delta\int_0^{v_j(c)} e^{-\delta t}\,dt=1-e^{-\delta v_j(c)}>0.
\end{align*}
Hence, $\mathcal{E}_j$ is strictly increasing. At $c=\phi_j$, the full-traffic path starts at level $\phi_j$, then rises above $\phi_j$, and only later returns to $\phi_j$ on the decreasing branch. Therefore $\phi_j-\Phi_j(\Xi_j(t;S_j))<0$ for all $t\in(0,v_j(\phi_j))$, which implies $\mathcal{E}_j(\phi_j)<0$. At $c=\Phi_j(\tilde S_j)$, we have $v_j(c)=\inf\{t:\Xi_j(t;S_j)=\tilde S_j\}>0$, and $\Phi_j(\tilde S_j)-\Phi_j(\Xi_j(t;S_j))>0$ for all $t\in(0,v_j(c))$, so $\mathcal{E}_j(\Phi_j(\tilde S_j))>0$. By continuity and strict monotonicity, there exists a unique
$\bar c_j\in(\phi_j,\Phi_j(\tilde S_j))$ such that
$\mathcal{E}_j(\bar c_j)=0$. Equivalently, there exists a unique $\eta_j>0$ such that
\begin{align*}
    \delta\int_0^{v_j(\phi_j+\eta_j)} e^{-\delta t}
\Big[\phi_j+\eta_j-\Phi_j(\Xi_j(t;S_j))\Big]\,dt=0.
\end{align*}
This proves the claim.\qed

We now construct the policy recursively. Let $M(0)$ be the initial active set given in
the theorem. Without loss of generality, we suppose $M(0)=\{k\}$; the case of a larger initial active set is identical, starting directly with the balanced allocation from Lemma \ref{lem:NCrtr_Bs_balanced_alloc}. At stage $\ell$, the active set is $M_\ell$, and by construction all creators in $M_\ell$ have the same marginal value, which we denote by $\Phi_{M_\ell}(t)$.

If $|M_\ell|=1$, all traffic is allocated to the unique active creator. If
$|M_\ell|\ge 2$, traffic is allocated on $M_\ell$ according to
Lemma \ref{lem:NCrtr_Bs_balanced_alloc} so as to preserve equality of marginal values
within $M_\ell$. This continues until the first time $\tau_\ell$ at which some inactive
creator $j\notin M_\ell$ satisfies $\Phi_{M_\ell}(t)-\phi_j=\eta_j$. Choose one such creator and denote it by $j_{\ell}$.

If $S_{j_\ell}\geq \tilde{S}_{j_\ell}$, then $\eta_{j_\ell}=0$, so
$\Phi_{M_\ell}(\tau_\ell)=\phi_{j_\ell}$, and creator $j_\ell$ is added immediately to
the active set, $M_{\ell+1}=M_{\ell}\cup\{j_{\ell}\}$, $t_{\ell+1}:=\tau_{\ell}$. From time $t_{\ell+1}$ onwards, traffic is balanced on $M_{\ell+1}$ according to Lemma \ref{lem:NCrtr_Bs_balanced_alloc}.

If instead $S_{j_{\ell}}<\tilde{S}_{j_{\ell}}$, then $\eta_{j_{\ell}}>0$. In that case, starting from time $\tau_{\ell}$, all traffic is allocated to creator $j_{\ell}$ until the first time
\begin{align*}
    \sigma_\ell:=\inf\Big\{t\ge \tau_\ell:\mathcal S_{j_\ell}(t)\ge \tilde S_{j_\ell},\;\Phi_{j_\ell}(t)=\Phi_{M_\ell}(\tau_\ell)\Big\}.
\end{align*}
Then creator $j_{\ell}$ is added to the active set, $M_{\ell+1}=M_{\ell}\cup\{j_{\ell}\}$, $t_{\ell+1}:=\sigma_{\ell}$. And from time $t_{\ell+1}$ onwards traffic is balanced on $M_{\ell+1}$ according to Lemma \ref{lem:NCrtr_Bs_balanced_alloc}. This defines the proposed policy recursively.

\subsection{Verification of Optimality}
We now verify that the above policy satisfies the sufficient conditions of the
Pontryagin Maximum Principle. Define for each creator $i$,
\begin{align*}
    Y_i(t):=\rho_i+\lambda_i(t)\mathcal F_{B,i}(\mathcal S_i(t)).
\end{align*}
Then, the Hamiltonian can be written as
\begin{align*}
    \mathcal{H}=\sum_{i=1}^N Y_i(t)\mathcal A_i(t)
+\sum_{i=1}^N \rho_i r\mathcal S_i(t).
\end{align*}
The next lemma generalizes the switching function identity used in the 2-creator proof (Lemma \ref{lem:Et_dynamics}).
\begin{lemma}\label{lem:NCrtr_Bs_Y_dynamics}
For every $i=1,\dots,N$,
\begin{align*}
\dot Y_i(t)=\delta Y_i(t)-\delta \Phi_i(t).
\end{align*}
Consequently, for any pair $i,j$, $E_{i,j}(t):=Y_i(t)-Y_j(t)$ satisfies
\begin{align*}
\dot E_{i,j}(t)=\delta E_{i,j}(t)-\delta\big(\Phi_i(t)-\Phi_j(t)\big).
\end{align*}
Suppose that
$\lim_{t\to\infty}e^{-\delta t}\lambda_i(t)=0$, we further have
\begin{align*}
E_{i,j}(t)=\delta\int_t^\infty e^{\delta(t-s)}
\big(\Phi_i(s)-\Phi_j(s)\big)\,ds.
\end{align*}
\end{lemma}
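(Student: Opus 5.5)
The plan is to mirror the proof of Lemma~\ref{lem:Et_dynamics}, treating one creator at a time. Here $Y_i$ involves only $\lambda_i$ and $\mathcal{S}_i$. The costate equation for $\lambda_i$ in the $N$-creator Hamiltonian has the same form as \eqref{eq:lambda_i_evolution}, because the Hamiltonian is separable across creators:
\begin{align*}
\dot\lambda_i(t)=\delta\lambda_i(t)-\rho_i r-\lambda_i(t)\mathcal{A}_i(t)\mathcal{F}_{B,i}'(\mathcal{S}_i(t)).
\end{align*}

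\textbf{Step 1: the differential identity.} I will differentiate $Y_i(t)=\rho_i+\lambda_i(t)\mathcal{F}_{B,i}(\mathcal{S}_i(t))$ and use
\begin{align*}
\frac{d}{dt}\mathcal{F}_{B,i}(\mathcal{S}_i)=\mathcal{F}_{B,i}'(\mathcal{S}_i)\,\mathcal{A}_i\,\mathcal{F}_{B,i}(\mathcal{S}_i).
\end{align*}
The two terms containing $\lambda_i\mathcal{A}_i\mathcal{F}_{B,i}'\mathcal{F}_{B,i}$ then cancel. What remains is
\begin{align*}
\dot Y_i=(\delta\lambda_i-\rho_i r)\mathcal{F}_{B,i}=\delta(Y_i-\rho_i)-\rho_i r\mathcal{F}_{B,i}.
\end{align*}
By the definition \eqref{eq:phi_i}, $\delta\Phi_i=\delta\rho_i+\rho_i r\mathcal{F}_{B,i}$. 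Substituting gives $\dot Y_i=\delta Y_i-\delta\Phi_i$. Subtracting the identities for $i$ and $j$ immediately yields the equation for $E_{i,j}$.

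\textbf{Step 2: the integral representation.} I will multiply the equation for $E_{i,j}$ by $e^{-\delta s}$ to get
\begin{align*}
\frac{d}{ds}\bigl(e^{-\delta s}E_{i,j}(s)\bigr)=-\delta e^{-\delta s}\bigl(\Phi_i(s)-\Phi_j(s)\bigr).
\end{align*}
Integrating from $t$ to $T$ and letting $T\to\infty$ gives the claimed formula, provided $e^{-\delta T}E_{i,j}(T)\to 0$. I will check this from the definition:
\begin{align*}
e^{-\delta T}E_{i,j}(T)=e^{-\delta T}(\rho_i-\rho_j)+e^{-\delta T}\lambda_i\mathcal{F}_{B,i}-e^{-\delta T}\lambda_j\mathcal{F}_{B,j}.
\end{align*}
Each $\mathcal{F}_{B,\cdot}$ is bounded, since $\mathcal{S}_i\in[0,m_i]$ gives $0\le\mathcal{F}_{B,i}\le p\rho_i+q$. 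Under the transversality assumption all three terms therefore vanish. The improper integral converges because each $\Phi_i$ is bounded.

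\textbf{Main obstacle.} The only delicate point is this limiting step, that is, justifying the boundary term. It is handled by the boundedness of $\mathcal{F}_{B,i}$, exactly as in the two-creator case. The rest is routine algebra.
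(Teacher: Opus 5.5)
Your proposal is correct and follows the paper's proof essentially step for step. You differentiate $Y_i$, cancel the $\lambda_i\mathcal{A}_i\mathcal{F}_{B,i}'\mathcal{F}_{B,i}$ terms using the costate equation, subtract the identities for $i$ and $j$, and integrate $e^{-\delta s}E_{i,j}(s)$ forward to infinity; your explicit check that $e^{-\delta T}E_{i,j}(T)\to 0$, using transversality for both $\lambda_i$ and $\lambda_j$ and the boundedness of $\mathcal{F}_{B,i}$, fills in a step the paper leaves implicit here.
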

\textit{Proof. }Differentiating $Y_i(t)$ gives
\begin{align*}
\dot Y_i(t)=\dot\lambda_i(t)\mathcal F_{B,i}(\mathcal S_i(t))
+\lambda_i(t)\mathcal F'_{B,i}(\mathcal S_i(t))\dot{\mathcal S}_i(t).
\end{align*}
Using the costate equation and
$\dot{\mathcal S}_i(t)=\mathcal A_i(t)\mathcal F_{B,i}(\mathcal S_i(t))$, the terms
involving $\mathcal A_i(t)\mathcal F'_{B,i}$ cancel, and we obtain
\begin{align*}
\dot Y_i(t)=\delta\lambda_i(t)\mathcal F_{B,i}(\mathcal S_i(t))
-\rho_i r\,\mathcal F_{B,i}(\mathcal S_i(t))=
\delta Y_i(t)-\delta\Phi_i(t).
\end{align*}
Subtracting the equations for $i$ and $j$ yields the equation for $E_{i,j}$. Multiplying
by $e^{-\delta t}$ and integrating forward to $\infty$ gives the integral
representation.\qed

The next lemma is the direct $N$-creator counterpart of the positivity lemma in the 2-creator proof.
\begin{lemma}\label{lem:NCrtr_Bs_lambda_positive}
    If the transversality condition $\lim_{t\to\infty}e^{-\delta t}\lambda_i(t)=0$ holds for all $i$, then $\lambda_i(t)>0$ for every $i$ and every $t>0$.
\end{lemma}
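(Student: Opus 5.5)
The plan is to mirror the proof of Lemma~\ref{lem:lambda_pos} creator by creator. The key observation is that in the $N$-creator Hamiltonian
\begin{align*}
\mathcal{H}=\sum_{i=1}^N Y_i(t)\mathcal{A}_i(t)+\sum_{i=1}^N\rho_i r\mathcal{S}_i(t),
\end{align*}
the state $\mathcal{S}_i$ enters only through the $i$-th summand. Hence the costate equation decouples:
\begin{align*}
\dot\lambda_i(t)=\delta\lambda_i(t)-\frac{\partial\mathcal{H}}{\partial\mathcal{S}_i}=a_i(t)\lambda_i(t)-\rho_i r,\qquad a_i(t):=\delta-\mathcal{A}_i(t)\mathcal{F}_{B,i}'(\mathcal{S}_i(t)).
\end{align*}
This is exactly the scalar linear ODE treated in Lemma~\ref{lem:lambda_pos}. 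The only change is that $\mathcal{A}_i$ is now an arbitrary admissible component of an $N$-dimensional allocation. The argument there used only $\mathcal{A}_i(t)\in[0,A]$ and the creator-$i$ dynamics, so it carries over.

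Concretely, I will proceed in four steps.

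\textbf{Step 1.} Set $M_i(t)=\exp\bigl(-\int_0^t a_i(u)\,du\bigr)>0$. Checking $\frac{d}{dt}(\lambda_i M_i)=-\rho_i r M_i$ gives, for $T>t$,
\begin{align*}
\lambda_i(t)=\frac{\lambda_i(T)M_i(T)}{M_i(t)}+\frac{\rho_i r}{M_i(t)}\int_t^T M_i(s)\,ds.
\end{align*}

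\textbf{Step 2.} Since $\frac{d}{dt}\log\mathcal{F}_{B,i}(\mathcal{S}_i(t))=\mathcal{A}_i(t)\mathcal{F}_{B,i}'(\mathcal{S}_i(t))$, we obtain
\begin{align*}
M_i(T)=e^{-\delta T}\,\frac{\mathcal{F}_{B,i}(\mathcal{S}_i(T))}{\mathcal{F}_{B,i}(S_i)}.
\end{align*}
The factor $\mathcal{F}_{B,i}(\mathcal{S}_i(T))$ is bounded because $\mathcal{S}_i\in[0,m_i]$. Therefore $e^{\delta T}M_i(T)\le C_i$.

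\textbf{Step 3.} Combining this bound with transversality gives
\begin{align*}
\lambda_i(T)M_i(T)=\bigl(e^{-\delta T}\lambda_i(T)\bigr)\bigl(e^{\delta T}M_i(T)\bigr)\to 0.
\end{align*}
Letting $T\to\infty$ in Step 1 then yields
\begin{align*}
\lambda_i(t)=\frac{\rho_i r}{M_i(t)}\int_t^\infty M_i(s)\,ds,
\end{align*}
and the integral converges because $M_i(s)\le C_i e^{-\delta s}$. The integrand is strictly positive and $\rho_i r>0$, so $\lambda_i(t)>0$.

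The main obstacle is the logarithmic step, which requires $\mathcal{F}_{B,i}(\mathcal{S}_i(t))>0$ for all finite $t$. I will justify this as follows. We have $\mathcal{F}_{B,i}(S_i)>0$ whenever $S_i<m_i$, since $p\rho_i>0$. Moreover, $m_i$ is an equilibrium of the ODE $\dot{\mathcal{S}}_i=\mathcal{A}_i\mathcal{F}_{B,i}(\mathcal{S}_i)$ with locally Lipschitz right-hand side. By uniqueness, $\mathcal{S}_i(t)<m_i$ for all finite $t$, so the ratio in Step 2 is well defined.

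There is one degenerate case, $S_i=m_i$, where $\mathcal{F}_{B,i}\equiv0$ along the path. In that case I will bypass the logarithm and bound $a_i(t)\ge\delta-A\sup_s|\mathcal{F}_{B,i}'(s)|$ directly. This yields only an exponential bound on $M_i$, which may not dominate $e^{-\delta T}$. So I would instead note that $\mathcal{F}_{B,i}'(m_i)=-(p\rho_i+q)/m_i<0$, hence $a_i(t)\ge\delta$ along the stationary path. That gives $e^{\delta T}M_i(T)\le1$, and the rest of the argument is unchanged.
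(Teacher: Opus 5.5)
Your proposal is correct and follows the paper's proof essentially step for step: the same integrating factor $M_i$, the same identity $M_i(T)=e^{-\delta T}\mathcal{F}_{B,i}(\mathcal{S}_i(T))/\mathcal{F}_{B,i}(S_i)$, and the same use of transversality to remove the boundary term. You also fill in two details the paper leaves implicit. First, you justify that $\mathcal{F}_{B,i}(\mathcal{S}_i(t))>0$ for all finite $t$, so the logarithmic step is valid. Second, you treat the degenerate case $S_i=m_i$, where $a_i(t)\ge\delta$ because $\mathcal{F}_{B,i}'(m_i)=-(p\rho_i+q)/m_i<0$.
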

\textit{Proof. }Fix $i$, and write
\begin{align*}
a_i(t):=\delta-\mathcal A_i(t)\mathcal F'_{B,i}(\mathcal S_i(t)),
\quad
M_i(t):=\exp\left(-\int_0^t a_i(u)\,du\right).
\end{align*}
Then
\begin{align*}
\frac{d}{dt}\big(\lambda_i(t)M_i(t)\big)=-\rho_i r\,M_i(t),
\end{align*}
so for any $T>t$,
\begin{align*}
\lambda_i(t)=\frac{\lambda_i(T)M_i(T)}{M_i(t)}
+\frac{\rho_i r}{M_i(t)}\int_t^T M_i(s)\,ds.
\end{align*}
Using
\begin{align*}
M_i(T)=e^{-\delta T}\frac{\mathcal F_{B,i}(\mathcal S_i(T))}{\mathcal F_{B,i}(\mathcal S_i(0))}
\end{align*}
and then letting $T\to\infty$, the transversality condition implies
\begin{align*}
\lambda_i(t)=\frac{\rho_i r}{M_i(t)}\int_t^\infty M_i(s)\,ds>0.
\end{align*}
Thus $\lambda_i(t)>0$.\qed

The next lemma states the sufficient condition as in the 2-creator proof.
\begin{lemma}\label{lem:NCrtr_Bs_PMP_sufficient}
    For any admissible policy $\mathcal{A}_i(t)$ and corresponding state trajectories $\mathcal{S}_i(t)$, $i=1,\dots,N$ such that 1) the transversality condition $\lim_{t\to\infty} e^{-\delta t}\lambda_i(t)=0$; 2) the Hamiltonian $\mathcal{H}$ is concave in $\mathcal{S}$ at each $t$; 3) $\mathcal{A}_1^*(t),\dots,\mathcal{A}_N^*(t)$ are admissible, and the proposed control allocates positive traffic only to creators with maximal $Y(t)$.
\end{lemma}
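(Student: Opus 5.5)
This is the $N$-creator version of the sufficiency result in Lemma \ref{lem:PMP_suff_conds}, and I would prove it by the classical Mangasarian-type argument (Sethi's infinite-horizon version, (3.99) there). The only new feature is that the scalar switching function $E(t)$ is replaced by the vector of indices $Y_i(t)$.

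\textbf{Step 1 (pointwise Hamiltonian maximization).} Write $\mathcal{H}=\sum_i Y_i(t)\mathcal{A}_i(t)+\sum_i\rho_i r\mathcal{S}_i(t)$. The term involving the control is linear in $\mathcal{A}$ over the simplex $\{\sum_i\mathcal{A}_i=A,\ \mathcal{A}_i\ge 0\}$, so its maximum equals $A\max_i Y_i(t)$. That maximum is attained exactly by the allocations supported on $\arg\max_i Y_i(t)$. Condition 3) therefore says that $\mathcal{A}^*(t)$ maximizes $\mathcal{H}(\mathcal{S}^*(t),\cdot,\lambda(t))$ for every $t$.

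\textbf{Step 2 (comparison with an arbitrary admissible policy).} Fix any admissible $(\mathcal{A},\mathcal{S})$ with the same initial state. Using the costate equation $\dot\lambda_i=\delta\lambda_i-\partial\mathcal{H}/\partial\mathcal{S}_i$, I would show
\[
\Pi^*-\Pi\ge \lim_{T\to\infty}e^{-\delta T}\sum_i\lambda_i(T)\big(\mathcal{S}_i^*(T)-\mathcal{S}_i(T)\big).
\]
The computation goes as follows. Write the integrand difference as $\mathcal{H}^*-\mathcal{H}-\sum_i\lambda_i(\dot{\mathcal{S}}_i^*-\dot{\mathcal{S}}_i)$. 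Apply concavity of $\mathcal{H}$ in $\mathcal{S}$ (condition 2) together with Step 1. The concavity step uses $\max_{\mathcal{A}}\mathcal{H}(\mathcal{S}^*,\mathcal{A},\lambda)\ge\mathcal{H}(\mathcal{S}^*,\mathcal{A},\lambda)$ at the fixed control $\mathcal{A}$, and then the gradient inequality in $\mathcal{S}$ at that fixed $\mathcal{A}$. This works because for each fixed $\mathcal{A}_i\in[0,A]$ the map $\mathcal{S}_i\mapsto\lambda_i\mathcal{A}_i\mathcal{F}_{B,i}(\mathcal{S}_i)$ is concave once $\lambda_i>0$ (Lemma \ref{lem:NCrtr_Bs_lambda_positive}). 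Finally, integrate $\frac{d}{dt}\big(e^{-\delta t}\lambda_i(\mathcal{S}_i^*-\mathcal{S}_i)\big)$ by parts, which collects all terms into the boundary term above.

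\textbf{Step 3 (transversality).} Since $0\le\mathcal{S}_i,\mathcal{S}_i^*\le m_i$, condition 1) makes the boundary term vanish. Hence $\Pi^*\ge\Pi$, which is the claimed optimality.

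\textbf{Main obstacle.} The delicate point is Step 2. The Hamiltonian is jointly non-concave in $(\mathcal{S},\mathcal{A})$ because of the product $\mathcal{A}_i\mathcal{F}_{B,i}(\mathcal{S}_i)$, so the argument must go through the maximized Hamiltonian, or through the fixed-control splitting described in Step 2. The splitting is what I would try first: it needs only concavity in $\mathcal{S}$ for each fixed control, which is exactly what condition 2) together with positivity of $\lambda$ provides. If a reviewer demanded Arrow's condition, I would fall back on citing Sethi (3.99) exactly as in the proof of Lemma \ref{lem:PMP_suff_conds}.
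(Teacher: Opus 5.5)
Steps 1 and 3 are fine, but the fixed-control splitting in Step 2 does not close. The gradient inequality at the fixed competing control $\mathcal{A}$ produces $\partial_{\mathcal{S}_i}\mathcal{H}(\mathcal{S}^*,\mathcal{A},\lambda)=\rho_i r+\lambda_i\mathcal{A}_i\mathcal{F}'_{B,i}(\mathcal{S}_i^*)$. The costate equation, however, identifies $\delta\lambda_i-\dot\lambda_i$ with the gradient at the \emph{candidate} control, $\rho_i r+\lambda_i\mathcal{A}_i^*\mathcal{F}'_{B,i}(\mathcal{S}_i^*)$. After integrating by parts and dropping the vanishing boundary term, what you actually get is
\[
\Pi^*-\Pi\;\ge\;\int_0^\infty e^{-\delta t}\sum_{i=1}^N\lambda_i(t)\bigl(\mathcal{A}_i(t)-\mathcal{A}_i^*(t)\bigr)\,\mathcal{F}'_{B,i}\bigl(\mathcal{S}_i^*(t)\bigr)\bigl(\mathcal{S}_i^*(t)-\mathcal{S}_i(t)\bigr)\,dt .
\]
This integrand has no sign: $\mathcal{F}'_{B,i}$ changes sign at $\tilde S_i$, and the two differences are arbitrary. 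Splitting in the other order (gradient inequality at $\mathcal{A}^*$ first) matches the costate equation. But it leaves $\sum_i\bigl(\rho_i+\lambda_i\mathcal{F}_{B,i}(\mathcal{S}_i)\bigr)(\mathcal{A}_i^*-\mathcal{A}_i)$ evaluated at the competitor's state $\mathcal{S}$, where Step 1 says nothing. Absorbing this cross term is exactly the job of joint concavity in $(\mathcal{S},\mathcal{A})$ (Mangasarian) or concavity of the maximized Hamiltonian (Arrow). Condition 2) supplies neither. Here $\max_{\mathcal{A}}\mathcal{H}=\sum_i\rho_i r\mathcal{S}_i+A\max_i Y_i$ has convex kinks along $\{Y_i=Y_j\}$, which is precisely where the balanced phases sit. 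Your fallback, citing Sethi as in Lemma~\ref{lem:PMP_suff_conds}, is the paper's entire proof. That theorem is of Arrow/Mangasarian type, so it does not apply under condition 2) alone either.

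This cannot be patched: under 1)--3) alone, the conclusion is false for Bass growth. Take $N=2$, $S_1<m_1$, $S_2=0<\tilde S_2$, parameters with $\phi_2=\rho_2(1+rp\rho_2/\delta)<\rho_1$, and the candidate $\mathcal{A}\equiv(A,0)$.

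Transversality forces $\lambda_2\equiv\rho_2 r/\delta$, so $Y_2\equiv\phi_2$. Meanwhile $\lambda_1>0$ gives $Y_1>\rho_1>Y_2$. Hence 1)--3) all hold.

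Now, at a late time $\tau$ when creator 1 is nearly saturated, divert all traffic to creator 2 until $\mathcal{S}_2=m_2/2$, then revert. The NPV changes by $e^{-\delta\tau}(G+o(1))$ as $\tau\to\infty$, where
\[
G\;\ge\;\frac{\rho_2 r m_2}{2\delta}\,e^{-\delta K/A}-(\rho_1-\rho_2)K,\qquad K=\frac{2m_2}{q}\log\Bigl(1+\frac{q}{2p\rho_2}\Bigr).
\]
Here $K$ bounds $A$ times the diversion time. Thus $G>0$ once $q$, and then $A$, are large, and $\phi_2<\rho_1$ is unaffected because it does not depend on $q$ or $A$. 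So any valid argument must use more than local PMP information.

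One route works partially. Reparametrize each creator by its cumulative traffic $x_i$ (with $\dot x_i=\mathcal{A}_i$). The dynamics become linear, $\mathcal{S}_i=\sigma_i(x_i)$ with $\sigma_i''=\mathcal{F}'_{B,i}\mathcal{F}_{B,i}$, and the switching coefficients are still the $Y_i$. Mangasarian then applies whenever all $S_i\ge\tilde S_i$, in particular for $q=0$. It fails exactly on the accelerating branch, where the conditional reversal lives.
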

\textit{Proof.} Same as the proof of Lemma \ref{lem:PMP_suff_conds}, all conditions extend to $N$-creator case.\qed

We next verify concavity and some structural properties of the proposed policy.
\begin{lemma}\label{lem:NCrtr_Bs_concavity}
The Hamiltonian is concave in $(\mathcal S_1,\dots,\mathcal S_N)$ at each $t$.
\end{lemma}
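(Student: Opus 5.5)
The plan mirrors the concavity step in the proof of Theorem~\ref{Thm:Opt_policy_2Crtr_Bs}. I will write the $N$-creator current-value Hamiltonian as
\begin{align*}
\mathcal{H}=\sum_{i=1}^N \rho_i\big(\mathcal{A}_i(t)+r\mathcal{S}_i(t)\big)+\sum_{i=1}^N\lambda_i(t)\mathcal{A}_i(t)\mathcal{F}_{B,i}(\mathcal{S}_i(t)).
\end{align*}
Each state $\mathcal{S}_i$ enters only through the $i$-th summand. Hence $\mathcal{H}$ is additively separable in $(\mathcal{S}_1,\dots,\mathcal{S}_N)$, and its Hessian in $\mathcal{S}$ is diagonal. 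Concavity of $\mathcal{H}$ on the product domain $\prod_i[0,m_i]$ then reduces to showing that each diagonal entry $\partial^2\mathcal{H}/\partial\mathcal{S}_i^2$ is nonpositive, since a diagonal matrix with nonpositive entries is negative semidefinite.

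The second step is the direct computation. The term $\rho_i(\mathcal{A}_i+r\mathcal{S}_i)$ is affine in $\mathcal{S}_i$ and contributes nothing to the second derivative. For the remaining term, $\mathcal{F}_{B,i}$ is a quadratic in $\mathcal{S}_i$ with leading coefficient $-q/m_i^2$, so $\mathcal{F}_{B,i}''\equiv -2q/m_i^2$. This gives
\begin{align*}
\frac{\partial^2\mathcal{H}}{\partial \mathcal{S}_i^2}=-\frac{2q}{m_i^2}\,\lambda_i(t)\mathcal{A}_i(t).
\end{align*}

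The third step is the sign. Admissibility gives $\mathcal{A}_i(t)\in[0,A]$. Under the transversality condition, Lemma~\ref{lem:NCrtr_Bs_lambda_positive} gives $\lambda_i(t)>0$ for every $i$ and every $t>0$. Together with $q>0$, this makes every diagonal entry nonpositive, so $\mathcal{H}$ is concave in $\mathcal{S}$ at each $t$.

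The only delicate point is that the costates must be the ones satisfying transversality. Without that condition, positivity of $\lambda_i$ is not available, and concavity could fail when $\lambda_i<0$. At $t=0$ I would appeal to the representation $\lambda_i(t)=\frac{\rho_i r}{M_i(t)}\int_t^\infty M_i(s)\,ds$ from that lemma, which remains valid and positive at $t=0$, so that no measure-zero exception arises. I would also remark that the linear case $q=0$ is trivially covered, because there $\mathcal{H}$ is affine in $\mathcal{S}$.
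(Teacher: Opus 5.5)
Your proposal is correct and matches the paper's proof: separability reduces concavity to the sign of $\partial^2\mathcal{H}/\partial\mathcal{S}_i^2=\lambda_i(t)\mathcal{A}_i(t)\mathcal{F}''_{B,i}(\mathcal{S}_i(t))$, which is nonpositive because $\mathcal{F}''_{B,i}=-2q/m_i^2$, $\mathcal{A}_i(t)\ge 0$, and Lemma~\ref{lem:NCrtr_Bs_lambda_positive} gives $\lambda_i(t)>0$ for the costates satisfying transversality. Your extra remarks, that the positivity representation also holds at $t=0$ and that the case $q=0$ is trivially affine, are harmless refinements that the paper does not spell out.
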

\textit{Proof. }Since the state variables enter the Hamiltonian separately across creators,
\begin{align*}
\frac{\partial^2\mathcal H}{\partial \mathcal S_i^2}=\lambda_i(t)\mathcal A_i(t)\mathcal F''_{B,i}(\mathcal S_i(t)).
\end{align*}
Of which, $\mathcal F''_{B,i}(s)=-\frac{2q}{m_i^2}<0$, $\mathcal{A}_i(t)\geq 0$, and by Lemma \ref{lem:NCrtr_Bs_lambda_positive}, $\lambda_i(t)>0$. Hence $\frac{\partial^2\mathcal H}{\partial \mathcal S_i^2}\le 0$ for all $i$, which proves concavity.\qed

\begin{lemma}\label{lem:NCrtr_Bs_Y_equal_in_active_set}
Suppose $M$ is an active set and the balanced allocation from
Lemma \ref{lem:NCrtr_Bs_balanced_alloc} is used on $M$. If at some time $t_0$ the
creators in $M$ satisfy
\begin{align*}
\Phi_i(t_0)=\Phi_{i'}(t_0),
Y_i(t_0)=Y_{i'}(t_0)
\text{ for all }i,i'\in M,
\end{align*}
then these equalities continue to hold for all $t\geq t_0$.
\end{lemma}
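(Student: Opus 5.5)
The plan is to show that on the active set $M$ the differences $E_{i,i'}(t)=Y_i(t)-Y_{i'}(t)$ satisfy a linear ODE with zero forcing, so they stay at zero. I will use Lemma~\ref{lem:NCrtr_Bs_Y_dynamics}, which gives, for every pair $i,i'$ and under any admissible allocation,
\begin{align*}
\dot E_{i,i'}(t)=\delta E_{i,i'}(t)-\delta\big(\Phi_i(t)-\Phi_{i'}(t)\big).
\end{align*}
This identity holds regardless of the control, because the terms involving $\mathcal{A}_i\mathcal{F}'_{B,i}$ cancel in the derivation. It therefore applies in particular along the trajectory generated by the balanced allocation $a^M(\cdot)$.

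First, I will invoke Lemma~\ref{lem:NCrtr_Bs_balanced_alloc}. Since at $t_0$ the creators in $M$ satisfy $\Phi_i(t_0)=\Phi_{i'}(t_0)$ and lie on the decelerating branch, the balanced allocation preserves $\Phi_i(t)=\Phi_{i'}(t)$ for all $t\ge t_0$. I should note that $s_i\ge\tilde S_i$ is part of that lemma's hypothesis. This holds at activation times by the construction of the policy, where creators join $M$ at or after their return to the decreasing branch, and it persists because $\mathcal{S}_i$ is nondecreasing. The forcing term $\Phi_i-\Phi_{i'}$ therefore vanishes identically on $[t_0,\infty)$, and the ODE reduces to $\dot E_{i,i'}=\delta E_{i,i'}$, whose solution is $E_{i,i'}(t)=e^{\delta(t-t_0)}E_{i,i'}(t_0)$.

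Second, since $E_{i,i'}(t_0)=Y_i(t_0)-Y_{i'}(t_0)=0$ by hypothesis, uniqueness of solutions of this linear ODE gives $E_{i,i'}(t)\equiv 0$ for $t\ge t_0$, that is, $Y_i(t)=Y_{i'}(t)$. Together with the preserved equality of the $\Phi_i$, this is the claim. As a consistency check, the integral representation in Lemma~\ref{lem:NCrtr_Bs_Y_dynamics} (under transversality) would give $E_{i,i'}(t)=0$ directly as well. The forward ODE argument does not even need transversality, only the initial equality.

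The main obstacle is ensuring that Lemma~\ref{lem:NCrtr_Bs_balanced_alloc} genuinely applies for all $t\ge t_0$. Two things must hold. First, the creators in $M$ must remain on their decelerating branches, so that each $\beta_i>0$ and the allocation stays well defined. Second, each $\mathcal{F}_{B,i}(\mathcal{S}_i(t))$ must stay positive for finite $t$, so that no $\beta_i$ vanishes and no $a_i^M$ blows up. I will handle this using monotonicity of $\mathcal{S}_i$ under nonnegative traffic together with $\mathcal{S}_i(t)<m_i$ for finite $t$, which keeps each $\beta_i(\mathcal{S}_i(t))$ strictly positive and finite.
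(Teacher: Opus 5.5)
Your argument is correct and is the same as the paper's: Lemma~\ref{lem:NCrtr_Bs_balanced_alloc} keeps $\Phi_i\equiv\Phi_{i'}$ on $[t_0,\infty)$, so Lemma~\ref{lem:NCrtr_Bs_Y_dynamics} reduces to $\dot E_{i,i'}=\delta E_{i,i'}$ with $E_{i,i'}(t_0)=0$, which forces $E_{i,i'}\equiv 0$. One small slip in your side remark is that $\mathcal{S}_i\ge\tilde S_i$ together with $\mathcal{F}_{B,i}>0$ only gives $\beta_i\ge 0$, because $\mathcal{F}_{B,i}'(\tilde S_i)=0$; this does not affect the proof, since the lemma's hypothesis already presupposes that the balanced allocation $a^M$ is well defined.
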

\textit{Proof. }By Lemma \ref{lem:NCrtr_Bs_balanced_alloc}, equality of the $\Phi_i$'s is preserved.
Once $\Phi_i(t)=\Phi_{i'}(t)$, Lemma \ref{lem:NCrtr_Bs_Y_dynamics} gives
\begin{align*}
\frac{d}{dt}\big(Y_i(t)-Y_{i'}(t)\big)=\delta\big(Y_i(t)-Y_{i'}(t)\big).
\end{align*}
Since the difference is zero at $t_0$, it remains zero for all $t\geq t_0$.\qed

Hence, whenever the system is on an active set $M$, we may write the common switching
coefficient and common marginal value as $Y_M(t):=Y_i(t)$, $\Phi_M(t):=\Phi_i(t)$, for any $i\in M$.

We now record the boundary properties implied by the construction of the thresholds.
\begin{lemma}\label{lem:NCrtr_Bs_boundary_properties}
Fix an active set $M$ and an inactive creator $j\notin M$.
\begin{itemize}
    \item[(a)] If $S_j\ge \tilde S_j$ and $\Phi_M(t_0)=\phi_j$, then $Y_M(t_0)=Y_j(t_0)$.
    \item[(b)] If $S_j<\tilde S_j$ and $\Phi_M(t_0)-\phi_j=\eta_j$, then $Y_M(t_0)=Y_j(t_0)$.
\end{itemize}
\end{lemma}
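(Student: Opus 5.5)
The plan is to evaluate the switching coefficient difference $E_{M,j}(t_0):=Y_M(t_0)-Y_j(t_0)$ through the integral representation of Lemma~\ref{lem:NCrtr_Bs_Y_dynamics}, taken along the trajectory generated by the proposed recursive policy from time $t_0$ onward. That is, we use
\begin{align*}
E_{M,j}(t_0)=\delta\int_{t_0}^{\infty}e^{\delta(t_0-s)}\big(\Phi_M(s)-\Phi_j(s)\big)\,ds,
\end{align*}
which is valid because we fix costates satisfying the transversality condition. We will show that the integrand vanishes identically in case (a). In case (b) we will show that the integral reduces exactly to $\mathcal{E}_j(\bar c_j)$, which is zero by Lemma~\ref{lem:NCrtr_Bs_gamma_exist}.

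\textbf{An invariance claim.} First we establish the following: once a set $M'$ of creators shares a common marginal value at some time and is merged into the active set, their marginal values stay equal for all later times under the recursive policy. There are three kinds of stage to consider.
\begin{itemize}
\item[(i)] During balanced stages, equality is preserved by Lemma~\ref{lem:NCrtr_Bs_balanced_alloc}.
\item[(ii)] When a further creator is added with $\eta=0$, the active set simply enlarges and we return to (i).
\item[(iii)] During a conditional reversal phase for some later creator $j'$, every member of $M'$ receives zero traffic. Their states, and hence their $\Phi$ values, are therefore frozen and remain equal.
\end{itemize}
Since every stage of the policy is of one of these types, the common value $\Phi_{M'}(s)$ is well defined for all $s$ after the merge.

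\textbf{Case (a).} Here $S_j\ge\tilde S_j$, so $\eta_j=0$. Creator $j$ has been inactive, so $\mathcal S_j(t_0)=S_j$ and $\Phi_j(t_0)=\phi_j=\Phi_M(t_0)$. The policy adds $j$ immediately at $t_0$, so the invariance claim applied to $M\cup\{j\}$ gives $\Phi_M(s)=\Phi_j(s)$ for all $s\ge t_0$. The integrand is then zero, hence $Y_M(t_0)=Y_j(t_0)$.

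\textbf{Case (b).} Here $S_j<\tilde S_j$. On $[t_0,\sigma]$ all traffic goes to $j$. The members of $M$ are frozen, so $\Phi_M(s)=\Phi_M(t_0)=\phi_j+\eta_j=\bar c_j$. Meanwhile $\mathcal S_j(s)=\Xi_j(s-t_0;S_j)$, again because $\mathcal S_j(t_0)=S_j$. By definition, $\sigma$ is the first time on the deceleration branch at which $\Phi_j$ returns to $\bar c_j$, so $\sigma-t_0=v_j(\bar c_j)$. After $\sigma$, creator $j$ joins the active set with equal marginal value, and the invariance claim makes the integrand vanish on $[\sigma,\infty)$. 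Changing variables $u=s-t_0$ gives
\begin{align*}
E_{M,j}(t_0)=\delta\int_0^{v_j(\bar c_j)}e^{-\delta u}\big[\bar c_j-\Phi_j(\Xi_j(u;S_j))\big]\,du=\mathcal{E}_j(\bar c_j)=0 .
\end{align*}

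\textbf{Main obstacle.} The hard part is the invariance claim, specifically checking that nothing in later stages breaks the equality $\Phi_M=\Phi_j$. Two situations need care: several creators hitting their thresholds simultaneously, and repeated reversals. I would handle these by induction on the stages $\ell$, using the fact that in every stage either a balanced allocation is used on a superset of the active set, or the active set receives zero traffic. A secondary point is that case (b) relies on the return time $\sigma$ being finite. This follows because $\Phi_j$ along the full-traffic flow decreases on the deceleration branch to $\rho_j$, which lies below $\bar c_j$; this was already implicit in the definition of $v_j$ in Lemma~\ref{lem:NCrtr_Bs_gamma_exist}.
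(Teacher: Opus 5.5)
Your proposal is correct and follows essentially the same route as the paper. You evaluate $Y_M(t_0)-Y_j(t_0)$ through the integral representation of Lemma~\ref{lem:NCrtr_Bs_Y_dynamics} along the proposed trajectory, note that the integrand vanishes once $j$ is merged, and in case (b) identify the remaining integral over the reversal window with $\mathcal{E}_j(\bar c_j)=0$. Your invariance claim, in particular that later conditional reversals freeze the already-merged creators, makes explicit what the paper asserts in one line when it says the future gap is identically zero after admission.
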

\textit{Proof. } In case (a), creator $j$ is already on the decreasing branch and $\eta_j=0$.
By construction, if the system is at a state with $\Phi_M(t_0)=\phi_j$, then creator
$j$ is admitted immediately and thereafter the enlarged active set is balanced, so the
future gap $\Phi_M-\Phi_j$ is identically zero. By
Lemma \ref{lem:NCrtr_Bs_Y_dynamics},
\begin{align*}
    Y_M(t_0)-Y_j(t_0)=\delta\int_{t_0}^\infty e^{\delta(t_0-s)}\big(\Phi_M(s)-\Phi_j(s)\big)\,ds=0.
\end{align*}
In case (b), let $c:=\phi_j+\eta_j=\Phi_M(t_0)$. By the definition of
$\eta_j$,
\begin{align*}
    \delta\int_0^{v_j(c)} e^{-\delta u}\big[c-\Phi_j(\Xi_j(u;S_j))\big]\,du=0.
\end{align*}
If all traffic is allocated to creator $j$ from time $t_0$ until the return time
$t_0+v_j(c)$, while the old active set remains the same, then $\Phi_M(s)\equiv c$ on that
interval and the future gap is zero afterwards. Hence
Lemma \ref{lem:NCrtr_Bs_Y_dynamics} yields
\begin{align*}
    Y_M(t_0)-Y_j(t_0)=\delta\int_{t_0}^{t_0+v_j(c)} e^{\delta(t_0-s)}\big(\Phi_M(s)-\Phi_j(s)\big)\,ds=0.
\end{align*}
This proves both parts.\qed

\begin{lemma}\label{lem:NCrtr_Bs_reversal_sign}
In the setting of Lemma \ref{lem:NCrtr_Bs_boundary_properties}(b), if all traffic is
allocated to creator $j$ immediately after time $t_0$, then
\begin{align*}
Y_M(t)-Y_j(t)<0 \text{ for all }t\in(t_0,t_0+v_j(c)),
\end{align*}
where $c=\phi_j+\eta_j$.
\end{lemma}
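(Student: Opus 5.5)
The plan is to use the integral representation from Lemma \ref{lem:NCrtr_Bs_Y_dynamics} along the reversal trajectory, together with the zero-value identity from Lemma \ref{lem:NCrtr_Bs_boundary_properties}(b). Let $c=\phi_j+\eta_j$ and $v:=v_j(c)$. During $(t_0,t_0+v)$ all traffic goes to creator $j$, so every creator in $M$ is frozen and $\Phi_M(s)\equiv c$. Meanwhile $\Phi_j(s)=\Phi_j(\Xi_j(s-t_0;S_j))$. After $t_0+v$ the enlarged set is balanced, so the gap vanishes. Hence, for $t\in(t_0,t_0+v)$,
\begin{align*}
Y_M(t)-Y_j(t)=\delta\int_{t}^{t_0+v}e^{\delta(t-s)}\big(c-\Phi_j(s)\big)\,ds
=\delta e^{\delta(t-t_0)}\int_{t-t_0}^{v}e^{-\delta u}\big(c-\Phi_j(\Xi_j(u;S_j))\big)\,du .
\end{align*}
Writing $\mathcal{G}(\tau):=\delta\int_\tau^{v}e^{-\delta u}\,\kappa(u)\,du$ with $\kappa(u):=c-\Phi_j(\Xi_j(u;S_j))$, it suffices to show that $\mathcal{G}(\tau)<0$ for all $\tau\in(0,v)$. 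We already know $\mathcal{G}(0)=\mathcal{E}_j(c)=0$ by the definition of $\eta_j$ in Lemma \ref{lem:NCrtr_Bs_gamma_exist}, and trivially $\mathcal{G}(v)=0$.

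Next, determine the sign pattern of $\kappa$. By Lemma \ref{lem:2Crtr_bass_partial_Sit}, along the full-traffic flow $\Phi_j$ is strictly increasing while $\Xi_j<\tilde S_j$, and strictly decreasing afterwards; the flow is strictly increasing in $u$ since $\mathcal{F}_{B,j}>0$ on $[0,m_j)$. Since $\phi_j<c<\Phi_j(\tilde S_j)$, there is a unique $u_1\in(0,v)$ on the increasing branch with $\Phi_j=c$. The decreasing branch returns to $c$ exactly at $u=v$. Therefore $\kappa>0$ on $[0,u_1)$, $\kappa<0$ on $(u_1,v)$, and $\kappa(v)=0$.

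Then argue by monotonicity of $\mathcal{G}$. We have $\mathcal{G}'(\tau)=-\delta e^{-\delta\tau}\kappa(\tau)$. This is negative on $(0,u_1)$ and positive on $(u_1,v)$. So $\mathcal{G}$ strictly decreases from $\mathcal{G}(0)=0$ on $[0,u_1]$, giving $\mathcal{G}<0$ on $(0,u_1]$. On $[u_1,v]$ it strictly increases up to $\mathcal{G}(v)=0$, giving $\mathcal{G}<0$ on $[u_1,v)$. Combining the two pieces, $\mathcal{G}(\tau)<0$ on $(0,v)$, and multiplying by the positive factor $e^{\delta(t-t_0)}$ gives the claim.

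The main obstacle is the justification that the tail beyond $t_0+v$ contributes zero. This requires that after the reversal the balanced allocation on $M\cup\{j\}$ keeps $\Phi_M=\Phi_j$, which follows from Lemma \ref{lem:NCrtr_Bs_balanced_alloc} because at $t_0+v$ creator $j$ is on its decreasing branch. It also requires that this step is consistent with the policy's subsequent recursive stages. Later activations of other creators only produce equal $\Phi$'s within the growing active set, so the $M$-versus-$j$ gap stays zero. I would state this explicitly, in the same way as in the proof of Lemma \ref{lem:NCrtr_Bs_boundary_properties}(b).
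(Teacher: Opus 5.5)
Your proof is correct and is essentially the paper's argument. Both truncate the integral representation of $Y_M-Y_j$ at $t_0+v_j(c)$, with the tail vanishing because of the subsequent balancing, and both use $\mathcal{E}_j(c)=0$ for the zero value at $t_0$. Both then read off strict negativity on $(t_0,t_0+v_j(c))$ from the positive-then-negative sign pattern of $c-\Phi_j$. Your version is slightly more careful: you use both endpoint zeros, one on each monotone piece, and you state explicitly the tail assumption that the paper leaves implicit, as it also does in Lemma \ref{lem:NCrtr_Bs_boundary_properties}(b).
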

\textit{Proof. }Write $D(t):=\Phi_M(t)-\Phi_j(t)$. During the reversal phase, the old active set is unchanged, $\Phi_M(t)\equiv c$, while creator $j$ evolves under the full-traffic flow. Thus $D(t)=c-\Phi_j(\Xi_j(t-t_0;S_j))$. By construction of $v_j(c)$, the function $D(t)$ is positive at first, then negative, and finally returns to zero at $t=t_0+v_j(c)$. Define
\begin{align*}
F(t):=\int_t^{t_0+v_j(c)} e^{-\delta s}D(s)\,ds.
\end{align*}
Since $\Gamma_j(c)=0$, we have $F(t_0)=0$. Moreover, $F'(t)=-e^{-\delta t}D(t)$. Hence $F$ decreases strictly from $0$ while $D>0$, and then increases only after $D<0$. Since it returns to $0$ only at the terminal time $t_0+v_j(c)$, we must have $F(t)<0$ for all $t\in(t_0,t_0+v_j(c))$. By Lemma \ref{lem:NCrtr_Bs_Y_dynamics},
\begin{align*}
    Y_M(t)-Y_j(t)=\delta e^{\delta t}F(t)<0,
\end{align*}
which proves the claim.\qed

We now complete the verification by induction over stages. At time $0$, the active set
is $M_0=\{k\}$, and by assumption
\begin{align*}
    \phi_k-\phi_j>\eta_j\text{ for all }j\neq k.
\end{align*}
Let $\tau_j$ be the first time at which $\Phi_k(t)-\phi_j=\eta_j$. For every $t<\tau_j$, splitting the integral representation of Lemma \ref{lem:NCrtr_Bs_Y_dynamics} at $\tau_j$ and using
Lemma \ref{lem:NCrtr_Bs_boundary_properties} gives
\begin{align*}
    Y_k(t)-Y_j(t)=\delta\int_t^{\tau_j} e^{\delta(t-s)}(\Phi_k(s)-\phi_j)\,ds>0.
\end{align*}
Hence creator $k$ has the largest switching coefficient and allocating all traffic to
creator $k$ is optimal up to the first switching time $\tau_0:=\min_{j\neq k}\tau_j$.

Suppose now that at the beginning of stage $\ell$, the active set is $M_\ell$, all
creators in $M_\ell$ share the same marginal value and the same switching coefficient,
and every inactive creator $j\notin M_\ell$ satisfies
\begin{align*}
    \Phi_{M_\ell}(t_\ell)-\phi_j>\eta_j.
\end{align*}
If $|M_\ell|=1$, all traffic is allocated to the unique active creator. If $|M_\ell|\geq 2$, traffic is balanced on $M_\ell$ according to Lemma \ref{lem:NCrtr_Bs_balanced_alloc}. In either case, let $j_\ell$ be the creator that first reaches its threshold, so
\begin{align*}
    \Phi_{M_\ell}(\tau_\ell)-\phi_{j_\ell}=\eta_{j_\ell}.
\end{align*}
For every inactive $j\notin M_{\ell}$, the same argument as above yields $Y_{M_\ell}(t)-Y_j(t)>0$ for $t\in[t_\ell,\tau_\ell)$, so every active creator dominates every inactive creator before the next switching time. Thus the theorem's prescription is pointwise Hamiltonian-maximizing on $[t_{\ell},\tau_{\ell})$.

If $S_{j_\ell}\geq \tilde S_{j_\ell}$, then $\eta_{j_\ell}=0$, and
Lemma \ref{lem:NCrtr_Bs_boundary_properties}(a) gives $Y_{M_\ell}(\tau_\ell)=Y_{j_\ell}(\tau_\ell)$. Hence, creator $j_\ell$ can be admitted immediately to the active set. The balanced
allocation on $M_{\ell+1}=M_\ell\cup\{j_\ell\}$ is admissible by Lemma \ref{lem:NCrtr_Bs_balanced_alloc}, and equality of both $\Phi_i$ and $Y_i$ on
$M_{\ell+1}$ is preserved by Lemmas \ref{lem:NCrtr_Bs_balanced_alloc} and
\ref{lem:NCrtr_Bs_Y_equal_in_active_set}.

If $S_{j_\ell}<\tilde S_{j_\ell}$, then Lemma \ref{lem:NCrtr_Bs_boundary_properties}(b) gives $Y_{M_\ell}(\tau_\ell)=Y_{j_\ell}(\tau_\ell)$, while Lemma \ref{lem:NCrtr_Bs_reversal_sign} shows that, once the conditional reversal starts, $Y_{M_\ell}(t)-Y_{j_\ell}(t)<0$ for $t\in(\tau_{\ell},\sigma_{\ell})$.

Thus, creator $j_\ell$ strictly dominates the old active set throughout the reversal
interval. Moreover, any other inactive creator $h\notin M_\ell\cup\{j_\ell\}$ remains
unchanged during the reversal phase, so $\phi_h$ remains unchanged and $\Phi_{M_\ell}(t)-\phi_h>\eta_h$. Hence, $Y_{M_\ell}(t)-Y_h(t)>0$ for $t\in(\tau_{\ell},\sigma_{\ell})$, and therefore
\begin{align*}
    Y_{j_\ell}(t)-Y_h(t)=\big(Y_{j_\ell}(t)-Y_{M_\ell}(t)\big)+\big(Y_{M_\ell}(t)-Y_h(t)\big)>0.
\end{align*}
So creator $j_\ell$ dominates every other inactive creator as well. Therefore allocating all traffic to $j_\ell$ on $(\tau_\ell,\sigma_\ell)$ is pointwise Hamiltonian-maximizing. At time $\sigma_{\ell}$, by construction, $\Phi_{j_\ell}(\sigma_\ell)=\Phi_{M_\ell}(\sigma_\ell)$, and the same identity used in Lemma \ref{lem:NCrtr_Bs_boundary_properties}(b) gives $Y_{j_\ell}(\sigma_\ell)=Y_{M_\ell}(\sigma_\ell)$. Hence, creator $j_{\ell}$ may be added to the active set, and balancing on $M_{\ell+1}=M_{\ell}\cup\{j_{\ell}\}$ is again admissible and preserves equality of both $\Phi_i$ and $Y_i$ on the enlarged set.

By induction, the recursively defined policy maximizes the Hamiltonian at every stage.
Since each stage enlarges the active set by one creator, the process terminates after at
most $N-|M_0|$ enlargements. If no inactive creator ever reaches its threshold at some
stage, then the current allocation continues indefinitely. Finally, Lemmas \ref{lem:NCrtr_Bs_lambda_positive},
\ref{lem:NCrtr_Bs_PMP_sufficient}, and
\ref{lem:NCrtr_Bs_concavity} verify the sufficient conditions of the Pontryagin Maximum
Principle. Hence the proposed policy is optimal. This completes the proof of
Theorem \ref{Thm:Opt_policy_NCrtr_Bs}.

\section{Proof of Proposition \ref{Prop:NCrtr_Bs_Long_base}}
We use Theorem \ref{Thm:Opt_policy_NCrtr_Bs} and extend the arguments of Proposition \ref{Prop:2Crtr_Long_base}.

We first show that creator 1 must belong to the active set after finitely many switching steps. Suppose otherwise, then creator 1 remains inactive forever, so its follower base stays fixed at $S_1$, and its activation threshold remains $\phi_1+\eta_1$, where $\phi_1+\eta_1\geq \phi_1>\rho_1$.

If creator 1 is never activated, then there is a final active set $M^\infty$ that excludes creator 1. Let $j^\infty:=\min M^\infty$, i.e., the creator in $M^\infty$ with the highest capability. On the final stage, the optimal policy maintains equality of marginal values within $M^\infty$, so the common active marginal value is $\Phi_{M^\infty}(t)=\Phi_j(t)$ for all $j\in M^{\infty}$.

Since every creator in the final active set receives positive traffic, the same argument as in Proposition \ref{Prop:2Crtr_Long_base}(a) implies that creator $j^\infty$ must asymptotically reach its full capacity, and therefore $\Phi_{M^\infty}(t)\to \rho_{j^\infty}<\rho_1$.

But then the switching condition for creator 1 in Theorem \ref{Thm:Opt_policy_NCrtr_Bs} must eventually be satisfied, because $\phi_1+\eta_1>\rho_1>\rho_{j^\infty}$, contradicting the assumption that creator 1 is never activated. Therefore creator 1 joins the active set after finitely many stages.

Once creator 1 enters the active set, it will never leave since the active set only enlarges. After finitely many further enlargements, the process reaches its final stage. If creator 1 is the only active creator on that final stage, then it receives all traffic forever and $\mathcal{S}_1(t)\uparrow m_1$. Otherwise creator 1 belongs to a balanced active set, and by Lemma \ref{lem:NCrtr_Bs_balanced_alloc} it receives a strictly positive traffic share forever after. Therefore $\dot{\mathcal{S}}_1(t)>0$ whenever $\mathcal{S}_1(t)<m_1$ on the final stage. Since $\mathcal{S}_1(t)$ is increasing and bounded above by $m_1$, it follows that $\lim_{t\to\infty}\mathcal{S}_1(t)=m_1$.

Now we proceed to prove Part (b) of Proposition \ref{Prop:NCrtr_Bs_Long_base}. Fix $i\neq 1$, before creator $i$ is first activated, its follower base remains fixed at $S_i$. Under the recursive switching rule of Theorem \ref{Thm:Opt_policy_NCrtr_Bs}, increasing $\rho_i$ raises creator $i$'s own marginal value path and can only make its activation condition easier to satisfy. Hence, if creator $i$ receives positive traffic at some capability level $\rho_i$, then it will also receive positive traffic at any larger capability level.

We next derive the explicit upper bound. By part (a), creator 1 eventually belongs to the active set, and from then on the common active marginal value decreases to $\rho_1$. Therefore a sufficient condition for creator $i$ to be activated is $\phi_i>\rho_1$, because then also $\phi_i+\eta_i>\rho_1$, so at some finite time the common active marginal value must cross creator $i$'s activation threshold.

Now write
\begin{align*}
    \phi_i
    =
    \rho_i\left[1+\frac r\delta\left(p\rho_i+q\frac{S_i}{m_i}\right)\left(1-\frac{S_i}{m_i}\right)\right]
    =
    a_i\rho_i^2+b_i\rho_i,
\end{align*}
where
\begin{align*}
    a_i:=\frac{pr}{\delta}\left(1-\frac{S_i}{m_i}\right),
    \quad
    b_i:=1+\frac{rqS_i}{m_i\delta}\left(1-\frac{S_i}{m_i}\right).
\end{align*}
Hence $\phi_i>\rho_1$ is equivalent to $a_i\rho_i^2+b_i\rho_i-\rho_1>0$. Thus,
\begin{align*}
    \rho_i^{\min}\leq \rho_{i,\mathrm{bass}}^{\,\min}
    =
    \frac{-b_i+\sqrt{b_i^2+4a_i\rho_1}}{2a_i}.
\end{align*}
Equivalently,
\begin{align*}
    \rho_i^{\min}\leq \rho_{i,\mathrm{bass}}^{\,\min}
    =
    \frac{-1+\sqrt{\left[1+\frac{rqS_i}{m_i\delta}\left(1-\frac{S_i}{m_i}\right)\right]^2+4\rho_1\cdot\frac{pr}{\delta}\left(1-\frac{S_i}{m_i}\right)}}{2\cdot\frac{pr}{\delta}\left(1-\frac{S_i}{m_i}\right)}
    -\frac{qS_i}{2pm_i}.
\end{align*}
Next, we prove Part (c) of Proposition \ref{Prop:NCrtr_Bs_Long_base}. Fix $i\neq 1$ with $\rho_i>\rho_i^{\min}$, so creator $i$ is activated at some finite time. Once activated, creator $i$ remains in the active set forever. Since the active set enlarges only finitely many times, there is a final stage at which creator 1 and creator $i$ both belong to the active set, and the optimal allocation maintains equality of marginal values within that set, $\Phi_j(t)=\Phi_{M}(t)$ for all active $j$.

By part (a), $\mathcal{S}_1(t)\to m_1$, hence $\Phi_M(t)=\Phi_1(t)\to \rho_1$. Therefore, $\Phi_i(t)\to\rho_1$ as $t\to\infty$. Now let $\bar{S}_i$ be the unique value on $[\tilde{S}_i,m_i)$ satisfying $\Phi_i(\bar{S}_i)=\rho_1$.

Exactly as in the two-creator case, this is the larger root of
\begin{align*}
    \rho_i\left[1+\frac r\delta\left(p\rho_i+q\frac{\bar{S}_i}{m_i}\right)\left(1-\frac{\bar{S}_i}{m_i}\right)\right]=\rho_1,
\end{align*}
and we solve for
\begin{align*}
    \bar{S}_i=
    \frac{m_i}{2q}\left[-(p\rho_i-q)+\sqrt{(p\rho_i+q)^2-\frac{4q\delta}{r}\left(\frac{\rho_1}{\rho_i}-1\right)}\right].
\end{align*}
In particular, $\bar{S}_i<m_i$. Similar as in the Proof of Proposition \ref{Prop:2Crtr_Long_base}, $\mathcal{S}_i(t)\leq\bar{S}_i$ for all $t\geq 0$ and $\lim_{t\to\infty}\mathcal{S}_i(t)=\bar{S}_i=\sup_{t\geq 0}\mathcal{S}_i(t)$. The monotonicity results in Part (d) follow directly from the same formulas as in the two-creator Bass case.\qed

\end{document}